\newcommand{\rrvert}{\vert}
\newcommand{\llvert}{\vert}
\newcommand{\M}{\mathcal{M^{\hspace{-2.5mm}\circ}}\hspace{1mm}}
\newtheorem{theore}{Theorem}[section]
\newtheorem{propo}[theore]{Proposition}
\newtheorem{lema}[theore]{Lemma}
\begin{document}
\begin{frontmatter}

\title{Stochastic integration with respect to additive functionals of
zero quadratic variation}
\runtitle{Stochastic integration}

\begin{aug}
\author{\fnms{Alexander} \snm{Walsh}\corref{}\ead[label=e1]{awalshz@tx.technion.ac.il}}
\runauthor{A. Walsh} 
\address{Department of Industrial Engineering and Management,
Technion Israel Institute of Technology, Haifa, Israel. \printead{e1}}
\end{aug}

\received{\smonth{5} \syear{2012}}

%
\begin{abstract}
We consider a Markov process $X$ associated to a nonnecessarily
symmetric Dirichlet form $\mathcal{E}$. We define a stochastic
integral with
respect to a class of additive
functionals of zero quadratic variation and then we obtain an It\^{o}
formula for the process $u(X)$, when $u$ is locally in the domain of
$\mathcal{E}$.
\end{abstract}

%
\begin{keyword}
\kwd{additive functional}
\kwd{Dirichlet form}
\kwd{Fukushima decomposition}
\kwd{It\^o formula}
\kwd{Markov process}
\kwd{stochastic calculus}
\kwd{quadratic variation}
\kwd{zero energy process}
\end{keyword}

\end{frontmatter}
%

\section{Introduction and main results}

The semimartingale theory has produced a fundamental tool based on
stochastic integration and It\^{o}'s formula: the stochastic calculus.
Since Markov processes are not in general semimartingales, Fukushima
\cite{F} developed another stochastic calculus in the framework of
symmetric Dirichlet spaces. Let $E$ be a Polish space, $m$ be a Radon
measure on $E$ and $(X,\Omega,\mathcal{F}_t,\mathbf{P}_x,t\in
\mathbb{R}_+,x\in E)$ be a
$E$-valued $m$-symmetric Markov process with regular Dirichlet form
$\mathcal{E}
$. For any element $u$ of the domain $\mathcal{F}$ of $\mathcal{E}$,
the process
$(u(X_t)-u(X_0),t\geq0)$ admits the decomposition
\[
u(X_t)-u(X_0)=M^u_t+N_t^u,
\]
where $M^u$ is a martingale additive functional of finite energy and
$N^u$ is a continuous additive functional of zero energy. This
decomposition is called Fukushima's decomposition and it can be seen as
a substitute of the Doob--Meyer decomposition of super-martingales and
It\^{o}'s formula for semimartingales.
The part of the class of bounded variation processes in the
semimartingale theory is played by the class of continuous additive
functionals of zero energy. In general, these additive functionals are
not of bounded variation and therefore the Lebesgue--Stieltjes integrals
can not be defined. Nevertheless, the concepts of energy and quadratic
variation are closely related, see Graversen and Rao \cite{GR}. In
particular, it is well known that for any function $g\in L^1(E;m)$, the
process $N^u$ has 0-quadratic variation with respect to the measure
$\mathbf{P}_{g\cdot m}:=\int_E \mathbf{P}_x(\cdot)m(\mathrm{d}x)$, that is,
$u(X)$ is a
Dirichlet process in the F\"{o}llmer sense \cite{Fo}. Is shown in
\cite{Fo} (see also \cite{Fo2}) that for any function $\varphi$ in
$\mathcal{C}^1(\mathbb{R})$, the following limit exists $\mathbf
{P}_x$-a.s.  for $m$-a.e. $x
\in E$:
%
\begin{equation}
\label{Fint} \int_0^t \varphi
\bigl(u(X_s)\bigr)\,\mathrm{d}N^u_s:=
\lim_{n\rightarrow\infty
}\sum_{i=0}^{n-1} \varphi
\bigl(u(X_{ti/n})\bigr) \bigl(N^u_{t(i+1)/n}-N^u_{ti/n}
\bigr).
\end{equation}
In his It\^o formula expending $u(X)$ \cite{Fo,Fo2}, this
integral replaces the Lebesgue--Stieltjes integral in the classical It\^
{o} formula for semimartingales. In this connection, Russo and Vallois
\cite{RV} and \cite{RV2} have obtained a similar stochastic calculus
through a so-called regularization procedure.

However, in the theory of symmetric Markov processes has been necessary
to extend (\ref{Fint}) to more general integrands than whose of type
$\varphi\circ u$, for $\varphi\in C^1(\mathbb{R})$. For example, in
order to
define stochastic line integrals of 1-forms along the paths of
symmetric diffusion processes on manifolds, Nakao \cite{N} introduced
an integral $\int_0^t f(X_s)\,\mathrm{d}N^u_s$ for $f$ bounded function
element of~$\mathcal{F}$. His new integral is an additive functional
in the
sense of Fukushima \textit{et al.} \cite{FOT}, and therefore it is defined
$\mathbf{P}_x$-a.s.  for q.e. (quasi every) $x \in E$ (i.e., outside
of an
exceptional set \cite{FOT}). This allows to get a refinement of the F\"
{o}llmer's It\^{o} formula, for which, the expansion of $u(X)$ holds
$\mathbf{P}_x$-a.s.  for q.e. $x \in E$. Besides, this integral is used by
Fitzsimmons and Kuwae \cite{FiK}, to study the lower order
perturbation of diffusion processes.\looseness=-1

We emphasize that following \cite{Fo}, for $f$ and $u$ in $\mathcal
{E}$ it is
possible to show the existence of the limit $\lim_{n\rightarrow\infty
}\sum_{i=0}^{n-1}
[f(X_{ti/n})(N^u_{t(i+1)/n}-N^u_{ti/n})+u(X_{ti/n})(N^f_{t(i+1)/n}-N^f_{ti/n})].$
$\mathbf{P}_x$-a.e. for $m$-a.e. $x\in E$, however it is not possible
to show
the existence of the limits
$\lim_{n\rightarrow\infty}\sum_{i=0}^{n-1}
f(X_{ti/n})(N^u_{t(i+1)/n}-N^u_{ti/n})$ and $\lim_{n\rightarrow\infty
}\sum_{i=0}^{n-1} u(X_{ti/n})(N^f_{t(i+1)/n}-N^f_{ti/n})$ and
therefore to define the integrals $\int_0^t f(X_s)\,\mathrm{d}N^u_s$
and $\int_0^t u(X_s)\,\mathrm{d}N^f_s$ in separated way.

The conditions of existence of Nakao's integral being too restrictive,
this notion could not be used by Chen \textit{et al.} \cite{CFKZ2} to study the
lower order perturbation of symmetric Markov processes that are not
diffusions. Chen \textit{et al.} \cite{CFKZ}, have extended Nakao's integral to
a larger class of integrators as well as integrands. Using time
reversal, they have defined an integral $\int_0^t f(X_s)\,\mathrm
{d}C_s$ for
$f$ in $\mathcal{F}_{\mathit{loc}}$, the set of functions locally in $\mathcal
{F}$ and $C$ in a
large class of processes containing $\{N^u\dvt u\in\mathcal{F}\}$. The process
$C$ is not in general of zero energy but of zero quadratic variation
and the integral is not an additive functional or a local additive
functional but a local additive functional admitting null set. Kuwae
\cite{Ku3} gives a refinement of Chen \textit{et al.} work, redefining the
stochastic integral without using time reversal.

Our aim in this paper, is to construct an integral $\int_0^t
f(X_s)\,\mathrm{d}
C_s$ for
a Markov process $X$ associated to a nonnecessarily symmetric regular
Dirichlet form
$(\mathcal{E},\mathcal{F})$ in a Hilbert space $L^2(E,m)$, $f$
locally in $\mathcal{F}$ and $C$
local continuous additive functional with zero quadratic variation. To
do so, one can not extend the construction of Chen \textit{et al.} neither
Kuwae's construction because they both heavily rely on the symmetry of
the Markov process.

On one hand, it is legitimate to solve this question since many results
for symmetric Dirichlet forms hold for nonsymmetric Dirichlet forms,
see, for example, \cite{K,Ku2,Ku,MaR} and~\cite
{O}. In particular, Fukushima's decomposition holds for nonsymmetric
regular Dirichlet forms, but also the correspondence between Markov
processes and (nonnecessarily symmetric) Dirichlet forms, Revuz
correspondence and other relations between probabilistic notions for
$X$ and analytic notions for $\mathcal{E}$.

In order to introduce our stochastic integral, we need the following
definitions: A~sequence $(\Pi_n:=\{
0=t_{n,0}<t_{n,1}<\cdots<t_{p_n,n}<\infty\})_{n\in\mathbb{N}}$ of
partitions of
$\mathbb{R}_+$ is said to tend to the identity if $\|\Pi_n\|:=\max\{
t_{n,k+1}-t_{n,k}\}\rightarrow0\mbox{ as }n\rightarrow\infty\mbox{
and }t_{p_n,n}\rightarrow\infty$. We denote by $\mathcal{N}$ the set of
continuous additive functionals of finite energy and of zero quadratic
variation. Denote by $\mathcal{N}_{f\mbox{-}\mathit{loc}}$ the set of process locally
in $\mathcal{N}$. (See Definitions \ref{flocal} and \ref{defafzqv}
below.)

\begin{theore}\label{integralestocastica}
For a function $f$ locally in $\mathcal{F}$ and an element $C$ of
$\mathcal{N}_{f\mbox
{-}\mathit{loc}}$, there exists an unique local additive functional $I$ such that:

For any sequence $(\Pi_n)$ of partitions tending to the identity,
there exists a subsequence $(\Pi_{n_k})$\vspace*{-2pt} such that $\mathbf
{P}_x$-a.e. for
$m$-a.e. $x$ in E:
$\sum_{i=0}^{p_{n_k}-1}f(X_{t_{n_k,i}})[C(t\wedge
t_{n_k,i+1})-C(t\wedge t_{n_k,1})]$
converges to\vspace*{1pt} $I_t$ uniformly on any compact of $[0,\zeta)$.
Moreover, $I$ belongs to $\mathcal{N}_{f\mbox{-}\mathit{loc}}$.
\end{theore}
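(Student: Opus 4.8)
The plan is to descend from the local setting to a global one, linearise the Riemann sums by a discrete integration by parts, and then treat separately the martingale and the zero-energy contributions coming from the Fukushima decomposition of $f(X)$. First I would localise: by the definitions of $\F_{loc}$ and of $\Nc_{f\mbox{-}loc}$ (Definitions \ref{flocal} and \ref{defafzqv}) there is an increasing sequence of stopping times $\tau_k\uparrow\zeta$, attached to a nest, together with bounded functions $f_k\in\F$ and functionals $C^k\in\Nc$ with $f=f_k$ and $C=C^k$ up to time $\tau_k$. It then suffices to build the integral when $f\in\F$ is bounded and $C\in\Nc$, and to check that these globally defined integrals are compatible along the localising sequence so that they patch into a single local additive functional $I$. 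Uniqueness follows from the defining property: if $I$ and $I'$ both satisfy it, then for a given $(\Pi_n)$ one extracts a subsequence realising $I$ and a further subsequence realising $I'$, and along the common subsequence the same sums converge to both, forcing $I=I'$.

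In the global case, writing $S_n(t)=\sum_i f(X_{t_{n,i}})[C_{t\wedge t_{n,i+1}}-C_{t\wedge t_{n,i}}]$ and using $C_0=0$, Abel summation gives $S_n(t)=f(X_t)C_t-\sum_i C_{t\wedge t_{n,i+1}}[f(X_{t\wedge t_{n,i+1}})-f(X_{t\wedge t_{n,i}})]$ for $n$ large enough on a fixed compact. Inserting the Fukushima decomposition $f(X_s)-f(X_0)=M^f_s+N^f_s$, valid for the non-symmetric regular form by the results quoted in the Introduction, splits the second sum into a martingale part $\sum_i C_{t_{n,i+1}}\Delta_i M^f$ and a zero-energy part $\sum_i C_{t_{n,i+1}}\Delta_i N^f$. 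For the martingale part I would invoke the standard $L^2(P_m)$ theory: since $M^f$ has finite energy, $\langle M^f\rangle$ is a Revuz-integrable positive continuous additive functional and $\int_0^t C_s\,dM^f_s$ is well defined; the right-endpoint sums converge to it, differing from the It� sums by $\sum_i\Delta_i C\,\Delta_i M^f$, which tends to the covariation $[C,M^f]$, and $[C,M^f]=0$ by Kunita--Watanabe and the vanishing quadratic variation of $C$.

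The core of the argument is the zero-energy cross term $\sum_i C_{t_{n,i+1}}\Delta_i N^f$, the integral of one additive functional of zero quadratic variation against another. This is exactly where the constructions of Chen et al. and of Kuwae used time reversal, which is unavailable here. I would instead establish an $L^2(P_m)$ Cauchy estimate at the level of energies. Using the additive property and stationarity under $P_m$ one has $\Es_m[(\Delta_i N^f)^2]=\Es_m[(N^f_{h_i})^2]=o(h_i)$ because $N^f$ has zero energy, whence $\Es_m[\sum_i(\Delta_i N^f)^2]\to 0$; likewise $\Es_m[\sum_i(\Delta_i C)^2]\to 0$ since $C$ has zero quadratic variation. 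Comparing the sums for two partitions through their common refinement and regrouping the increments of $f(X)$, the difference is controlled, via the Schwarz inequality and the sector condition relating $\E$ to its symmetric part $\tilde\E$, by the product of the square roots of these two vanishing quantities, giving convergence in $L^2(P_m)$. A Doob-type maximal inequality together with Borel--Cantelli then upgrades this, along a subsequence, to $P_x$-a.s. convergence uniform on compacts of $[0,\zeta)$ for $m$-a.e.\ $x$. The delicate point, and the main obstacle, is precisely this estimate: zero quadratic variation alone does not make such cross sums converge, and in the non-symmetric case the energy inequalities must be routed through $\tilde\E$ and the sector condition rather than through the dual (time-reversed) process.

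Finally I would identify the limit $I_t=f(X_t)C_t-\int_0^t C_s\,dM^f_s-\lim_n\sum_i C_{t_{n,i+1}}\Delta_i N^f$, check continuity and additivity, inherited from $C$, $M^f$ and $N^f$ through the uniform convergence, and verify $I\in\Nc_{f\mbox{-}loc}$. Here the zero quadratic variation of $I$ is cheap once existence is known: its increments satisfy $\Delta I\approx f(X)\,\Delta C$, so $\sum(\Delta I)^2\le\|f\|_\infty^2\sum(\Delta C)^2\to 0$, while the energy bounds produced in the previous step give local finiteness of the energy. Re-localising then delivers the statement for general $f\in\F_{loc}$ and $C\in\Nc_{f\mbox{-}loc}$, and the subsequence assertion is built into the construction.
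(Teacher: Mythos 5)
Your reduction to bounded $f\in\F$ and $C\in\Nc$, the Abel summation, and the treatment of the martingale part $\sum_i C_{t_{n,i+1}}\Delta_i M^f$ are all sound, but the heart of the proof is missing, and the estimate you sketch for it cannot work. After summation by parts you must prove convergence of $\sum_i C_{t_{n,i+1}}\,\Delta_i N^f$, i.e.\ you must integrate the \emph{level} of one zero-quadratic-variation process against the \emph{increments} of a zero-energy process. Comparing the sums along a partition $\Pi$ (mesh $h$) and a refinement $\Pi''$ (mesh $h'$) gives a difference $\sum_j\bigl(C_{t_{i(j)+1}}-C_{s_{j+1}}\bigr)\Delta_j N^f$, where $t_{i(j)+1}$ is the right endpoint of the $\Pi$-interval containing $[s_j,s_{j+1}]$. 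After Cauchy--Schwarz, the factor $\sum_j\bigl(C_{t_{i(j)+1}}-C_{s_{j+1}}\bigr)^2$ is a sum of \emph{overlapping} increments of $C$ (as many terms per $\Pi$-interval as there are refinement points inside it); it is not a quadratic-variation sum, and it is not small: since $\Es_m[(C_{t+s}-C_t)^2]=O(s)$ for $s\le h$, its expectation is of order $(T/h)\cdot(h/h')\cdot O(h)=T\,O(h)/h'$, which blows up as $h'\to0$. So the product you invoke is (vanishing factor)$\times$(exploding factor), hence indeterminate, and zero quadratic variation alone cannot close the argument — as you yourself concede. Naming "the sector condition" does not supply the missing idea: the sector condition compares $\E$ and $\tilde{\E}$ as bilinear forms and gives no control of such pathwise or $L^2(P_m)$ cross sums. (A smaller point: your uniqueness argument only yields $I=I'$ for $m$-a.e.\ starting point; upgrading to q.e.\ $x$, as the paper notes, needs the refinement argument of Proposition 4.6 of \cite{CFKZ}.)

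The paper's proof is built precisely to avoid the term your decomposition creates, and it does so by refuting your premise that time reversal is unavailable in the non-symmetric setting. It is available: under $P_m$, the path reversed at a fixed time $T$ is governed by the \emph{dual} process $\hat{X}$ (Lemma \ref{lema012111}), and this yields a non-symmetric Lyons--Zheng identity (\ref{012511}): with $v=u^*$ defined by $\E_1(u,h)=\tilde{\E}_1(u^*,h)$,
$$N^u_t=-\tfrac{1}{2}\bigl(M^v_t+\hat{M}^v_t\circ r_T\bigr)+\tfrac{1}{2}\bigl(v(X_t)-v(X_{t-})\bigr)+\int_0^t[u(X_s)-v(X_s)]\,\di s,\qquad t\le T .$$
Crucially this decomposition is applied to the \emph{integrator} $C$ (through its local representation $N^{u_\ell}-\int_0^\cdot u_\ell(X_s)\di s$ of Theorem \ref{teore1.1walsh}), not to the integrand. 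The Riemann sums then split into level-times-increment sums against the forward martingale $M^{v_\ell}$ and the backward dual martingale $\hat{M}^{v_\ell}\circ r_T$ — handled by Doob's inequality and Borel--Cantelli under $P_m$ and $\hat{P}_m$ — plus increment-times-increment cross terms (increments of $f_\ell(X)$ against increments of $v_\ell(X)$ or of $\hat{N}^{\ell}$), which are genuine QV cross terms killed by Lemma \ref{lemavqz}. In other words, zero-energy processes are only ever multiplied by increments, never by levels; your Abel-summation route produces exactly the forbidden level-weighted sum. Note finally that the paper does not construct $I$ as a Cauchy limit of Riemann sums at all: $I=f*C$ is built beforehand in Section \ref{sectionsi} via the $\Theta$-characterization and the decomposition of $N^u$ into diffusion, jump and killing parts, and Theorem \ref{integralestocastica} only identifies the limit of the sums with this object.
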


The local additive functional obtained in the theorem below is denoted
by $f*C_t$ or by $\int_0^t f(X_s)\,\mathrm{d}C_s$. Then when $C$ is of bounded
variation, $f*C$ coincides with the Lebesgue--Stieltjes integral. If
there exists a local martingale $M$, a real function $u$ on $E$ and a
$C^1$-real function $\varphi$ such that $f=\varphi\circ u$ and
$u(X)=M+C$, $\mathbf{P}_x$-a.e. for $m$-a.e. then $f*C$ coincides with
the F\"
{o}llmer integral.

This new integral leads to an It\^o formula for $u(X)$ when $u$ belongs
to $\mathcal{F}_{\mathit{loc}}$. On this purpose, we need first an extension of the
Fukushima decomposition of $u(X)$ for the elements $u$ locally in
$\mathcal{F}
$. This extension is well-known for diffusions processes.

When $X$ is not a diffusion we have the following substitute of the
Fukushima decomposition: Denote by $\M_{f\mbox{-}\mathit{loc}}$ the set of
local martingale additive functionals locally of finite energy.

\begin{propo}\label{itoLevy}
For $u$ in $\mathcal{F}_{\mathit{loc}}$, the process $u(X)$ admits the following
decomposition $\mathbf{P}_x$-a.e. for q.e. $x\in E$:
\[
u(X_t)=u(X_0)+V_t^u+W^{u}_t+C^u_t,
\qquad  t<\zeta \ (t<\infty\mbox{ if }u\in\mathcal{F}),
\]
where $W^u\in\M_{f\mbox{-}\mathit{loc}}$, $C^u\in\mathcal{N}_{c,f\mbox
{-}\mathit{loc}}$ and
$V^u$ is the AF of bounded variation given by:
\[
V_t^u=\sum_{s\leq t}
\bigl(u(X_s)-u(X_{s-})\bigr)1_{\{|u(X_s)-u(X_{s-})|>1\}
}-u(X_{\zeta-})1_{\{t\geq\zeta\}}.
\]
Moreover, the jumps of $W^u$ are bounded by $1$.
\end{propo}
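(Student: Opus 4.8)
The plan is to deduce the statement from the Fukushima decomposition already available in the non-symmetric regular case, by performing a Lévy--Itô type truncation of the jumps of the martingale part at the level $1$, and then to propagate the construction to $\F_{loc}$ by localization. First I would dispose of the case $u\in\F$. Here the non-symmetric Fukushima decomposition gives $u(X_t)-u(X_0)=M_t^u+N_t^u$, with $M^u$ a martingale additive functional of finite energy and $N^u$ a continuous additive functional of zero energy. As $N^u$ is continuous, the jumps of $u(X)$ and of $M^u$ coincide, $u(X_s)-u(X_{s-})=\Delta M_s^u$ for $s<\zeta$. I then isolate the large jumps by setting $J_t:=\sum_{s\le t}(u(X_s)-u(X_{s-}))1_{\{|u(X_s)-u(X_{s-})|>1\}}$ and, through the Lévy system of $X$, its predictable dual projection $\widetilde J$; quasi-left-continuity of the Hunt process $X$ ensures that $\widetilde J$ is a \emph{continuous} additive functional, while both $J$ and $\widetilde J$ are of bounded variation on every compact subinterval of $[0,\zeta)$, the number of jumps larger than $1$ being locally finite.

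I would then split $M^u=W^u+(J-\widetilde J)$, i.e. set $W_t^u:=M_t^u-(J_t-\widetilde J_t)$. Since $J-\widetilde J$ is a purely discontinuous martingale additive functional carrying exactly the large jumps of $M^u$, the remainder $W^u$ is a martingale additive functional whose jumps satisfy $|\Delta W^u|\le 1$, and it is (locally) of finite energy, so $W^u\in\M_{f\mbox{-}loc}$. Rewriting the decomposition as $u(X_t)-u(X_0)=W_t^u+J_t+(N_t^u-\widetilde J_t)$, I put $C^u:=N^u-\widetilde J$: being the difference of the zero-energy continuous functional $N^u$ and the continuous bounded-variation functional $\widetilde J$, both of zero quadratic variation, $C^u$ is a continuous additive functional of zero quadratic variation, i.e. $C^u\in\Nc_{c,f\mbox{-}loc}$. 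The part $V^u$ is then $J$ corrected at the lifetime; quasi-left-continuity shows that the only possible predictable jump of $u(X)$ occurs at $\zeta$, which yields precisely the terminal term $-u(X_{\zeta-})1_{\{t\ge\zeta\}}$ in the stated expression for $V^u$.

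It remains to pass from $\F$ to $\F_{loc}$. By the definition of $\F_{loc}$ there are an increasing sequence of open sets $(G_k)$ with $\bigcup_kG_k=E$ q.e. and functions $u_k\in\F$ with $u=u_k$ $m$-a.e. on $G_k$; writing $\tau_k$ for the exit time of $X$ from $G_k$ one has $\tau_k\uparrow\zeta$ and $u(X)=u_k(X)$ on $[0,\tau_k)$. The decompositions obtained above for the $u_k$ agree on the overlaps $[0,\tau_k\wedge\tau_{k+1})$, by uniqueness of the Fukushima decomposition, so they glue into local additive functionals $W^u$, $C^u$, $V^u$ on $[0,\zeta)$ with the asserted membership and jump bound. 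The main obstacle, and where most of the work concentrates, is the treatment of the lifetime: proving that the compensator $\widetilde J$ is continuous and that the sole predictable jump is the killing jump at $\zeta$ rests on the quasi-left-continuity and the explicit Lévy system of the possibly non-symmetric process $X$, and one must check that this truncation at $\zeta$ is compatible with the localization along $(\tau_k)$.
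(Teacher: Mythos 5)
Your treatment of the fixed case $u\in\F$ is essentially sound and is close in spirit to the paper's own construction: the paper also performs a L\'evy--It\^o truncation at level $1$ through the L\'evy system $(N,H)$, except that it compensates the \emph{small} jumps (defining $M^{bj}$ as the limit over $\varepsilon_n\downarrow 0$ of the sums of jumps with $\varepsilon_n<|\Delta u(X_s)|<1$ minus their L\'evy-system compensators, quoting \cite{W2} for convergence in $\M_{f\mbox{-}loc}$), whereas you compensate the \emph{large} ones; for a single $u\in\F$ the two routes are equivalent.

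The genuine gap is the localization step. You assert that the decompositions built from $u_k$ and $u_{k+1}$ ``agree on $[0,\tau_k\wedge\tau_{k+1})$ by uniqueness of the Fukushima decomposition,'' and this is false when $X$ has jumps --- it is exactly the failure that Proposition \ref{itoLevy} is designed to circumvent (the paper notes the naive extension is known only for diffusions). The compensator $\widetilde J^k_t=\int_0^t\int_E 1_{\{|u_k(y)-u_k(X_s)|>1\}}(u_k(y)-u_k(X_s))N(X_s,\di y)\di H_s$ integrates over \emph{all} jump destinations $y\in E$, including points outside $G_k$ where $u_k\neq u_{k+1}$; hence $\widetilde J^k\neq\widetilde J^{k+1}$ on $[0,\tau_k)$, and for the same reason $M^{u_k}\neq M^{u_{k+1}}$ and $N^{u_k}\neq N^{u_{k+1}}$ there (only their sums agree, both being $u(X_t)-u(X_0)$). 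Uniqueness of the Fukushima decomposition is a global statement about one function $u\in\F$ on $[0,\infty)$; it gives no identification of pieces restricted to stochastic intervals. Consequently your $W^{u_k}$ and $C^{u_k}$ do not glue: on the overlap one has $C^{u_k}-C^{u_{k+1}}=-(W^{u_k}-W^{u_{k+1}})$, which is nonzero in general. The paper avoids this by building the jump martingale $M^{bj}$ directly from $u$ itself --- which is a genuine function defined q.e.\ on all of $E$, so its jump sums and compensators require no gluing --- localizing only the continuous parts $M^{u_n,c}$ (which do localize, being continuous), defining $C^u$ as the remainder, and then \emph{proving} $C^u\in\Nc_{c,f\mbox{-}loc}$ by comparing it with $N^{u_\ell}$ on $[0,\tau_{G_\ell})$: the discrepancy is an explicit L\'evy-system integral involving the cutoff functions $g_\ell$, shown to be a finite CAF of bounded variation by energy estimates. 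That comparison is the real content of the proof and has no counterpart in your argument; by contrast, the lifetime term you single out as the main obstacle is a minor bookkeeping issue once the interior jumps are settled.
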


In particular, if $E=\mathbb{R}^d$ and we take $u$ the coordinate function
$\pi_i\dvtx x\rightarrow x_i$, $i=1,\ldots ,d$, the above result can be seen as
a generalization of the It\^{o}--L\'{e}vy decomposition for L\'{e}vy
processes (e.g., Sato \cite{Sa}).

Using the notation of Proposition \ref{itoLevy}, we introduce the
following extension of the It\^{o} formula.

\begin{propo}\label{ito}
Suppose that $\Phi$ belongs to $\mathcal{C}^2(\mathbb{R}^d)$ and
$u=(u_1,\ldots ,u_d)$
belongs to $\mathcal{F}^d_{\mathit{loc}}$. Then for q.e. $x\in E$, $\mathbf
{P}_x$-a.s.  for all
$t\in[0,\zeta)$ ($[0,\infty)$ if $u\in\mathcal{F}^d$):
%
\begin{eqnarray}\label {fito}
&&\Phi\bigl(u(X_t)\bigr)-\Phi\bigl(u(X_0)\bigr)\nonumber
\\
&&\quad =\sum
_{k=1}^d\int_0^t
\frac{\partial
\Phi}{\partial x_k}\bigl(u(X_{s-})\bigr)\,\mathrm{d}W_s^{u_k}+
\sum_{k=1}^d\int_0^t
\frac{\partial\Phi}{\partial x_k}\bigl(u(X_{s})\bigr)\,\mathrm{d} C_s^{u_k}
\nonumber
\\
&&\qquad {}+\frac{1}{2}\sum_{k,\ell=1}^d\int
_0^t\frac{\partial^2 \Phi
}{\partial x_k\,\partial x_{\ell}}\bigl(u(X_{s})
\bigr)\,\mathrm{d}\bigl\langle W^{u_k,c},W^{u_{\ell},c}\bigr
\rangle_s
\\
&&\qquad {}+\sum_{s\leq t} \Biggl[ \Phi\bigl(u(X_s)
\bigr)-\Phi\bigl(u(X_{s-})\bigr)\nonumber
\\
&&\qquad \hspace*{31pt}{}-\sum_{k=1}^d
\frac{\partial\Phi}{\partial
x_k}\bigl(u(X_{s-})\bigr)\Delta u_k(X_s)1_{\{|\Delta(u_k(X_s))|<1\}}
\Biggr]
\nonumber
\\
&&\qquad {}-\sum_{k=1}^d \frac{\partial\Phi}{\partial x_k}
\bigl(u(X_{\zeta
-})\bigr)u(X_{\zeta-})1_{\{t\geq\zeta\}}.
\nonumber
\end{eqnarray}
\end{propo}

In the case that $E=\mathbb{R}^d$, if we take $u=(\pi_1,\ldots,\pi_d)$, we
obtain a It\^{o} formula for the process $X$ and therefore the
Fukushima decomposition of $\Phi(X)$ for $\Phi\in C_0^2(\mathbb{R}^d)$.
Following Albeverio \textit{et al.} \cite{AFRS}, the Dirichlet form $\mathcal
{E}(\Phi
,\Psi)$ for $\Phi$ and $\Psi$ in $\mathcal{F}$ can be approximate
by $\frac
{1}{t}\int_E \mathbf{E}_x(\Phi(X_0)-\Phi(X_t))\Psi(x)m(\mathrm
{d}x)$, then we
hope that the It\^{o} formula can be used, for example, in order to
give a probabilistic approach to the work of Hu \textit{et al.} \cite{HMS,HMS2} concerning Beurling--Deny decomposition for nonsymmetric
Dirichlet forms.

As mentioned above, following \cite{Fo}, it is possible to define the
second term on the right-hand side of (\ref{fito}) as the limit
$\mathbf{P}_x$-a.s.  for $m$-a.e. $x\in E$:
%
\begin{equation}\label{Fintm}
\lim_{n\rightarrow\infty} \sum_{i=0}^{n-1}\sum
_{k=1}^d\int_0^t
\frac{\partial\Phi}{\partial x_k}\bigl(u(X_{ti/n})\bigr) \bigl[C_{t(i+1)/n}^{u_k}-C_{ti/n}^{u_k}
\bigr]
\end{equation}
and to get (\ref{fito}), $\mathbf{P}_x$-a.s.  for $m$-a.e. $x\in E$. If we
want to use the results of \cite{Fo} to ensure the existence of the
above limit and (\ref{fito}), $\mathbf{P}_x$-a.s.  for q.e. $x\in E$,
we need
to show that the process $N^u$ is of zero quadratic variation, and then
$u(X)$ is a Dirichlet process in the F\"{o}llmer sense, $\mathbf{P}_x$-a.s.
for q.e. $x\in E$. This has been demonstrated by many authors for
certain class of processes and functions $u$. For example, this has
been shown by Bouleau and Yor \cite{BY} for the case when $X$ is a
unidimensional semimartingale with discontinuous part of bounded
variation and $u$ an absolutely continuous function with bounded weak
derivative. Also it has been shown for F\"{o}llmer and Protter \cite
{FP} for a multidimensional Brownian Motion and a function $u$ locally
in the Sobolev space $\mathcal{W}^{1,2}$. There are even, similar
results for a time dependent function, maybe the first one shown by F\"
{o}llmer \textit{et al.} \cite{FPS} for the case of a unidimensional Brownian
Motion and a time dependent function $u$ with locally square integrable
weak derivatives satisfying a mild condition of continuity. The results
of \cite{FPS} have been extended, for example, by Bardina and Jolis
\cite{BJ}, Bardina and Rovira \cite{BR} for elliptic diffusion
processes, by Ghomrasni and Peskir \cite{GP} for continuous
semimartingales, by Eisenbaum \cite{E,E2}, Eisenbaum and
Walsh \cite{EW} for L\'{e}vy processes with Brownian component, by
Eisenbaum and Kyprianou \cite{EK} and Walsh \cite{W1} for L\'{e}vy
processes without Brownian component. The precedent list is not
exhaustive, in fact in the references of these cited papers, we can
find more examples of stochastic processes $X$ and functions $u$,
time-dependent or not, for which $u(X\cdot,\cdot)$ is a Dirichlet process in
the F\"{o}llmer sense and therefore, Proposition~\ref{ito} is already known.

The integrals used in (\ref{fito}) are based on the Fukushima
stochastic calculus using the concept of additive functional, the
additive property is then essential. In this context, is not possible
to extend Proposition~\ref{ito} to time-dependent functions $u$.

The paper is organized as follows. In Section \ref
{preliminares}, we present some preliminaries. In Section~\ref
{sectionsi}, we construct a stochastic integration with respect to
$N^u$. To do so, we first establish a decomposition of $N^u$ as the sum
of three processes $N^u_1$, $N^u_2$ and $N^u_3$ such that $N^u_1$ and
$N^u_2$ are respectively associated to the diffusion part and the
jumping part of the symmetric part of $\mathcal{E}$, and $N^u_3$ is of bounded
variations. Next, we present respectively stochastic integration with
respect to $N^u_1$ and~$N^u_2$. These results lead to an integral with
respect to $N^u$ which is used with an argument of localization to
introduce the stochastic integral with respect to $C$ in $\mathcal{N}_{c,f\mbox{-}\mathit{loc}}$. In Section \ref{prueba1}, we prove Theorem \ref
{integralestocastica}, that is, the stochastic integral with respect to
$C$ can be approximated by Riemman sums. We also show that when the
Dirichlet form is symmetric, the obtained stochastic integral with
respect to $C$ coincides with the integral defined by Chen \textit{et al.} \cite
{CFKZ}. In Section \ref{prueba2}, we establish Proposition \ref
{itoLevy} and the It\^{o} formula in which this new integral takes the
place of the Lebesgue--Stieltjes integral in the classical It\^{o}
formula for semimartingales.

\section{Preliminaries}\label{preliminares}

In this paper, we use mostly notation and vocabulary from the book of
Fukushima \textit{et al.} \cite{FOT} still available in the nonnecessarily
symmetric case (see Ma and Rockner \cite{MaR} and Oshima \cite{O}).
This section contains existing results or some immediate consequences
of existing results that will be useful for the other sections.

Throughout this paper, we assume that $X=(\Omega,\{\mathcal{F}_t\}_{t\geq0},\{
X_t\}_{t\geq0}, \{\mathbf{P}_z\}_{z\in E})$ is a Hunt process on a locally
compact separable metric space $E$, properly associated to a regular
Dirichlet form $\mathcal{E}$ with domain $\mathcal{F}$ in a Hilbert
space $L^2(E;m)$.
We do not assume that $\mathcal{E}$ is symmetric. Set $\mathcal
{E}_1(u,v):=\mathcal{E}
(u,v)+(u,v)$, where $(\cdot,\cdot)$ denotes the inner product in $L^2(E,m)$. It
is known that $\mathcal{F}$ is a Hilbert space with inner product
$\tilde{\mathcal{E}
}_1(u,v):=\frac{1}{2}(\mathcal{E}_1(u,v)+\mathcal{E}_1(v,u))$.
Denote by $\zeta$ the
life time of $X$ and $\partial$ the extra point such that $X_t(\omega
)=\partial$ for all $t\geq\zeta(\omega)$ and $\omega\in\Omega$.
A real function on $E$ is extended to a function on $E\cup\partial$
by setting $f(\partial)=0$.

The energy of an AF (additive functional) $A$ is defined by
\[
e(A):=\lim_{t\rightarrow0}\frac{1}{2t}\mathbf{E}_m
\bigl[A_t^2 \bigr]
\]
when the limit exists and for two AF $A,B$, their mutual energy is
defined by
\[
e(A,B):=\tfrac{1}{2}\bigl[e(A+B)-e(A)-e(B)\bigr].
\]
An AF $M$ is called a martingale additive functional (abbreviated as
MAF) if it is finite, c\`{a}dl\`{a}g and for q.e. $x$ in $E$: $\mathbf{E}_x[M_t^2]<\infty$ and $\mathbf{E}_x[M_t]=0$ for any $t\geq0$. Denote
by $\M
$ the set of MAF's of finite energy and
\[
\mathcal{N}_c:= \bigl\{N\dvt
 N\mbox{ is a finite continuous AF}, e(A)=0,
\mathbf{E}_x(|N_t|)<\infty\mbox{ q.e. for
each }t>0\bigr\}.
\]

For any $u\in\mathcal{F}$, $M^u$ and $N^u$ denote the elements of $\M
$ and
$\mathcal{N}_c$, respectively, that are present in Fukushima
decomposition of
$u(X_t)-u(X_0), t\geq0$, that is:
\[
u(X_t)-u(X_0)=M_t^u+N_t^u
\qquad \mbox{for }t\geq0, \mathbf {P}_x\mbox{-a.e. for q.e. }x\in E.
\]

In this paper, we always assume that the elements of $\mathcal{F}$ are always
represented by its quasi-continuous $m$-versions.

For a nearly Borel set $B(\subset E)$, $\sigma_B$ and $\tau_B$
represent the first hitting time to $B$ and the first exit time from
$B$ respectively, that is:
\begin{eqnarray*}
\sigma_B&:=&\inf\{t>0\dvt X_{t}\in B\},
\nonumber
\\
\tau_B&:=&\inf\{t>0\dvt X_{t}\notin B\}.
\nonumber
\end{eqnarray*}

We denote by $\mathcal{F}_b$ the subset of $\mathcal{F}$ of bounded
functions and for a
nearly Borel finely open set, $\mathcal{F}_G$ the set of functions
$u\in\mathcal{F}$
such that $u(x)=0$ for q.e. $x\in E\setminus G$. The subset of
$\mathcal{F}_G$
of bounded functions is denoted by $\mathcal{F}_{b,G}$. The
abbreviations CAF
and PCAF stand for continuous additive functionals and positive
continuous additive functional, respectively. The Revuz measure of a
PCAF is the measure given by the Revuz correspondence between PCAFs and
smooth measures. All these definitions are found in \cite{FOT}.

The following theorem is a small modification of Theorem 5.4.2 of \cite
{FOT} established for the symmetric case, but it holds also for the
nonsymmetric case. (See \cite{O} and \cite{Wt}.)

\begin{theore}\label{T5.4.2FOT}
Let $u$ be an element of $\mathcal{F}$ and let $G$ be a nearly Borel finely
open set. Let $A^1$ and $A^2$ be two PCAFs with Revuz measure $\mu_1$
and $\mu_2$, respectively, such that $\mathcal{F}_{b,G}\subset
L^1(E,\mu_i)$
for $i=1,2$. Then $\mathbf{P}_x(N_t^u=A^1_t-A^2_t\mbox{ for }t<\tau_G)=1$
for q.e. $x\in E$, if and only if:
\[
\mathcal{E}(u,h)=\langle \mu_2-\mu_1,h\rangle \qquad  \forall h\in
\mathcal{F}_{b,G}.
\]
\end{theore}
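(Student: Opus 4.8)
The plan is to base the whole argument on a ``bridge formula'' relating the form $\E$ to the zero-energy part $N^u$, and to read off both implications from it. Starting from the approximation of $\E$ quoted in the introduction, which stays valid in the non-symmetric case (see \cite{MaR}), I would write, for $u\in\F$ and $h\in\F_b$, $\E(u,h)=\lim_{t\downarrow0}\frac1t\Es_m[(u(X_0)-u(X_t))h(X_0)]$ with $\Es_m=\int_E\Es_x\,m(\di x)$. Substituting Fukushima's decomposition $u(X_t)-u(X_0)=M^u_t+N^u_t$ and using that $M^u$ is a MAF, so $\Es_x[M^u_t]=0$ q.e.\ and hence $\Es_m[M^u_t h(X_0)]=\int_E h(x)\Es_x[M^u_t]\,m(\di x)=0$, gives
\[
\E(u,h)=-\lim_{t\downarrow0}\frac1t\,\Es_m\big[N^u_t\,h(X_0)\big],\qquad h\in\F_b.\qquad(\star)
\]
This is the only step touched by the lack of symmetry, and it is handled simply by using the two-sided approximating form rather than the symmetric Dirichlet integral. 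I would then localize $(\star)$: for $h\in\F_{b,G}$ the post-$\tau_G$ increments of $N^u$ contribute nothing to the limit, because $h$ is supported in $G$ and $G$ is finely open, so $\tau_G>0$ $\Pro_x$-a.s.\ for q.e.\ starting point that matters; by the strong Markov property at $\tau_G$ one upgrades $(\star)$ to $\E(u,h)=-\lim_{t\downarrow0}\frac1t\Es_m[N^u_{t\wedge\tau_G}h(X_0)]$ for $h\in\F_{b,G}$, call it $(\star\star)$.

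For the necessity direction I would assume $N^u_t=A^1_t-A^2_t$ for $t<\tau_G$; by continuity of $N^u$ and of the $A^i$ this identity extends to $t\wedge\tau_G$. Inserting it into $(\star\star)$ reduces the right-hand side to $-\lim_{t\downarrow0}\frac1t\Es_m[(A^1_{t\wedge\tau_G}-A^2_{t\wedge\tau_G})h(X_0)]$. The stopped functionals $A^i_{\cdot\wedge\tau_G}$ are PCAFs whose Revuz measures are $\mu_i$ restricted to $G$, and since $h$ vanishes q.e.\ off $G$ the Revuz correspondence \cite{FOT} gives $\lim_{t\downarrow0}\frac1t\Es_m[h(X_0)A^i_{t\wedge\tau_G}]=\langle\mu_i,h\rangle$, finite by the standing hypothesis $\F_{b,G}\subset L^1(\mu_i)$. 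Hence $\E(u,h)=-\langle\mu_1-\mu_2,h\rangle=\langle\mu_2-\mu_1,h\rangle$, which is the desired identity.

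For the converse I would assume $\E(u,h)=\langle\mu_2-\mu_1,h\rangle$ on $\F_{b,G}$ and set $L_t:=N^u_t-(A^1_t-A^2_t)$, a finite continuous additive functional on $[0,\tau_G)$. Combining $(\star\star)$ with the Revuz computation above, the linear functional $h\mapsto\lim_{t\downarrow0}\frac1t\Es_m[L_{t\wedge\tau_G}\,h(X_0)]$ vanishes for all $h\in\F_{b,G}$. It then remains to promote this weak vanishing to $L\equiv0$ on $[0,\tau_G)$. I would do this by passing to the part process $X^G$ of $X$ on $G$, on which $\F_{b,G}$ is a core: there the above functional is exactly the signed Revuz measure attached to the local decomposition, its annihilation on a core forces the representing smooth measure to be $0$, and injectivity of the Revuz correspondence (uniqueness of the Revuz measure of a continuous additive functional of bounded variation) yields $L=0$, that is $N^u_t=A^1_t-A^2_t$ for $t<\tau_G$.

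The main obstacle is the localization at $\tau_G$: identity $(\star)$ is global, whereas the asserted equality lives only up to the exit time, so the crux is establishing $(\star\star)$, i.e.\ showing that the behaviour of the functionals after $\tau_G$ does not survive the limit $t\downarrow0$ — this is where the strong Markov property, the fine openness of $G$, and the support condition on $h$ all enter — together with the injectivity step needed for sufficiency. Both are most cleanly organized on the part process $X^G$. By contrast, once $(\star)$ is in hand, the absence of symmetry costs essentially nothing, since only the vanishing of $\Es_x[M^u_t]$ and the two-sided approximation of $\E$ are used, both of which hold for non-symmetric regular Dirichlet forms.
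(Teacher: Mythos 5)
The paper never proves this statement: it is imported as a known transfer of Theorem 5.4.2 of \cite{FOT} to the non-symmetric setting, with the proof deferred to the thesis \cite{Wt}. So your proposal has to stand on its own, and as written it does not; both of its load-bearing steps are gaps, and the second one is fatal.

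The first gap is $(\star\star)$. Writing $T_su=\Es_{\cdot}[u(X_s)]$, the difference between $(\star)$ and $(\star\star)$ is $\frac1t\Es_m[(N^u_t-N^u_{t\wedge\tau_G})h(X_0)]$, which by additivity and the strong Markov property equals $\frac1t\Es_{h\cdot m}\bigl[1_{\{\tau_G<t\}}(T_{t-\tau_G}u-u)(X_{\tau_G})\bigr]$. Fine openness of $G$ only gives $\tau_G>0$ a.s.; it gives no rate, and in fact $\Pro_{h\cdot m}(\tau_G<t)$ is typically of order exactly $t$, not $o(t)$ (Brownian motion on $\R$, $G=(-1,1)$, $h(x)=(1-|x|)^+$: one computes $\Pro_{h\cdot m}(\tau_G<t)\sim ct$ with $c>0$). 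So the post-exit contribution is not negligible "for free": one must show that $(T_{t-\tau_G}u-u)(X_{\tau_G})$ becomes small in a sense compatible with integration against the hitting distribution of $X_{\tau_G}$, a measure that concentrates on the fine boundary of $G$ and may be singular with respect to $m$, whereas $T_su-u\to0$ is available only in $L^2(m)$ or q.e.\ along subsequences. Nothing in the proposal addresses this; making $(\star\star)$ rigorous is comparable in difficulty to the localization machinery it is meant to replace.

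The second gap is the sufficiency direction. You set $L:=N^u-(A^1-A^2)$ and deduce $L\equiv0$ on $[0,\tau_G)$ from the vanishing of $h\mapsto\lim_{t\downarrow0}\frac1t\Es_m[L_{t\wedge\tau_G}h(X_0)]$ on $\F_{b,G}$ by "injectivity of the Revuz correspondence (uniqueness of the Revuz measure of a CAF of bounded variation)". But injectivity of the Revuz map applies only to CAFs already known to be of bounded variation, and $L$ is not known to be such: that $N^u$ is of bounded variation on $[0,\tau_G)$ is precisely the conclusion being proved, so the step is circular. Nor does vanishing of that weak limit characterize the zero CAF among zero-energy CAFs without a representation theorem: for $G=E$ and $A^1=A^2=0$ the functional attached to $L=N^u$ is $-\E(u,\cdot)$, and nothing elementary connects its values back to the pathwise behaviour of $N^u$ except the very theorem under discussion. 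The argument that actually works (the proof of Theorem 5.4.2 in \cite{FOT}, adapted in \cite{Wt} to the non-symmetric case) is potential-theoretic: one uses the hypothesis $\E(u,h)=\langle\mu_2-\mu_1,h\rangle$ on $\F_{b,G}$ to represent $u$, relative to the part process on $G$, as a difference of $\alpha$-potentials of $\mu_1,\mu_2$ plus a correction, invokes the explicit identification of $N^{U_\alpha\mu}$ (namely $N^{U_\alpha\mu}_t=-A_t+\alpha\int_0^tU_\alpha\mu(X_s)\di s$) for potentials, and concludes by uniqueness of the Fukushima decomposition. Your weak-limit functional is a good mnemonic and does get the bookkeeping of the necessity direction right (the signs match the statement), but it cannot substitute for that machinery in either direction.
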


\begin{defi}\label{NcF*}
We define $\mathcal{N}_{c}^0$ as the set of CAFs $C$ such that, there exists
$u$ in $\mathcal{F}$ and finite PCAFs $A^1, A^2$ with Revuz measure
$\mu_1$
and $\mu_2$, respectively, satisfying: $\mathcal{F}_{b}\subset
L^1(E,\mu_i)$ and
$\mathbf{P}_x(C_t=N_t^u+A^2_t-A^1_t\mbox{ for }t<\infty)=1\mbox{
for q.e. }x\in E$.
In this case, we define the linear functional $\Theta(C)$ on $\mathcal{F}_{b}$
by
\[
\big\langle \Theta(C),h\big\rangle :=-\mathcal{E}(u,h)+\langle \mu_2-\mu_1,h\rangle ,
\qquad h\in \mathcal{F}_{b}.
\]
\end{defi}

It follows from Theorem \ref{T5.4.2FOT} that the definition of $\Theta
(C)$ for $C\in\mathcal{N}_{c}^0$ is consistent in the sense that it
does not
depend of the elements which represent $C$.\vadjust{\goodbreak}

The following lemma is an immediate consequence of Theorem \ref{T5.4.2FOT}.

\begin{lema}\label{T2.2N}
Let $C^{(1)}$ and $C^{(2)}$ be elements of $\mathcal{N}_c^0$ and $G$ a nearly
Borel finely open set. Then $C^{(1)}=C^{(2)}$ on $\llbracket0,\sigma_{E
\setminus G}\llbracket$ $\mathbf{P}_x$-a.e. for q.e. $x\in E$ if and
only if
\[
\bigl\langle \Theta\bigl(C^{(1)}\bigr),h\bigr\rangle =\bigl\langle \Theta\bigl(C^{(2)}\bigr),h\bigr\rangle \qquad
\mbox{for all }h\in \mathcal{F}_{b,G}.
\]
\end{lema}

We recall that an increasing sequence of nearly Borel finely open sets
$(G_n)_{n\in\mathbb{N}}$ is called a nest if $\tau_{G_n}\uparrow
\zeta$
$\mathbf{P}_x$-a.s. for q.e. $x\in E$.

\begin{defi}\label{flocal}
Let $\Gamma$ be a class of local AF's. Following \cite{CFKZ}, we say
that a $(\mathcal{F}_t)$-adapted process $A$ is locally in $\Gamma$, and
write: $A\in\Gamma_{f\mbox{-}\mathit{loc}}$, if there exists a sequence $A^n$
in $\Gamma$ and a nest of nearly Borel finely open sets $\{G_n\}$ such
that $A_t=A_t^n$ for $t<\tau_{G_n}$ $\mathbf{P}_x$-a.e. for q.e.
$x\in E$.
In this case, $A$~is hence a local AF. (See \cite{CFKZ}, page 939, for
the definition of local~AF.)
\end{defi}

\begin{defi}\label{defafzqv}A local AF $V$ is said to be of zero
quadratic variation if for any $t>0$:
$\sum_{k=0}^{n-1}(V_{t(i+1)/n}-V_{ti/n})^2$ converges to zero as
$n\rightarrow\infty$ in $\mathbf{P}_{g\cdot m}$ measure on $\{t<\zeta\}$ for
some (and therefore for all) strictly positive $g\in L^1(E,m)$.
\end{defi}

We denote by $\mathcal{N}$ the set of CAFs of finite energy and of zero
quadratic variation. In \cite{Wt}, we have established the following
theorem of representation for the elements of $\mathcal{N}$.

\begin{theore}\label{teore1.1walsh}
Let $C$ be an element of $\mathcal{N}_{f\mbox{-}\mathit{loc}}$. There exists a
nest of
nearly Borel finely open sets $(G_n)$ and $(u_n)\in\mathcal{F}$ such that
$\mathbf{P}_x$-a.e. for q.e. $x\in E$:
\[
C_t=N_t^{u_n}-\int_0^tu_n(X_s)
\,\mathrm{d}s\qquad \mbox{for all }t<\tau_{G_n}.
\]
\end{theore}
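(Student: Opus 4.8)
The statement is local in nature, so the plan is to first reduce to a single localizing set and then solve an analytic problem on it. Since $C\in\Nc_{f\mbox{-}loc}$, Definition \ref{flocal} provides a nest $(G_n)$ of nearly Borel finely open sets and processes $C^n\in\Nc$ with $C_t=C_t^n$ for $t<\tau_{G_n}$, $\Pro_x$-a.e. for q.e. $x$. It therefore suffices to prove the following local version: given $N\in\Nc$ and a finely open set $G$ from the nest, there is $u\in\F$ with $N_t=N^u_t-\int_0^t u(X_s)\di s$ for $t<\tau_G$. Applying this to each pair $(C^n,G_n)$ produces $u_n\in\F$, and since $C=C^n$ on $[0,\tau_{G_n})$ it yields the desired identity there; the output nest is then $(G_n)$ itself, up to the refinement that the local construction may require.

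The construction of $u$ is a representation argument on the Hilbert space $(\F,\tilde\E_1)$. First I would attach to $N$ its drift functional: testing $N$ against $h\in\F$ through the mutual energy with the finite energy martingale additive functionals, one is led to a linear functional $\Lambda$ on $\F$ essentially of the form $\Lambda(h)=-2\,e(N,M^h)$ up to a lower order $L^2(E,m)$-term. The finite energy of $N$ enters here in an essential way: the Cauchy--Schwarz inequality for mutual energy gives $|e(N,M^h)|\le e(N)^{1/2}e(M^h)^{1/2}$, and since $e(M^h)\le\tilde\E_1(h,h)$ this yields the bound $|\Lambda(h)|\le c\,\tilde\E_1(h,h)^{1/2}$; the zero quadratic variation of $N$ guarantees that $N$ carries no martingale component, so that it can indeed be matched by a pure drift. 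Because $\E$ is not symmetric I cannot appeal to the Riesz theorem directly; instead the sector condition of the regular Dirichlet form lets me invoke the Lax--Milgram theorem to produce a unique $u\in\F$ solving $\E_1(u,h)=\Lambda(h)$ for every $h\in\F$.

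It then remains to check that this $u$ does the job. Set $C':=N^u-\int_0^\cdot u(X_s)\di s$. Splitting $u=u^+-u^-$ shows $\int_0^\cdot u(X_s)\di s$ is a difference of finite PCAFs with Revuz measures $u^\pm m$, so $C'\in\Nc_c^0$, and a direct computation from Definition \ref{NcF*} gives $\langle\Theta(C'),h\rangle=-\E(u,h)-(u,h)=-\E_1(u,h)=-\Lambda(h)$ for $h\in\F_{b,G}$. I would then show that $N$ is, on the set $G$, likewise an element of $\Nc_c^0$ and that its functional satisfies $\langle\Theta(N),h\rangle=-\Lambda(h)$ for $h\in\F_{b,G}$, so that $\Theta(N)$ and $\Theta(C')$ agree on $\F_{b,G}$. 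Lemma \ref{T2.2N} then delivers $N=C'$ on $\LL 0,\sigma_{E\setminus G}\TT=\LL 0,\tau_G\TT$, $\Pro_x$-a.e. for q.e. $x$, which is the local version, and the reduction of the first paragraph closes the argument.

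The main obstacle is the identification in the last step: proving that the mutual-energy functional $\Lambda$ really computes the Revuz-type drift of $N$ in the sense of $\Theta$, equivalently that a general $N\in\Nc$ is locally of the form $N^v+A^2-A^1$ with finite energy Revuz measures, so that $\Theta(N)$ is defined on $\F_{b,G}$ and Lemma \ref{T2.2N} applies. Tied to it is the quantitative heart of the argument, the estimate $|\Lambda(h)|\le c\,\tilde\E_1(h,h)^{1/2}$, which is exactly where the finite energy and zero quadratic variation hypotheses are used and where the non-symmetry forces Lax--Milgram in place of Riesz. The remaining localization bookkeeping---gluing the functions $u_n$ and, if necessary, refining $(G_n)$ into a genuine nest on which each representation holds up to $\tau_{G_n}$---is routine.
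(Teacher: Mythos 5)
First, a point of reference: the paper itself does not prove this theorem --- it is imported as a known result from the author's thesis \cite{Wt} (``In \cite{Wt} we have established the following Theorem''), so your argument has to stand on its own. It does not, because its central analytic object is wrong. You propose to recover the drift $u$ from the mutual-energy functional $\Lambda(h)=-2\,e(N,M^h)$ (up to an $L^2(E,m)$ correction) and then to verify $<\Theta(N),h>=-\Lambda(h)$ so that Lemma \ref{T2.2N} applies. But energy pairings with martingale additive functionals are blind to precisely the processes this theorem produces. Any process of the form $N^w_t-\int_0^t w(X_s)\di s$ with $w\in\F$ has zero energy (both $N^w$ and the drift term do), hence $e(N,M^h)=0$ for every $h$ by the very Cauchy--Schwarz inequality you invoke. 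Now take $N=N^w-\int_0^\cdot w(X_s)\di s$ with $w\neq 0$: such an $N$ belongs to $\Nc$, and any proof of the theorem must in particular be consistent with this case. Your recipe gives $\Lambda\equiv 0$ modulo the lower-order term, hence essentially $u=0$, whereas the identification through Lemma \ref{T2.2N} forces $\E_1(u,h)=-<\Theta(N),h>=\E_1(w,h)$ for $h$ in $\F_{b,G}$, i.e.\ $u=w$ there. No ``lower order $L^2(E,m)$-term'' can repair this, since $\E(w,\cdot)$ is not bounded by the $L^2$ norm. So the step you yourself flag as ``the main obstacle'' is not merely unproven --- it is false as stated: mutual energy computes the martingale part of an additive functional and carries no information about its zero-energy (drift) part.

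The functional that does characterize the representation is the Nakao-type small-time characteristic $h\mapsto\lim_{t\downarrow 0}\frac{1}{t}\Es_{h\cdot m}\left[N_t\right]$, which equals $-\E_1(w,h)$ when $N=N^w-\int_0^\cdot w(X_s)\di s$; the real content of the theorem, as established in \cite{Wt} (and, in the symmetric case, by Chen et al.\ \cite{CFKZ} via time reversal), is that for a general element of $\Nc$ this limit exists and is $\tilde{\E}_1$-bounded \emph{locally}, on a nest which must itself be constructed. That is exactly where finite energy and, crucially, zero quadratic variation enter, e.g.\ through forward--backward martingale decompositions of Lyons--Zheng type such as those the paper uses in its Section 4; your proposal contains no argument for this step. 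Your outer skeleton --- localize via Definition \ref{flocal}, solve $\E_1(u,h)=\Lambda(h)$ by Lax--Milgram under the sector condition (the paper's fact (4)), then identify via $\Theta$ and Lemma \ref{T2.2N} --- is sound and would indeed close the proof once the correct functional is shown to exist and be bounded; but with the mutual-energy functional in its place, the construction returns the wrong $u$ from the very start.
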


\section{Stochastic integration}\label{sectionsi}

Consider an element $u$ of $\mathcal{F}$ and two finite smooth measure
$\mu_1$
and $\mu_2$ such that $\mathcal{E}(u,h)=\langle \mu_1-\mu_2,h\rangle $ for any
element $h$
of $ \mathcal{F}_{b}$. Thanks to Theorem \ref{T5.4.2FOT}, we know
that $N^u$
is of bounded variation. The integral $(f*N^u)_t:=\int_0^ t
f(X_s)\,\mathrm{d}
N_s^ u$ is hence well-defined as a Lebesgue--Stieltjes integral,
moreover, if $f$ belongs to $\mathcal{F}_b$, $f*N^ u$ belongs to
$\mathcal{N}_c^0$
(see Definition \ref{NcF*}) and for any $h$ in $\mathcal{F}_b$ we have:
%
\begin{equation}\label{012102}
\bigl\langle \Theta\bigl(f*N^u\bigr),h \bigr\rangle =\bigl\langle  \Theta\bigl(N^u\bigr),fh\bigr\rangle .
\end{equation}
Thanks to Lemma \ref{T2.2N}, the above equation characterizes the
local CAF $f*N^u$. In order to define the integral of $f$ with respect
to a process $N^u$ which is not necessarily of bounded variation, it is
hence natural to construct a local CAF still denoted by $f*N^u$
satisfying the equation (\ref{012102}). This has been done by Nakao
\cite{N} for the symmetric case and the aim of this section is to do
it for the nonnecessarily symmetric case.

The construction of $f*N^u$ is based on a decomposition of $N^u$ in
three components (see Lemma \ref{l030308} below). The first component
is associated to the diffusion part of $\tilde{\mathcal{E}}$, the symmetric
component of $\mathcal{E}$. The second one is associated to the jump
part of
$\tilde{\mathcal{E}}$ and the third one is a local CAF of bounded variation.
Once this decomposition done, the construction of $f*N^u$ will be close
to Nakao's construction in the symmetric case.

Thanks to a localization argument and Theorem \ref{teore1.1walsh}, we
will construct the integral $f*C$ for any $f\in\mathcal{F}_{\mathit{loc}}$ and
$A\in
\mathcal{N}_{f\mbox{-}\mathit{loc}}$.
We always consider $\mathcal{F}$ to be equipped with the norm $\tilde
{\mathcal{E}}_1$.
We will use repeatedly the following facts:
 \begin{enumerate}[(4)]
\item[(1)] If a PCAF $A$ with Revuz measure $\mu$ satisfies $\mu(E)<\infty$
then $A$ is finite continuous. Indeed, it is consequence of Lemma 4.3
of \cite{K}. This is the case when $A=\langle M \rangle$ for $M\in\M$.

\item[(2)] If $A$ is a PCAF $A$ with Revuz measure $\mu$ of finite energy
integral (i.e., there exists $U_1\mu$ in $\mathcal{F}$ such that
$\int_Eh(x)\mu(\mathrm{d}x)=\mathcal{E}_1(U_1\mu,h)$ for all $h\in
\mathcal{F}$), then $A$ is
finite continuous. In fact, for any $t$, $\mathbf{E}_x(A_t)\leq
\mathrm{e}^tU_1\mu
(x)<\infty$ q.e.

\item[(3)] For two CAF, $A,B$ and a nearly Borel set $G$, we have for q.e.
$x\in E$,\linebreak[4]  $\mathbf{P}_x(A = B\mbox{ on
}\llbracket
0,\tau_G\llbracket)=1$ if and only if for q.e. $x\in E$, $\mathbf
{P}_x(A=B\mbox{
on }\llbracket0,\sigma_{E\setminus G}\llbracket)=1$.

\item[(4)] If $J\dvtx \mathcal{F}\rightarrow\mathbb{R}$ is a continuous linear
functional, there
exists a unique $w\in\mathcal{F}$ such that $J(h)=\mathcal
{E}_1(w,h)$ for any $h\in\mathcal{F}
$. (See Theorem I.2.6. in \cite{MaR}.)
\end{enumerate}

\subsection{A decomposition of $N^u$}

We denote by $\tilde{\mathcal{E}}$ the symmetric part of $\mathcal
{E}$ and denote by
$\tilde{\mathcal{E}}^{(c)}$ and $\tilde{\mathcal{E}}^{(j)}$ the
diffusion part and
the jumping part of $\tilde{\mathcal{E}}$, respectively, in the Beurling--Deny
decomposition of $\tilde{\mathcal{E}}$. (See Section 5.3 in \cite{FOT}.)
For $u$ in $\mathcal{F}$, the applications $h\rightarrow\tilde
{\mathcal{E}
}^{(c)}(u,h)$ and $h\rightarrow\tilde{\mathcal{E}}^{(j)}(u,h)$ are
continuous. This leads to the following lemma.

\begin{lema}\label{lcont}
For $u$ in $\mathcal{F}$, there exists unique elements $w$ and $v$ of
$\mathcal{F}$
such that
$\mathcal{E}_1(w,h)=\tilde{\mathcal{E}}^{(c)}(u,h)$ and $\mathcal
{E}_1(v,h)=\tilde{\mathcal{E}
}^{(j)}(u,h)$ for any $h\in\mathcal{F}$.
\end{lema}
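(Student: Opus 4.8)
The statement is, at bottom, a Riesz-representation argument, so the plan is to reduce it to fact (4) above. Precisely, it suffices to check that for a fixed $u\in\F$ the two linear maps $h\mapsto\tilde{\E}^{(c)}(u,h)$ and $h\mapsto\tilde{\E}^{(j)}(u,h)$ are continuous on $\F$ endowed with the $\tilde{\E}_1$-norm; fact (4) then produces unique elements $w,v\in\F$ with $\E_1(w,h)=\tilde{\E}^{(c)}(u,h)$ and $\E_1(v,h)=\tilde{\E}^{(j)}(u,h)$ for all $h\in\F$, and uniqueness comes for free. I would therefore devote the whole argument to the continuity.

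First I would invoke the Beurling-Deny decomposition of the symmetric form $\tilde{\E}$ (section 5.3 of \cite{FOT}): its diffusion part $\tilde{\E}^{(c)}$, its jumping part $\tilde{\E}^{(j)}$ and its killing part are each nonnegative symmetric bilinear forms whose sum is $\tilde{\E}$. On the diagonal this forces $\tilde{\E}^{(c)}(h,h)\leq\tilde{\E}(h,h)$ and $\tilde{\E}^{(j)}(h,h)\leq\tilde{\E}(h,h)$, and since $\tilde{\E}$ is the symmetric part of $\E$ we have $\tilde{\E}(h,h)=\E(h,h)\leq\E_1(h,h)=\tilde{\E}_1(h,h)$. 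In particular both $\tilde{\E}^{(c)}(h,h)$ and $\tilde{\E}^{(j)}(h,h)$ are finite and bounded by $\tilde{\E}_1(h,h)$ for every $h\in\F$.

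Next, because $\tilde{\E}^{(c)}$ and $\tilde{\E}^{(j)}$ are nonnegative symmetric bilinear forms they obey the Cauchy-Schwarz inequality, and combining it with the previous bound gives, for fixed $u\in\F$ and every $h\in\F$,
\begin{equation}
|\tilde{\E}^{(c)}(u,h)|\leq\tilde{\E}^{(c)}(u,u)^{1/2}\,\tilde{\E}^{(c)}(h,h)^{1/2}\leq\tilde{\E}^{(c)}(u,u)^{1/2}\,\tilde{\E}_1(h,h)^{1/2},\nonumber
\end{equation}
together with the analogous estimate $|\tilde{\E}^{(j)}(u,h)|\leq\tilde{\E}^{(j)}(u,u)^{1/2}\,\tilde{\E}_1(h,h)^{1/2}$. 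The constants $\tilde{\E}^{(c)}(u,u)^{1/2}$ and $\tilde{\E}^{(j)}(u,u)^{1/2}$ are finite by the domination of the previous step applied to $u$, so the two maps are bounded linear functionals on $(\F,\tilde{\E}_1)$, which is exactly the continuity required.

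The one delicate point---and the part I expect to need the most care---is that the Beurling-Deny decomposition is classically stated only on a core such as $\F\cap C_0(E)$, not on all of $\F$. I would deal with this by first establishing the estimates above on the core and using the $\tilde{\E}_1$-density of the core to extend $h\mapsto\tilde{\E}^{(c)}(u,h)$ and $h\mapsto\tilde{\E}^{(j)}(u,h)$ continuously to $\F$; the diagonal domination $\tilde{\E}^{(c)}(h,h),\tilde{\E}^{(j)}(h,h)\leq\tilde{\E}_1(h,h)$ guarantees that these extensions coincide with the forms themselves and that the limiting procedure is legitimate. Once continuity holds on all of $\F$, fact (4) closes the argument.
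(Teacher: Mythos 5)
Your proof is correct and is essentially the paper's own argument: the paper simply asserts that $h\mapsto\tilde{\E}^{(c)}(u,h)$ and $h\mapsto\tilde{\E}^{(j)}(u,h)$ are $\tilde{\E}_1$-continuous and then invokes the representation fact (4) (Theorem I.2.6 of \cite{MaR}), which is exactly your reduction. Your Cauchy--Schwarz estimate combined with the diagonal domination $\tilde{\E}^{(c)}(h,h),\,\tilde{\E}^{(j)}(h,h)\leq\tilde{\E}(h,h)\leq\tilde{\E}_1(h,h)$ coming from the nonnegativity of the Beurling--Deny components is precisely the justification of that continuity (and the core-versus-$\F$ issue you flag is handled by the standard fact that for a regular Dirichlet form the decomposition holds on all of $\F$ when quasi-continuous versions are used).
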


\begin{defi}\label{cNu}
For any $u\in\mathcal{F}$, set: $^c\tilde{N}^u_t:=N_t^w-\int_0^t
w(X_s)\,\mathrm{d}
s$ and $^j\tilde{N}^u_t:=N_t^v-\int_0^t v(X_s)\,\mathrm{d}s$ where $w$ and
$v$ are the elements of $\mathcal{F}$ given by Lemma \ref{lcont}.
\end{defi}

It is clear that $^c\tilde{N}^u$ and $^j\tilde{N}^u$ belongs to
$\mathcal{N}_c^0$ and
\begin{eqnarray}\label{lcNu}
\bigl\langle \Theta\bigl({}^c\tilde{N}^u\bigr),h\bigr\rangle &=&-\tilde{
\mathcal {E}}^{(c)}(u,h)\quad \mbox{and }
\nonumber\\[-8pt]\\[-8pt]
\bigl\langle \Theta\bigl({}^j\tilde{N}^u\bigr),h\bigr\rangle &=&-\tilde{
\mathcal {E}}^{(j)}(u,h)\qquad \mbox{for all }h\in\mathcal{F}_b.
\nonumber
\end{eqnarray}
For $u$ in $ \mathcal{F}$, the application $h\rightarrow\mathcal
{E}_1(u,h)$ is
continuous. Hence, there exists a unique $u^*$ in $\mathcal{F}$ such that
%
\begin{equation}
\label{eq2} \mathcal{E}_1(u,h)=\tilde{\mathcal{E}}_1
\bigl(u^*,h\bigr),\qquad  h\in\mathcal{F}.
\end{equation}
Moreover, we have:
%
\begin{equation}\label{020311}
\mathcal{E}_1\bigl(u^*,u^*\bigr)\leq K^2
\mathcal{E}_1(u,u),\vadjust{\goodbreak}
\end{equation}
where $K$ is a continuity constant of $\mathcal{E}$, which means that
$\mathcal{E}$
satisfies the sector condition:
\[
\bigl|\mathcal{E}_1(v,w)\bigr|\leq K \bigl(\mathcal{E}_1(v,v)
\bigr)^{1/2}\bigl(\mathcal {E}_1(w,w)\bigr)^{1/2}\qquad
\mbox{for all }v,w\in\mathcal{F}.
\]

\begin{lema}\label{l030308}
For $u$ in $\mathcal{F}$, let $u^*$ be given by (\ref{eq2}). Denote
by $\tilde
{k}$ the killing measure of $\tilde{\mathcal{E}}$ and by $\tilde{K}$
the PCAF
associated to $\tilde{k}(\mathrm{d}x)$ by the Revuz correspondence.
Then we
have $\mathbf{P}_x$-a.e. for q.e. $x\in E$ for any $t<\infty$
%
\begin{eqnarray}\label{032907}
N_t^u&=&{}^c\tilde{N}^{u^*}+
{}^j\tilde{N}^{u^*}-\int_0^tu^*(X_s)
\,\mathrm{d}\tilde{K}_s+\int_0^t
\bigl(u-u^*\bigr) (X_s)\,\mathrm {d}s.
\end{eqnarray}
\end{lema}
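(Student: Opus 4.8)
The plan is to prove the decomposition (\ref{032907}) by applying the characterization Lemma \ref{T2.2N}: since both sides of the claimed identity are candidates to agree on $\LL 0,\sigma_{E\setminus G}\LL$, it suffices to show that the functional $\Theta$ of the right-hand side agrees with $\Theta(N^u)$ on all $h\in\F_b$. So first I would verify that every process appearing on the right-hand side belongs to $\Nc_c^0$ and compute its $\Theta$. The terms $\mbox{}^c\tilde N^{u^*}$ and $\mbox{}^j\tilde N^{u^*}$ already have their $\Theta$ given by (\ref{lcNu}), namely $-\tilde\E^{(c)}(u^*,h)$ and $-\tilde\E^{(j)}(u^*,h)$. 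The term $-\int_0^t u^*(X_s)\di\tilde K_s$ is a Lebesgue-Stieltjes integral against a PCAF, so it lies in $\Nc_c^0$ with $\Theta$ equal to (minus) the pairing of $h$ against the measure $u^*\,\tilde k(\di x)$; and the absolutely continuous term $\int_0^t(u-u^*)(X_s)\di s$ contributes $\langle(u-u^*)m,h\rangle=((u-u^*),h)$ in $L^2(E,m)$.

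Next I would assemble these contributions and match them against $\langle\Theta(N^u),h\rangle=-\E(u,h)$. Writing out the sum, the claim reduces to the analytic identity
\begin{equation}
-\E(u,h)=-\tilde\E^{(c)}(u^*,h)-\tilde\E^{(j)}(u^*,h)-\langle u^*\tilde k,h\rangle+((u-u^*),h),\nonumber
\end{equation}
valid for all $h\in\F_b$. The key input here is the Beurling-Deny decomposition of the symmetric part: $\tilde\E(u^*,h)=\tilde\E^{(c)}(u^*,h)+\tilde\E^{(j)}(u^*,h)+\langle u^*\tilde k,h\rangle$. Substituting this, the right-hand side collapses to $-\tilde\E(u^*,h)+((u-u^*),h)=-\tilde\E_1(u^*,h)+(u,h)$. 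Finally, invoking the defining relation (\ref{eq2}), $\tilde\E_1(u^*,h)=\E_1(u,h)=\E(u,h)+(u,h)$, the $(u,h)$ terms cancel and one is left with exactly $-\E(u,h)$, as required.

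The main obstacle, and the place demanding the most care, is the bookkeeping around the killing measure and the split of $\tilde\E_1$ into its diffusion, jump, and killing parts. One must make sure that the PCAF $\tilde K$ is finite continuous so that $\int_0^t u^*(X_s)\di\tilde K_s$ is well defined and lies in $\Nc_c^0$; this follows from fact (2) in the preliminaries once the Revuz measure $u^*\tilde k$ is seen to have finite energy integral, which it does because $u^*\in\F$ and $\tilde k$ is the killing measure of $\tilde\E$. One must also confirm that $\F_b\subset L^1(E,\mu_i)$ for each of the smooth measures involved, so that $\Theta$ is legitimately defined on all of $\F_b$. Once these integrability points are settled, the computation is a purely algebraic cancellation driven by (\ref{eq2}) and the Beurling-Deny formula.

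Having established the equality of the two $\Theta$-functionals on $\F_b$, Lemma \ref{T2.2N} (applied with $G=E$, i.e. $\sigma_{E\setminus G}=\zeta$) yields that the two CAFs coincide $\Pro_x$-a.e. for q.e. $x$ for all $t<\infty$, which is precisely (\ref{032907}).
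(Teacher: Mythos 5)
Your proposal is correct and follows essentially the same route as the paper's own proof: verify the right-hand side lies in $\Nc_c^0$ (with the killing term handled via the finite-energy-integral bound coming from the Beurling--Deny decomposition), compute its $\Theta$-functional term by term, collapse it to $-\tilde{\E}_1(u^*,h)+(u,h)=-\E(u,h)$ using (\ref{eq2}), and conclude by Lemma \ref{T2.2N}. The only cosmetic slip is that with $G=E$ one has $\sigma_{E\setminus G}=\infty$ rather than $\zeta$, which in fact gives the conclusion for all $t<\infty$ exactly as stated.
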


\begin{pf}
From the Beurling--Deny decomposition of $\tilde{\mathcal{E}}$, we
have that
for any $h\in\mathcal{F}$,
\[
\int_E \bigl|h(x)u^*(x)\bigr|\tilde{k}(\mathrm{d}x)\leq\bigl[
\mathcal {E}_1(h,h)\bigr]^{1/2}\bigl[\mathcal{E}_1
\bigl(u^*,u^*\bigr)\bigr]^{1/2}
\]
thus $\int_0^t|u^*(X_s)|\,\mathrm{d}\tilde{K}_s$ is a finite PCAF.
Then $\int_0^tu^*(X_s)\,\mathrm{d}\tilde{K}_s$ is an element of $\mathcal
{N}_c^0$ and then, the
right-hand side of (\ref{032907}) belongs to $\mathcal{N}_c^0$.
Denote this
element by $C$. The killing part $\tilde{\mathcal{E}}^{(k)}$ of
$\tilde{\mathcal{E}}$
satisfies
\[
\tilde{\mathcal{E}}^{(k)}\bigl(u^*,h\bigr)=\int_E
h(x)u^*(x)\tilde{k}(\mathrm {d}x)\qquad \mbox{for any }h\in\mathcal{F}.
\]
It follows from (\ref{lcNu}) that for all $h\in\mathcal{F}$:
\begin{eqnarray*}
\bigl\langle \Theta(C),h\bigr\rangle &=&-\tilde{\mathcal{E}}\bigl(u^*,h\bigr)+\bigl(u-u^*,h\bigr)
\nonumber
\\
&=&-\mathcal{E}(u,h).
\nonumber
\end{eqnarray*}
Then (\ref{032907}) follows from Lemma \ref{T2.2N}.
\end{pf}

\subsection{\texorpdfstring{Stochastic integration with respect to $^c\tilde{N}^u$}
{Stochastic integration with respect to cNu}}\label{subsectioncNu}

The following lemma is Lemma 5.1.2 and Corollary 5.2.1 of \cite{FOT}
that we recall for reader's convenience. In \cite{FOT}, it is
established for the symmetric case but is also valid for the
nonsymmetric case. In fact, its proof is based on the inequality
(5.1.1) of \cite{FOT} which is proved, for example, in Lemma 4.7 of
\cite{K} for the nonsymmetric case.

\begin{lema}\label{l5.1.2FOT}
Let $(u_n)$ be a sequence of quasi continuous functions in $\mathcal
{F}$ and
$\tilde{\mathcal{E}}_1$-convergent to $u$. Then there exists a
subsequence $\{
u_{n_k}\}$ such that for q.e. $x\in E$,
\[
\mathbf{P}_x\bigl(u_{n_k}(X_t)\mbox{ converges
uniformly to $u(X_t)$ on each compact interval of }[0,\infty)
\big)=1
\]
and the same holds for $N^{u_{n_k}}$ and $N^u$, and for $M^{u_{n_k}}$
and $M^u$, replacing $u_{n_k}(X)$ and $u(X)$, respectively.
\end{lema}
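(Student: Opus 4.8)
The plan is to reduce the whole statement to a Borel--Cantelli argument driven by the $\tilde{\E}_1$-norm. Since $u_n\to u$ in $\tilde{\E}_1$, I would first extract a subsequence $(u_{n_k})$ converging geometrically fast, say with $\tilde{\E}_1(w_k,w_k)\le 2^{-3k}$ where $w_k:=u_{n_{k+1}}-u_{n_k}$. It then suffices to prove that each of the three sequences $u_{n_k}(X)$, $M^{u_{n_k}}$ and $N^{u_{n_k}}$ is, for q.e.\ $x$, $\Pro_x$-a.s.\ uniformly Cauchy on every compact subinterval of $[0,\infty)$; taking the single subsequence common to the countably many exceptional sets arising below, the limits are $u(X)$, $M^u$ and $N^u$ by linearity and continuity of the Fukushima decomposition.

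For the sequence $u_{n_k}(X)$ I would use the capacitary Chebyshev inequality, which remains valid in the non-symmetric case because the capacity is defined through $\tilde{\E}_1$:
$$\Capa\big(\{|w_k|>2^{-k}\}\big)\le 2^{2k}\,\tilde{\E}_1(w_k,w_k)\le 2^{-k}.$$
Hence $\sum_k\Capa(\{|w_k|>2^{-k}\})<\infty$, and the FOT lemma relating capacity to the hitting behaviour of $X$ shows that, for q.e.\ $x$, $\Pro_x$-a.s.\ there is a random index $K$ such that $X$ never meets $\{|w_k|>2^{-k}\}$ on $[0,T]$ once $k\ge K$. Consequently $\sup_{t\le T}|w_k(X_t)|\le 2^{-k}$ eventually, and the telescoping series converges uniformly on each $[0,T]$.

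For the martingale parts I would exploit energy. With $M^{w_k}=M^{u_{n_{k+1}}}-M^{u_{n_k}}$, the bound $e(M^{w_k})\le\tilde{\E}_1(w_k,w_k)$ (obtained from the Beurling--Deny decomposition of the symmetric part $\tilde{\E}$), the Revuz identity $\Es_m[(M^{w_k}_T)^2]=\Es_m[\langle M^{w_k}\rangle_T]\le 2T\,e(M^{w_k})$, and Doob's $L^2$ maximal inequality give
$$\Es_m\Big[\sup_{t\le T}(M^{w_k}_t)^2\Big]\le 8T\,\tilde{\E}_1(w_k,w_k)\le 8T\cdot 2^{-3k}.$$
Summability plus Borel--Cantelli yields $\Pro_m$-a.s.\ uniform convergence of $M^{u_{n_k}}$ on compacts, which I would upgrade to q.e.\ $x$ by refining the $m$-negligible exceptional set into a properly exceptional (capacity-zero) set, as is standard for additive functionals of finite energy. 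The convergence of $N^{u_{n_k}}$ then follows for free by difference, since $N^{u_{n_k}}_t=u_{n_k}(X_t)-u_{n_k}(X_0)-M^{u_{n_k}}_t$.

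The Borel--Cantelli bookkeeping is routine; the real work is checking that the two analytic inputs survive the loss of symmetry. Both the capacitary level-set estimate and the capacity-to-hitting lemma must be read off the symmetric form $\tilde{\E}_1$ (which is exactly what $\tilde{\E}_1$-convergence controls), and the comparison $e(M^{w})\le\tilde{\E}_1(w,w)$ must be re-derived from the Beurling--Deny decomposition of $\tilde{\E}$ together with the sector condition. I expect the most delicate point to be the passage from $m$-a.e.\ to q.e.\ starting points in the martingale step; this is precisely where one needs the non-symmetric versions of the FOT machinery recorded in \cite{Wt}.
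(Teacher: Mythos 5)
The paper itself offers no proof of this lemma: it is imported verbatim from Fukushima--Oshima--Takeda (Lemma 5.1.2 and Corollary 5.2.1 of \cite{FOT}) with the remark that the symmetric-case proof survives in the non-symmetric setting (cf.\ \cite{Wt}). Measured against that standard argument, your treatment of the composition part $u_{n_k}(X)$ is essentially right and genuinely capacity-based: the capacitary Chebyshev bound, the equilibrium-potential/hitting estimate and Borel--Cantelli all have non-symmetric versions (up to sector constants), so that step really does deliver a statement for q.e.\ $x$. The reduction of $N^{u_{n_k}}$ to the other two parts is fine, and the energy comparison $e(M^{w})\le\tilde{\E}_1(w,w)$ is available (the paper itself uses $e(M^{h,c})\le\tilde{\E}_1(h,h)$ in the proof of Lemma \ref{liwrtcNu}).

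The gap is in the martingale step. Your Doob--Revuz--Borel--Cantelli argument is run entirely under $\Pro_m$, so it proves uniform convergence of $M^{u_{n_k}}$ for \emph{$m$-a.e.}\ starting point, and you then dismiss the passage to q.e.\ $x$ as a standard refinement of an $m$-null set into a properly exceptional set. No such general mechanism exists: an $m$-null set need not have zero capacity (a single point is Lebesgue-null but non-exceptional for one-dimensional Brownian motion), and the usual shift/Markov argument fails here because the convergence event $\Lambda$ satisfies $\Lambda\subseteq\theta_s^{-1}\Lambda$, i.e.\ $\theta_s^{-1}\Lambda^c\subseteq\Lambda^c$, which is the wrong inclusion for propagating $\Pro_m(\Lambda^c)=0$ to $\Pro_x(\Lambda^c)=0$ for q.e.\ $x$. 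Nothing in your estimates for this part distinguishes capacity from measure, so as written you only obtain the $m$-a.e.\ version of the lemma. The correct device is to feed the q.e.\ information through objects that are rigid q.e.\ once controlled at the level of measures, namely the brackets: $\langle M^{w_k}\rangle$ is a PCAF whose Revuz mass equals $2e(M^{w_k})\le 2\cdot 2^{-3k}$, so the paper's own Lemma \ref{convCAF} (Kim's Lemma 4.3, recorded precisely for the non-symmetric case) yields a subsequence with $\langle M^{w_k}\rangle_t\to 0$ uniformly on compacts $\Pro_x$-a.s.\ for q.e.\ $x$; one then applies Doob/Chebyshev under each $\Pro_x$ (for q.e.\ $x$, $M^{w_k}$ is a $\Pro_x$-square-integrable martingale), stopping $M^{w_k}$ when its bracket exceeds a small level, and concludes by Borel--Cantelli under $\Pro_x$. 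Equivalently, one can stay with your scheme but replace the $L^1(m)$ bound by the weak-type capacitary estimate $\Capa(\{U_1\mu_k>\lambda\})\le \mu_k(E)/\lambda$ for the $1$-potentials of the bracket measures $\mu_k$, which again converts the energy decay into a q.e.\ pointwise bound at the starting point. Without one of these devices the proof does not reach the stated conclusion.
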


%

Using Lemma 4.3 of \cite{K}, we can obtain the following lemma.

\begin{lema}\label{convCAF}
Let $A^n$ be a sequence of PCAFs. Suppose that $\mu_{n}(E)$ converges
to zero as $n\rightarrow\infty$, where $\mu_n$ represents the Revuz
measures of $A_n$. Then there exists a subsequence $(n_k)$ satisfying
the condition that for q.e. $x\in E$,%
\[
\mathbf{P}_x \bigl(A_t^{n_k}\mbox{ converges
to zero uniformly on any compact} \bigr)=1.
\]
\end{lema}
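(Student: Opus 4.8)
The plan is to exploit the monotonicity of positive continuous additive functionals to reduce the uniform statement to convergence at a single time, to control $A^n_T$ by $\mu_n(E)$ through Lemma 4.3 of \cite{K}, and to extract the subsequence by a Borel--Cantelli argument run at the level of capacity. First I would reduce the problem. Since each $A^n$ is a PCAF, $t\mapsto A^n_t$ is nondecreasing with $A^n_0=0$, so $\sup_{t\in[0,T]}A^n_t=A^n_T$. Consequently ``$A^{n_k}_t\to 0$ uniformly on every compact of $[0,\infty)$'' is equivalent to ``$A^{n_k}_N\to 0$ for every integer $N$'', because for a compact $[0,T]$ one picks an integer $N\geq T$ and bounds $\sup_{t\leq T}A^{n_k}_t\leq A^{n_k}_N$. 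Thus it suffices to produce a subsequence $(n_k)$ and a set of zero capacity off which, $\Pro_x$-a.e., $A^{n_k}_N\to 0$ for all $N\in\N$ simultaneously.

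Next I would set up the estimate. The mechanism is the Revuz formula: for a strictly positive bounded $g\in L^1(E,m)$ one has $\Es_{g\cdot m}[A^n_T]=\int_0^T\langle \hat P_s g,\mu_n\rangle\,\di s\leq T\|g\|_\infty\,\mu_n(E)$, using the sub-Markovian bound $\|\hat P_s g\|_\infty\leq\|g\|_\infty$. Lemma 4.3 of \cite{K} upgrades this $L^1$-type control into a capacitary one, bounding, for each fixed $T$ and $\lambda>0$, the capacity of $\{x:\Es_x[A^n_T]>\lambda\}$ by a quantity of the form $\lambda^{-1}C_T\,\mu_n(E)$. Since $\mu_n(E)\to 0$, I then choose a subsequence with $\mu_{n_k}(E)\leq 4^{-k}$; for each fixed $N$, Chebyshev's inequality together with this capacity estimate makes $\sum_k\Capa(\{x:\Es_x[A^{n_k}_N]>2^{-k}\})$ finite, and the Borel--Cantelli lemma for capacity yields a set of zero capacity outside of which $\Es_x[A^{n_k}_N]\to 0$, hence $A^{n_k}_N\to 0$ $\Pro_x$-a.e. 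A countable intersection over $N\in\N$ together with a diagonal extraction gives a single subsequence valid for every $N$ at once; combined with the reduction above, this is the claim.

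The main obstacle is precisely the passage from an $m$-almost-everywhere conclusion to a quasi-everywhere one: the plain $L^1(\Pro_{g\cdot m})$ bound only controls the starting points $x$ up to an $m$-null set, and an $m$-null set need not be exceptional. This is what forces the argument through the capacitary form of Lemma 4.3 of \cite{K}, so that the Borel--Cantelli step can be carried out against $\Capa$ rather than against $m$. The remaining bookkeeping---selecting one subsequence good for every $N$, and verifying that the exceptional sets accumulate to a set of zero capacity---is routine once the capacity estimate is in hand.
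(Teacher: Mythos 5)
Your proposal is correct and follows essentially the route the paper intends: the paper states this lemma without proof, remarking only that it follows from Lemma 4.3 of \cite{K}, which is precisely the capacitary estimate on which your argument hinges. The surrounding steps you supply (reduction to integer times via monotonicity of PCAFs, Borel--Cantelli at the level of capacity with the rate $\mu_{n_k}(E)\leq 4^{-k}$, then Borel--Cantelli under $\Pro_x$ using the retained geometric rate $2^{-k}$ rather than mere convergence of expectations) are the standard bookkeeping and are sound.
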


%

\begin{lema}\label{liwrtcNu}
For every $u$ in $\mathcal{F}$ and $f$ in $\mathcal{F}_b$, there
exists a unique $w$ in
$\mathcal{F}$, such that:
\[
e\bigl(f*M^{u,c},M^h\bigr)=\mathcal{E}_1(w,h)
\qquad \forall h\in\mathcal{F}.
\]
\end{lema}

\begin{pf}
For $h\in\mathcal{F}$, $[e(f*M^{u,c},M^h)]^2\leq
e(f*M^{u,c})e(M^{h,c})\leq
e(f*M^{u,c})\tilde{\mathcal{E}}_1(h,h)$. Since $e(f*M^{u,c})<\infty
$, the
functional $h\rightarrow e(f*M^{u,c},M^h)$ is continuous.
\end{pf}

\begin{defi}\label{iwrtcNu}
For every $u$ in $\mathcal{F}$ and $f$ in $\mathcal{F}_b$, the
stochastic integral of
$f$ with respect to $^c\tilde{N}^u$ denoted by $\int_0^{\cdot}
f(X_s)\,\mathrm{d}^c\tilde{N}^u_s$ or by $f*{}^c\tilde{N}^u$ is defined by:
\[
\int_0^t f(X_s)
\,\mathrm{d}^c\tilde{N}^u_s
:=N^w_t-\int_0^t
w(X_s)\,\mathrm{d} s-\frac{1}{2}\bigl\langle
M^{f,c},M^{u,c}\bigr\rangle_t,\qquad t\geq0,
\]
where $w$ is the element of $\mathcal{F}$ associated to $(u,f)$ by
Lemma \ref
{liwrtcNu}.
\end{defi}

For any $u,v\in\mathcal{F}$, let $\mu^c_{\langle u,v\rangle }$ be the signed Revuz measure
associated to $\langle M^{u,c},M^{v,c}\rangle$. We have: $\frac
{1}{2}\mu^c_{\langle u,v\rangle }(E)=\tilde{\mathcal{E}}^{(c)}(u,v)$. For $f,h$ in
$\mathcal{F}_b$,
we have (Theorem 5.4 of \cite{K})
%
\begin{equation}
\label{l3.2.5FOT} \mathrm{d}\mu^c_{\langle u,hf\rangle }=f\,\mathrm{d}
\mu^c_{\langle u,h\rangle }+h\,\mathrm{d}\mu^c_{\langle u,f\rangle }.
\end{equation}

\begin{lema}\label{limiwrtcNu}
\mbox{}
\begin{enumerate}[(ii)]
\item[(i)] For $u$ in $\mathcal{F}$ and $f$ in $\mathcal{F}_b$, we
have $f*{}^c\tilde{N}^u\in\mathcal{N}_c^0$ and
%
\begin{equation}
\label{042607} \big\langle \Theta\bigl(f*{}^c\tilde{N}^u
\bigr),h\big\rangle =\big\langle \Theta\bigl({}^c\tilde {N}^u\bigr),fh\big\rangle
\qquad \mbox{for all }h\in\mathcal{F}_b.
\end{equation}
In particular, the integral is well defined in the following sense.
If $u,v\in\mathcal{F}$ are such that $^c\tilde{N}^u={}^c\tilde{N}^v$,
then for any $f\in\mathcal{F}_b$, $f*{}^c\tilde{N}^u=f*{}^c\tilde{N}^v$.

\item[(ii)] For $(u_n)$ a sequence of $\mathcal{F}$ $\tilde{\mathcal{E}
}_1$-converging to $u$, there exists a subsequence $(n_k)$ such that
for q.e. $x\in E$:
\[
\mathbf{P}_x\bigl(f*{}^c\tilde{N}^{u_{n_k}}
\mbox{ converges to }f*{}^c\tilde{N}^{u}\mbox{
uniformly on any compact}\bigr)=1.
\]
\end{enumerate}
\end{lema}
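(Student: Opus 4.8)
The plan is to handle the two assertions separately: statement (i) reduces to additivity of $\Theta$ combined with the Leibniz rule (\ref{l3.2.5FOT}), while statement (ii) follows from the convergence Lemmas \ref{l5.1.2FOT} and \ref{convCAF} once the $\F$-convergence of the functions $w_n$ attached to $(u_n,f)$ is secured.

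\emph{Part (i).} I would first split the expression of Definition \ref{iwrtcNu} as $f*\mbox{}^c\tilde N^u=\big(N^w-\int_0^\cdot w(X_s)\di s\big)-\frac12\langle M^{f,c},M^{u,c}\rangle$. The first summand is of the form treated in Definition \ref{cNu}, hence lies in $\Nc_c^0$; the second is a CAF of bounded variation whose signed Revuz measure $-\frac12\mu^c_{<f,u>}$ is finite (by Kunita--Watanabe) and integrates every element of $\F_b$, so it too lies in $\Nc_c^0$ with vanishing $\F$-part. As $\Nc_c^0$ is a linear space and $\Theta$ a linear functional, $f*\mbox{}^c\tilde N^u\in\Nc_c^0$, and the two contributions compute to $<\Theta(N^w-\int_0^\cdot w(X_s)\di s),h>=-\E_1(w,h)=-e(f*M^{u,c},M^h)$ (by the choice of $w$ in Lemma \ref{liwrtcNu}) and $<\Theta(-\frac12\langle M^{f,c},M^{u,c}\rangle),h>=-\frac12\int_E h\,\di\mu^c_{<f,u>}$. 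To obtain (\ref{042607}) it remains to match their sum with $<\Theta(\mbox{}^c\tilde N^u),fh>=-\tilde{\E}^{(c)}(u,fh)$ (using (\ref{lcNu})): expanding $\tilde{\E}^{(c)}(u,fh)=\frac12\mu^c_{<u,fh>}(E)$ by the Leibniz rule (\ref{l3.2.5FOT}) gives $\frac12\int_E f\,\di\mu^c_{<u,h>}+\frac12\int_E h\,\di\mu^c_{<u,f>}$, and the symmetry $\mu^c_{<f,u>}=\mu^c_{<u,f>}$ cancels the $h$-terms, so the identity collapses to $e(f*M^{u,c},M^h)=\frac12\int_E f\,\di\mu^c_{<u,h>}$, the defining energy relation of the MAF-integral $f*M^{u,c}$ (the Revuz measure of $\langle f*M^{u,c},M^{h,c}\rangle$ being $f\,\mu^c_{<u,h>}$). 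Well-definedness then follows: if $\mbox{}^c\tilde N^u=\mbox{}^c\tilde N^v$, Lemma \ref{T2.2N} with $G=E$ gives $\Theta(\mbox{}^c\tilde N^u)=\Theta(\mbox{}^c\tilde N^v)$ on $\F_b$, so by (\ref{042607}) (and $fh\in\F_b$) also $\Theta(f*\mbox{}^c\tilde N^u)=\Theta(f*\mbox{}^c\tilde N^v)$, and a second use of Lemma \ref{T2.2N} yields $f*\mbox{}^c\tilde N^u=f*\mbox{}^c\tilde N^v$.

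\emph{Part (ii).} The first step is $w_n\to w$ in $\F$. By linearity, $\E_1(w_n-w,h)=e(f*M^{u_n-u,c},M^h)$; taking $h=w_n-w$ and using $\E_1(g,g)=\tilde{\E}_1(g,g)$, the energy Cauchy--Schwarz inequality and $e(M^{h,c})\le\tilde{\E}_1(h,h)$, one obtains $\tilde{\E}_1(w_n-w,w_n-w)\le e(f*M^{u_n-u,c})\le\|f\|_\infty^2\,\tilde{\E}^{(c)}(u_n-u,u_n-u)\to0$. With $w_n\to w$ in $\F$, Lemma \ref{l5.1.2FOT} provides a subsequence along which $N^{w_{n_k}}\to N^w$ and $w_{n_k}(X)\to w(X)$ uniformly on compacts, $\Pro_x$-a.e.\ for q.e.\ $x$; the latter controls the drift, since $\sup_{t\le T}|\int_0^t(w_{n_k}-w)(X_s)\di s|\le T\sup_{s\le T}|w_{n_k}(X_s)-w(X_s)|\to0$. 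For the bracket term, $\langle M^{u_{n_k}-u,c}\rangle$ has Revuz mass $2\tilde{\E}^{(c)}(u_{n_k}-u,u_{n_k}-u)\to0$, so after a further refinement Lemma \ref{convCAF} gives $\langle M^{u_{n_k}-u,c}\rangle\to0$ uniformly on compacts, whence by Kunita--Watanabe $\langle M^{f,c},M^{u_{n_k}-u,c}\rangle\to0$ uniformly on compacts. Adding the three limits along the common subsequence proves (ii).

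\emph{Main obstacle.} The delicate point is the algebraic reconciliation in (\ref{042607}): the correction term $-\frac12\langle M^{f,c},M^{u,c}\rangle$ built into Definition \ref{iwrtcNu} must exactly offset the second Leibniz term in the expansion of $\tilde{\E}^{(c)}(u,fh)$, and this succeeds precisely because the covariation measure is symmetric, leaving only the energy identity for $f*M^{u,c}$. In (ii) the analogous subtlety is extracting $\F$-convergence of $w_n$ from the energy estimate before the pathwise convergence lemmas can be applied.
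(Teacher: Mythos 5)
Your proposal is correct and follows essentially the same route as the paper: in (i) the identity (\ref{042607}) is reduced, via the definition of $\Theta$, the Leibniz rule (\ref{l3.2.5FOT}) and the energy relation $e(f*M^{u,c},M^h)=\frac12\int_E f\,\di\mu^c_{<u,h>}$, to an algebraic cancellation, and well-definedness comes from Lemma \ref{T2.2N}; in (ii) the same estimate $\tilde{\E}_1(w_n-w,w_n-w)\le\|f\|_\infty^2\,\tilde{\E}_1(u_n-u,u_n-u)$ feeds Lemma \ref{l5.1.2FOT}, while Lemma \ref{convCAF} plus Kunita--Watanabe kills the bracket term. The only (immaterial) difference is that you work directly with $w_n-w$ and $u_n-u$, whereas the paper first uses linearity ($f*\mbox{}^c\tilde{N}^u-f*\mbox{}^c\tilde{N}^v=f*\mbox{}^c\tilde{N}^{u-v}$) to reduce to the case $u_n\to 0$.
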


\begin{pf}
(i) $f*{}^c\tilde{N}^u\in\mathcal{N}_c^0$ because $|\mu^c_{\langle u,f\rangle }|(E)<\infty$.
Besides for any $h\in\mathcal{F}_b$,
\begin{eqnarray*}
\big\langle \Theta\bigl(f*{}^c\tilde{N}^u\bigr),h\big\rangle &=&-e
\bigl(f*M^{u,c},M^h\bigr)-\frac
{1}{2}\int
_E h(x)\,\mathrm{d}\mu^c_{\langle f,u\rangle }
\nonumber
\\
&=&-\frac{1}{2}\int_E f(x)\,\mathrm{d}
\mu^c_{\langle h,u\rangle }-\frac{1}{2}\int_E
h(x)\,\mathrm{d}\mu^c_{\langle f,u\rangle }.
\nonumber
\end{eqnarray*}
Then (\ref{042607}) is consequence of (\ref{lcNu}) and (\ref
{l3.2.5FOT}). The second statement is consequence of Lemma~\ref{T2.2N}.

(ii) Note that for any $u,v$ in $\mathcal{F}$, $f*{}^c\tilde
{N}^u-f*{}^c\tilde{N}^v=f*{}^c\tilde{N}^{u-v}$. Thus we
need only to show that if $(u_n)$ converges to $0$ and $f\in\mathcal{F}_b$,
there exists a subsequence $(n_k)$ such that for q.e. $x\in E$:
\[
\mathbf{P}_x\bigl(f*{}^c\tilde{N}^{u_{n_k}}
\mbox{ converges to }0\mbox{ uniformly on any compact}\bigr)=1.
\]
For each $n$, let $w_n$ be the function associated to $(f,u_n)$ by
Lemma \ref{liwrtcNu}. Then for any $h\in\mathcal{F}$ we have:
$\mathcal{E}_1(w_n,h)^2\leq\|f^2\|_{\infty}\mathcal{E}_1(h,h)\mathcal
{E}_1(u_n,u_n)$. In
particular, choosing $h=w_n$, one obtains:
\[
\mathcal{E}_1(w_n,w_n)\leq\big\|f^2
\big\|_{\infty}\mathcal {E}_1(u_n,u_n)
\rightarrow0\qquad \mbox{as }n\rightarrow\infty.
\]
It follows from Lemma \ref{l5.1.2FOT} that there exists a subsequence
$(n_k)$ such that $\mathbf{P}_x$-a.e. for q.e. $x\in E$,
$N_t^{w_{n_k}}-\int_0^t w_{n_k}(X_s)\,\mathrm{d}s$ converges to $0$ uniformly on compacts.

Besides: $\mu^c_{\langle u_n\rangle }(E)=\tilde{\mathcal{E}}^{(c)}(u_n,u_n)$, which
converges to $0$. Hence, by Lemma \ref{convCAF}, there exists a
subsequence $(n_k)$ such that
\[
\big|\bigl\langle M^{u_{n_k},c},M^{f,c}\bigr\rangle\big|\leq\bigl\langle
M^{f,c}\bigr\rangle^{1/2}\bigl\langle M^{u_{n_k},c}\bigr
\rangle^{1/2}
\]
converges to $0$ on compacts $\mathbf{P}_x$-a.e. for q.e. $x\in E$.
\end{pf}

\subsection{\texorpdfstring{Stochastic integration with respect to $^j\tilde{N}^u$}
{Stochastic integration with respect to jNu}}\label{subsectionjNu}

Denote by $(N,H)$ the L\'{e}vy system of $X$. Let $\hat{X}$ be the
Markov process properly associated to the Dirichlet form $\hat
{\mathcal{E}
}(u,v):=\mathcal{E}(v,u)$, $u,v\in\mathcal{F}$ and $(\hat
{N},H_{\hat{X}})$ its L\'
evy system. Let $\nu_H$ be the Revuz measure associated to $H$ and let
$\nu_{\hat{H}}$ be the Revuz measure associated to $H_{\hat{X}}$ and
$\hat{H}$ be the PCAF of $X$ associated to $\nu_{\hat{H}}$ by the
Revuz correspondence. Let $J$, $\hat{J}$ and $\tilde{J}$ denote
respectively the jumping measure of $\mathcal{E}$, $\hat{\mathcal
{E}}$ and $\tilde{\mathcal{E}
}$, that is, $J(\mathrm{d}y,\mathrm{d}x)=\frac{1}{2}N(x,\mathrm
{d}y)\nu_H(\mathrm{d}x)$,
$\hat{J}(\mathrm{d}y,\mathrm{d}x)=\frac{1}{2}\hat{N}(x,\mathrm
{d}y)\nu_{\hat{H}}(\mathrm{d}
x)$ and $\tilde{J}(\mathrm{d}x,\mathrm{d}y)=\frac{1}{2}[J(\mathrm
{d}x,\mathrm{d}y)+\hat
{J}(\mathrm{d}x,\mathrm{d}y)]$. It is known that $\hat{J}(\mathrm
{d}y,\mathrm{d}x)=J(\mathrm{d}x,\mathrm{d}y)$.

We will use the following notations:\vspace*{-1pt}
\begin{eqnarray*}
\mathbf{N}(\mathrm{d}y,\mathrm{d}s)&:=&N(X_s,\mathrm{d}y)\,\mathrm
{d}H_s,\quad \mbox{and }
\nonumber
\\
\tilde{\mathbf{N}}(\mathrm{d}y,\mathrm{d}s)&:=&
\tfrac
{1}{2}\bigl(N(X_s,\mathrm{d}y)\,\mathrm{d}H_s+
\hat {N}(X_s,\mathrm{d}y)\,\mathrm{d}\hat{H}_s\bigr).
\nonumber
\end{eqnarray*}
For any $u\in\mathcal{F}$, denote by $M^{u,j}$ the jump part of $M^u$ (see
page 213 of \cite{FOT} for the definition), this is an element of $\M
$ and for all $h\in\mathcal{F}$, $e(M^{u,j},M^h)=\tilde{\mathcal
{E}}^{(j)}(u,h)$.
With the same arguments used to show Lemma \ref{liwrtcNu}, we can
obtain the following lemma.

\begin{lema}\label{liwrtjNu}
For every $u$ in $\mathcal{F}$ and $f$ in $\mathcal{F}_b$, there
exists a unique $w$ in
$\mathcal{F}$, such that:
\[
e\bigl(f*M^{u,j},M^h\bigr)=\mathcal{E}_1(w,h)\qquad
\forall h\in\mathcal{F}.
\]
\end{lema}

\begin{defi}\label{diwrtjNu}
For every $u$ in $\mathcal{F}$ and $f$ in $\mathcal{F}_b$, the
stochastic integral of
$f$ with respect to $^j\tilde{N}^u$ denoted by $\int_0^{\cdot}
f(X_s)\,\mathrm{d}^j\tilde{N}^u_s$ or by $f*{}^j\tilde{N}^u$ is defined by:
\begin{eqnarray*}
\int_0^t f(X_s)\,
\mathrm{d}^j\tilde{N}^u_s
&:=&N^w_t-\int_0^t
w(X_s)\,\mathrm{d} s\\
&&{}-\frac{1}{2}\int_0^t
\int_E\bigl[f(x)-f(X_s)\bigr]
\bigl[u(x)-u(X_s)\bigr]\tilde{\mathbf{N} }(\mathrm{d}x,\mathrm{d}s),\qquad t
\geq0,
\end{eqnarray*}
where $w$ is the element of $\mathcal{F}$ associated to $(u,f)$ by
Lemma \ref
{liwrtjNu}.
\end{defi}

\begin{lema}\label{limiwrtjNu}
\mbox{}
\begin{enumerate}[(ii)]
\item[(i)]For $u\in\mathcal{F}$ and $f\in\mathcal{F}_b$, $f*{}^j\tilde{N}^u$
belongs to $\mathcal{N}_c^0$ and for $h$ in $\mathcal{F}_b$:
%
\begin{equation}\label{072607}
\big\langle \Theta\bigl(f*{}^j\tilde{N}^u\bigr),h\big\rangle =\big\langle \Theta
\bigl({}^j\tilde {N}^u\bigr),fh\big\rangle .
\end{equation}
In particular, the integral is well defined in the following sense.
If $u,v$ in $\mathcal{F}$ are such that $^j\tilde{N}^u={}^j\tilde
{N}^v$, then for any $f$ in $\mathcal{F}_b$: $f*{}^j\tilde
{N}^u=f*{}^j\tilde{N}^v$.
\item[(ii)] If $(u_n)$ is $\tilde{\mathcal{E}}_1$-converging to $u$, there
exists a subsequence $(n_k)$ such that for q.e. $x\in E$:
\[
\mathbf{P}_x\bigl(f*{}^j\tilde{N}^{u_{n_k}}
\mbox{ converges to }f*{}^j\tilde{N}^{u}\mbox{
uniformly on any compact}\bigr)=1.
\]

\end{enumerate}
\end{lema}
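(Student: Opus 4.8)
The plan is to mirror exactly the structure of the proof of Lemma \ref{limiwrtcNu}, since Lemma \ref{limiwrtjNu} is the jump-part analogue of the diffusion-part statement and the author has already announced that the construction is ``close to Nakao's.'' For part \textbf{(i)}, I would first verify that $f*\mbox{}^j\tilde{N}^u$ is a genuine element of $\Nc_c^0$. By Definition \ref{diwrtjNu} it is the sum of $N^w-\int_0^\cdot w(X_s)\di s$, which lies in $\Nc_c^0$ by construction, and the integral term $\frac12\int_0^\cdot\int_E[f(x)-f(X_s)][u(x)-u(X_s)]\tilde{\Noo}(\di x,\di s)$. The latter is a CAF of bounded variation whose Revuz measure is finite, because the Cauchy--Schwarz bound against $\tilde{J}$ gives $\int_E\int_E|f(x)-f(y)||u(x)-u(y)|\,\tilde{J}(\di x,\di y)\leq 2\,[\tilde{\E}^{(j)}(f,f)]^{1/2}[\tilde{\E}^{(j)}(u,u)]^{1/2}<\infty$; hence this term also belongs to $\Nc_c^0$, and so does the sum.

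The core of \textbf{(i)} is the identity (\ref{072607}), and here I would compute $<\Theta(f*\mbox{}^j\tilde{N}^u),h>$ directly from the definitions. Applying $\Theta$ to the defining decomposition yields $<\Theta(f*\mbox{}^j\tilde{N}^u),h>=-e(f*M^{u,j},M^h)-\frac12\int_E h(x)\,\di(\text{Revuz measure of the jump integral})$. Using the identity $e(M^{u,j},M^h)=\tilde{\E}^{(j)}(u,h)$ together with the jump analogue of the Leibniz-type formula (\ref{l3.2.5FOT}) for the bilinear jump form, the first term should rewrite as a symmetrized double integral of $f$ and $h$ against $\tilde{J}$, exactly matching what one obtains by expanding $<\Theta(\mbox{}^j\tilde{N}^u),fh>=-\tilde{\E}^{(j)}(u,fh)$ via the explicit Beurling--Deny formula for $\tilde{\E}^{(j)}$. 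The bookkeeping here is the step that requires care: one must expand $\tilde{\E}^{(j)}(u,fh)=\frac12\int\int[u(x)-u(y)][f(x)h(x)-f(y)h(y)]\tilde{J}(\di x,\di y)$, split $f(x)h(x)-f(y)h(y)$ using the product identity, and check that the cross terms recombine precisely into the sum of the energy term and the jump-integral term. I expect this algebraic matching to be the main obstacle, though it is routine in principle. The well-definedness statement then follows immediately from Lemma \ref{T2.2N}, since (\ref{072607}) characterizes $f*\mbox{}^j\tilde{N}^u$ through $\Theta$ and depends on $u$ only through $\mbox{}^j\tilde{N}^u$.

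For part \textbf{(ii)}, I would copy the continuity argument of Lemma \ref{limiwrtcNu}(ii) verbatim. By linearity $f*\mbox{}^j\tilde{N}^{u_n}-f*\mbox{}^j\tilde{N}^u=f*\mbox{}^j\tilde{N}^{u_n-u}$, so it suffices to treat $u_n\to 0$. For the functions $w_n$ associated to $(f,u_n)$ by Lemma \ref{liwrtjNu}, the bound $\E_1(w_n,h)^2=e(f*M^{u_n,j},M^h)^2\leq \|f^2\|_\infty\,\E_1(h,h)\,\E_1(u_n,u_n)$ gives, on setting $h=w_n$, that $\E_1(w_n,w_n)\leq\|f^2\|_\infty\,\E_1(u_n,u_n)\to 0$; Lemma \ref{l5.1.2FOT} then produces a subsequence along which $N^{w_{n_k}}-\int_0^\cdot w_{n_k}(X_s)\di s\to 0$ uniformly on compacts. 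It remains to handle the jump-integral term, and this is where the argument departs slightly from the diffusion case: instead of invoking $\langle M^{u_n,c}\rangle$, I would observe that the total-variation Revuz measure of $\frac12\int_0^\cdot\int_E[f(x)-f(X_s)][u_n(x)-u_n(X_s)]\tilde{\Noo}(\di x,\di s)$ is controlled by $[\tilde{\E}^{(j)}(f,f)]^{1/2}[\tilde{\E}^{(j)}(u_n,u_n)]^{1/2}$, whose total mass tends to $0$. Applying Lemma \ref{convCAF} to this sequence of (signed, hence split into positive and negative parts) PCAFs yields a further subsequence along which the jump-integral term converges to $0$ uniformly on compacts $\Pro_x$-a.e. for q.e.\ $x\in E$, completing the proof.
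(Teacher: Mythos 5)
Your proposal is correct and follows essentially the paper's own proof: for (i) the same direct computation of $<\Theta(f*\mbox{}^j\tilde{N}^u),h>$ as double integrals against $\tilde{J}$, concluded by Lemma \ref{T2.2N} for well-definedness, and for (ii) the same scheme as Lemma \ref{limiwrtcNu}(ii), with Lemma \ref{convCAF} applied to the (total variation of the) jump correction term. The only imprecision is the appeal to a ``jump analogue'' of (\ref{l3.2.5FOT}) -- no exact derivation property holds for the jump part -- but your subsequent product-expansion plan is exactly the paper's computation, which closes because the cross term $[f(x)-f(y)][h(x)-h(y)][u(x)-u(y)]$ is antisymmetric and hence integrates to zero against the symmetric measure $\tilde{J}$.
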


\begin{pf}
The proof of (ii) is similar to the proof of (ii) of Lemma \ref
{limiwrtcNu}. We prove (i). Clearly, $f*{}^j\tilde{N}^u$ belongs
to $\mathcal{N}_c^0$ and for any $h\in\mathcal{F}_b$:
\begin{eqnarray*}
&&\big\langle \Theta\bigl(f*{}^j\tilde{N}^u,h\bigr)\big\rangle
\nonumber
\\
&&\quad =e\bigl(f*M^{u,j},M^h\bigr)-\int_{E\times E\backslash\delta
}h(y)
\bigl[f(x)-f(y)\bigr] \bigl[u(x)-u(y)\bigr]\tilde{J}(\mathrm{d}x,\mathrm {d}y)
\nonumber
\\
&&\quad =-\int_{E\times E\backslash\delta}\bigl[f(y)\bigl\{h(x)-h(y)\bigr\}+h(y)\bigl\{
f(x)-f(y)\bigr\}\bigr]
\bigl[u(x)-u(y)\bigr]\tilde{J}(\mathrm{d}x,\mathrm{d}y),
\nonumber
\end{eqnarray*}
where $\delta:=\{(x,x)\dvt x\in E\}$. Using the symmetry of $\tilde{J}$
and the fact that $J(\mathrm{d}x,\mathrm{d}y)+J(\mathrm{d}y,\mathrm
{d}x)=2\tilde{J}(\mathrm{d}x,\mathrm{d}
y)$, one proves that the right-hand\vadjust{\goodbreak} side of the above equation
coincides with:
\[
-\int_{E\times E\backslash\delta
}\bigl[h(x)f(x)-h(y)f(y)\bigr] \bigl[u(x)-u(y)
\bigr]J(\mathrm{d} x,\mathrm{d}y)=-\tilde{\mathcal{E}}^{(j)}(u,hf).
\]
Then \ref{072607} is consequence of \ref{lcNu}. The second statement
can be shown in the same way that its analogous in Lemma \ref{limiwrtcNu}(i).
\end{pf}

\subsection{Stochastic integration with respect to $N^u$}\label{subsectionNu}

In view of the decomposition (\ref{032907}), we can define the
stochastic integral of $f(X)$ with respect to $N^u$ for $f\in\mathcal{F}_b$
and $u\in\mathcal{F}$ as follows.

\begin{defi}
For any $u\in\mathcal{F}$ and $f\in\mathcal{F}_b$, the stochastic
integral of $f(X)$
with respect to $N^u$ denoted by $f*N^{u}$ or by $\int_0^t
f(X_s)\,\mathrm{d}
N_s^u$ is defined by
\begin{eqnarray*}
\int_0^t f(X_s)\,
\mathrm{d}N_s^u&:=&\int_0^t
f(X_s)\,\mathrm{d}{}^c\tilde {N}^{u^*}_s+
\int_0^t f(X_s)\,\mathrm{d}
{}^j\tilde {N}^{u^*}_s
\nonumber
\\
&&{}-\int_0^t f(X_s)u^*(X_s)
\,\mathrm{d}\tilde{K}_s+\int_0^t
f(X_s) \bigl(u(X_s)-u^*(X_s)\bigr)
\,\mathrm{d}s,
\nonumber
\end{eqnarray*}
where the first two integrals are in the sense of the definitions (\ref
{iwrtcNu}) and (\ref{diwrtjNu}), respectively, and the others integrals
are Lebesgue--Stieltjes integrals.
\end{defi}

It is clear that for any $u$ in $\mathcal{F}$ and $f$ in $\mathcal
{F}_b$, the
stochastic integral $f*N^u$ belongs to $\mathcal{N}_c^0$ and in view
of (\ref
{042607}) and (\ref{072607}), we have
\[
\big\langle \Theta\bigl(f*N^u\bigr),h\big\rangle =\big\langle \Theta\bigl(N^u\bigr),fh\big\rangle =-
\mathcal{E}(u,fh)\qquad \mbox{for all }h\in\mathcal{F}_b.
\nonumber
\]
Let $(u_n)$ be a sequence on $\mathcal{F}$ $\tilde{\mathcal
{E}}_1$-converging to $u\in
\mathcal{F}$, it follows from (\ref{020311}) that $(u_n^*)$ $\tilde
{\mathcal{E}
}_1$-converges to $u^*$ then thanks to Lemma \ref{l5.1.2FOT}, Lemma
\ref{limiwrtcNu}(ii) and Lemma \ref{limiwrtjNu}(ii) we have the
following lemma.

\begin{lema}\label{lema3.11}
Let $f$ be a function in $\mathcal{F}_b$ and $(u_n)$ a sequence on
$\mathcal{F}$
$\tilde{\mathcal{E}}_1$-converging to $u\in\mathcal{F}$. Then there
exists a
subsequence $(n_k)$ such that for q.e. $x\in E$:
\[
\mathbf{P}_x\bigl(f*N^{u_{n_k}}\mbox{ converges uniformly on
any compact to }f*N^u\bigr)=1.
\]
\end{lema}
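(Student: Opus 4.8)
The plan is to compare $f*N^{u_n}$ with $f*N^u$ term by term, using the four-part expression that defines these integrals. Writing out the definition of $f*N^{u_n}$ (built from $u_n^*$) against that of $f*N^u$ (built from $u^*$), the difference $f*N^{u_n}-f*N^u$ splits into the sum of four contributions: (a) $f*{}^{c}\tilde{N}^{u_n^*}-f*{}^{c}\tilde{N}^{u^*}$, (b) $f*{}^{j}\tilde{N}^{u_n^*}-f*{}^{j}\tilde{N}^{u^*}$, (c) $-\int_0^\cdot f(X_s)\bigl(u_n^*(X_s)-u^*(X_s)\bigr)\di\tilde{K}_s$, and (d) $\int_0^\cdot f(X_s)\bigl[(u_n-u_n^*)(X_s)-(u-u^*)(X_s)\bigr]\di s$. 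I would first record the preliminary fact already announced before the statement: the map $u\mapsto u^*$ defined by (\ref{eq2}) is linear and, by the sector condition (\ref{020311}), bounded for $\tilde{\E}_1$ (recall $\tilde{\E}_1(v,v)=\E_1(v,v)$), so the convergence $u_n\to u$ in $\tilde{\E}_1$ forces $u_n^*\to u^*$ in $\tilde{\E}_1$ as well. This is exactly what permits invoking the earlier lemmas for the starred functions.

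For the two genuine stochastic-integral pieces (a) and (b), I would apply Lemma \ref{limiwrtcNu}.(ii) and Lemma \ref{limiwrtjNu}.(ii) to the sequence $(u_n^*)$: each furnishes a subsequence along which the corresponding integral converges to its limit uniformly on every compact, $\Pro_x$-a.e.\ for q.e.\ $x$. Passing to a subsequence of a subsequence, I obtain one subsequence that handles both (a) and (b) simultaneously.

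For the Lebesgue--Stieltjes pieces (c) and (d) I would refine once more with Lemma \ref{l5.1.2FOT}: applied to $u_n^*\to u^*$ it yields a further subsequence along which $u_{n_k}^*(X_\cdot)\to u^*(X_\cdot)$ uniformly on compacts, and applied to $u_n\to u$ it gives uniform convergence of $u_{n_k}(X_\cdot)$; hence $(u_{n_k}-u_{n_k}^*)(X_\cdot)\to(u-u^*)(X_\cdot)$ uniformly on compacts too. Since $f$ is bounded, on any $[0,T]$ piece (d) is dominated by $\|f\|_\infty\,T\,\sup_{s\le T}\bigl|(u_{n_k}-u_{n_k}^*)(X_s)-(u-u^*)(X_s)\bigr|\to 0$, and piece (c) by $\|f\|_\infty\,\tilde{K}_T\,\sup_{s\le T}\bigl|u_{n_k}^*(X_s)-u^*(X_s)\bigr|$. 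Collecting the finitely many nested subsequences into a single final one, and taking the (countable, hence exceptional) union of the q.e.\ exceptional sets produced at each step, gives a subsequence along which all four contributions, and therefore $f*N^{u_{n_k}}$ itself, converge uniformly on compacts to $f*N^u$.

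The step demanding the most care is piece (c), the killing term. One cannot control it through Lemma \ref{convCAF}, because the total mass $\tilde{k}(E)$ of the killing measure of $\tilde{\E}$ may be infinite, so the Revuz masses $\int_E|u_n^*-u^*|\,\di\tilde{k}$ need not tend to $0$; the finite-energy bound of the killing measure only controls the $L^2(\tilde{k})$ quantity $\int_E(u_n^*-u^*)^2\,\di\tilde{k}\le\E_1(u_n^*-u^*,u_n^*-u^*)\to 0$, which is not enough on its own. The resolution is that $\tilde{K}$ is a genuine (finite) PCAF, whence $\tilde{K}_T<\infty$ $\Pro_x$-a.s.\ for q.e.\ $x$; combined with the uniform convergence $u_{n_k}^*(X_\cdot)\to u^*(X_\cdot)$ supplied by Lemma \ref{l5.1.2FOT}, this closes the pathwise estimate for (c) displayed above.
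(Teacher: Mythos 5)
Your proposal is correct and takes essentially the same route as the paper, whose proof is a one-line remark preceding the statement: it deduces $u_n^*\to u^*$ in $\tilde{\E}_1$ from the sector-condition bound (\ref{020311}) and then invokes Lemma \ref{l5.1.2FOT}, Lemma \ref{limiwrtcNu}.(ii) and Lemma \ref{limiwrtjNu}.(ii) on the four terms of the defining decomposition of $f*N^u$, exactly as you do. Your explicit pathwise handling of the killing term (via finiteness of $\tilde{K}$ and uniform convergence of $u_{n_k}^*(X_\cdot)$ from Lemma \ref{l5.1.2FOT}, rather than Lemma \ref{convCAF}) simply spells out a detail the paper leaves implicit.
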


Let $A$ be the CAF defined by $A_t:=N_t^u-\int_0^t u(X_s)\,\mathrm
{d}s$, for
an element $u$ of $\mathcal{F}$ and let $f$ be a function in $\mathcal
{F}_b$, the
stochastic integral of $f(X)$ with respect to $A$ is defined by:
\[
f*A_t=\int_0^tf(X_s)
\,\mathrm{d}A_s:=\int_0^t
f(X_s)\,\mathrm {d}N_s^u-\int
_0^t f(X_s)u(X_s)
\,\mathrm{d}s.
\]

\begin{lema}\label{011211}
Let $u$ and $v$ be elements of $\mathcal{F}$, $f$ and $g$ elements of
$\mathcal{F}_b$
and $G$ a nearly Borel finely open set. Set $A_t:=N_t^u-\int_0^t
u(X_s)\,\mathrm{d}s$ and $B_t:=N_t^v-\int_0^t v(X_s)\,\mathrm{d}s$.
Suppose that
$f(x)=g(x)$ for q.e. $x\in G$ and $\mathbf{P}_x (A_t=B_t,\mbox{ for
any }t<\sigma_{E\setminus G} )=1$ for q.e. $x\in E$.
Then $\mathbf{P}_x (f*A_t=\allowbreak g*B_t,\mbox{ for any }t<\sigma_{E\setminus
G} )=1$ for q.e. $x\in E$.
\end{lema}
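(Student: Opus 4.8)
The plan is to translate the pathwise statement on the stochastic interval $\LL 0,\sigma_{E\setminus G}\LL$ into an equality of the linear functionals $\Theta$ on $\F_{b,G}$, which is exactly the content of Lemma \ref{T2.2N}. First I would check that all four processes involved lie in $\Nc_c^0$. Since $u,v\in\F\subset L^2(E,m)$, for any $h\in\F_b$ one has $\int_E|hu|\,\di m\leq\|h\|_{L^2}\|u\|_{L^2}<\infty$, so the Revuz measures $u\cdot m$ and $(fu)\cdot m$ (and the analogous ones for $v$) integrate every element of $\F_b$; hence the Lebesgue--Stieltjes parts $\int_0^\cdot u(X_s)\,\di s$ and $\int_0^\cdot f(X_s)u(X_s)\,\di s$ belong to $\Nc_c^0$. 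Together with $f*N^u\in\Nc_c^0$, this gives $A,B,f*A,g*B\in\Nc_c^0$, so Lemma \ref{T2.2N} is applicable to each relevant pair.

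Next I would evaluate these functionals against a test function $h\in\F_{b,G}$. Using Definition \ref{NcF*} on the Lebesgue part and the identity $<\Theta(f*N^u),h>=<\Theta(N^u),fh>=-\E(u,fh)$ already established for $f*N^u$, I obtain $<\Theta(f*A),h>=-\E(u,fh)-(u,fh)=-\E_1(u,fh)$ and, symmetrically, $<\Theta(g*B),h>=-\E_1(v,gh)$. The same computation applied to $A$ and $B$ themselves yields $<\Theta(A),h>=-\E_1(u,h)$ and $<\Theta(B),h>=-\E_1(v,h)$. Consequently, by the forward direction of Lemma \ref{T2.2N}, the hypothesis $\Pro_x(A_t=B_t,\ t<\sigma_{E\setminus G})=1$ for q.e.\ $x$ is equivalent to $\E_1(u-v,h)=0$ for every $h\in\F_{b,G}$.

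It then remains to combine these ingredients. Because $f=g$ q.e.\ on $G$ while $h$ vanishes q.e.\ off $G$, the products satisfy $fh=gh$ q.e.\ on $E$, so they are the same element of $\F$ and $\E_1(u,fh)=\E_1(u,gh)$. Moreover $gh\in\F_{b,G}$, since $\F_b$ is an algebra and $gh$ vanishes off $G$; substituting $gh$ for $h$ in the relation $\E_1(u-v,\cdot)=0$ found above gives $\E_1(u,gh)=\E_1(v,gh)$. Chaining these equalities produces $<\Theta(f*A),h>=-\E_1(u,fh)=-\E_1(u,gh)=-\E_1(v,gh)=<\Theta(g*B),h>$ for all $h\in\F_{b,G}$, whereupon the converse direction of Lemma \ref{T2.2N} yields $\Pro_x(f*A_t=g*B_t,\ t<\sigma_{E\setminus G})=1$ for q.e.\ $x$. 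I expect the only genuinely delicate points to be verifying that the Lebesgue--Stieltjes terms really belong to $\Nc_c^0$, so that $\Theta$ is defined on them, and noticing that the correct test function to feed the hypothesis is the product $gh$ rather than $h$; once these are settled the proof is a routine matching of the $\Theta$ functionals.
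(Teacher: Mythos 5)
Your proof is correct and takes essentially the same route as the paper: both reduce the pathwise statement to an equality of the functionals $\Theta$ on $\F_{b,G}$ via the two directions of Lemma \ref{T2.2N}, using the product identity $<\Theta(f*A),h>=<\Theta(A),fh>$ together with the key observation that $fh=gh$ q.e.\ and that this product is an admissible test function in $\F_{b,G}$. Your explicit evaluation of the functionals as $-\E_1(u,fh)$ and $-\E_1(v,gh)$, and your check that $A$, $B$, $f*A$, $g*B$ belong to $\Nc_c^0$, simply unwind steps that the paper's one-line chain $<\Theta(f*A),h>=<\Theta(A),fh>=<\Theta(B),gh>=<\Theta(g*B),h>$ leaves implicit.
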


\begin{pf}
It follows from Lemma \ref{T2.2N} that for any $h\in\mathcal{F}_G$
$\langle \Theta
(A),h\rangle =\langle \Theta(B),h\rangle $ then $\langle \Theta(f*A),h\rangle =\langle \Theta(A),fh\rangle =\langle \Theta
(B),gh\rangle =\langle \Theta(g*B),h\rangle $. We conclude thanks to Lemma \ref{T2.2N}.
\end{pf}

A function $f$ belongs to $\mathcal{F}_{\mathit{loc}}$ if there exists a sequence
$(f_n)$ of $\mathcal{F}$ and a nest of nearly Borel finely open sets $(G_n)$
such that $f(x)=f_n(x)$ for q.e. $x\in G_n$. In fact the sequence
$(f_n)$ can be taken in $\mathcal{F}_b$ (see Lemma 3.1 in \cite{CFKZ}).

With the above lemma and Theorem \ref{teore1.1walsh}, we can define
the stochastic integral of $f(X_s)$ with respect to $C$ for any $f\in
\mathcal{F}_{\mathit{loc}}$ and $C$ in $\mathcal{N}_{f\mbox{-}\mathit{loc}}$.

\begin{defi}\label{intsto}
Let $C$ be an element of $\mathcal{N}_{f\mbox{-}\mathit{loc}}$ and $f$ in
$\mathcal{F}_{\mathit{loc}}$.
Let $(G_n)$ and $(u_n)$ be the sequences of the conclusion of Theorem
\ref{teore1.1walsh} and $(f_n)\subset\mathcal{F}_b$ such that $f(x)=f_n(x)$
for q.e. $x\in G_n$. Set $C^n_t:=N_t^{u_n}-\int_0^t u_n(X_s)\,\mathrm{d}s$.
Then if $\sigma:=\lim_{n\rightarrow\infty}\sigma_{E\setminus
G_n}$, we define the stochastic integral of $f$ with respect to $A$ and
denoted by $f*C_t,t\geq0$ or by $\int_0^tf(X_s)\,\mathrm{d}C_s,t\geq
0$ as
the following local CAF:
\[
\label{sig} f*C_t:= \cases{ %
f_n*C^n_t&\quad  for $t<
\sigma_{E\setminus G_n}$,
\cr
0&\quad  for $t\geq\sigma$. }
\]
\end{defi}

\begin{nota}\label{nota101}
\begin{enumerate}[(iii)]
\item[(i)] It follows from Lemma \ref{011211} that the above
definition makes sense and not depend of the sequences $C^{n}$, $(f_n)$
nor $(G_n)$.
\item[(ii)] Any PCAF belongs to $\mathcal{N}_{f\mbox{-}\mathit{loc}}$, then
with the
notation of the above definition, $f*C$ belongs to $(\mathcal
{N}_{f\mbox
{-}\mathit{loc}})_{f\mbox{-}\mathit{loc}}=\mathcal{N}_{f\mbox{-}\mathit{loc}}$.
\item[(iii)] Let $\varphi\dvtx \mathbb{R}\rightarrow\mathbb{R}$ be a
function admitting
a continuous derivative. For any $n\in\mathbb{N}$ let $\varphi_n$ be a
function admitting a bounded continuous derivative such that $\varphi_n(x)=\varphi(x)$ if $|x|<n$. We know that $\varphi_n(u)-\varphi(0)$
belongs to $\mathcal{F}$ for any $u\in\mathcal{F}$ and if we set
$G_n:=\{x\dvt |u(x)|<n\}
$, $(G_n)$ is a nest of finely open sets. Since $u$ is quasi continuous
in the strict sense (i.e., $u(X_t)\rightarrow u(X_{\zeta-})\in
\mathbb{R}$
as $t\uparrow\zeta$), $\sigma_{E\setminus G_n}\uparrow\infty$
$\mathbf{P}_x$-a.s. for q.e. $x\in E$. Therefore for any $v\in
\mathcal{F}$ the
stochastic integral $\varphi(u)*N^v=[\varphi(u)-\varphi
(0)]*N^v+\varphi(0)N^v$ is a finite CAF. This hold also for $\varphi
(u_1,\ldots,u_k)*N^v$ for any $u_1,\ldots,u_k$ and $v$ in $\mathcal
{F}$ and
$\varphi\in C^1(\mathbb{R}^k)$.
\end{enumerate}
\end{nota}

\section{\texorpdfstring{Proof of Theorem \protect\ref{integralestocastica}}{Proof of Theorem 1.1}}\label{prueba1}

In this section, we show that for $f\in\mathcal{F}_{\mathit{loc}}$ and $C\in
\mathcal{N}_{f\mbox{-}\mathit{loc}}$, the additive functional $f*C$ built in the precedent
section satisfies the conclusion of Theorem \ref{integralestocastica}.

Without loss of generality, we take in this section $\Omega$ to be the
canonical path space $D([0,\infty)\rightarrow\infty)\rightarrow
E_{\partial}$ of c\`{a}dl\`{a}g functions from $[0,\infty)$ to
$E_{\partial}$ for which $w(t)=\partial$ for all $t\geq\zeta(\omega
):=\inf\{s\geq0\dvt w(s)=\partial\}$.

Given $\omega\in\{\omega\in\Omega\dvt t<\zeta(\omega)\}$, the
operator $r_t$ is defined by:
\[
r_t(\omega) (s):= \cases{ %
 \omega
\bigl((t-s)-\bigr)& \quad if $0\leq s< t$,
\cr
\omega(0)&\quad  if $s\geq t$. }
\]
We denote by $\{\hat{P}_x,x\in E\}$ the law of $\hat{X}$, the dual
process of $X$. The following lemma can be established using the same
arguments as Lemma 5.7.1 in \cite{FOT}.

\begin{lema}\label{lema012111}
For positive $t$ and every $\mathcal{F}_t$-measurable set $\Gamma$,
\[
P_m\bigl(r_t^{-1}\Gamma;t<\zeta\bigr)=
\hat{P}_m(\Gamma,t<\zeta).
\]
\end{lema}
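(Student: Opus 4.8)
The plan is to reduce the identity to a generating $\pi$-system of events and then verify it by an explicit computation with the transition functions, transferring $X$ to $\hat X$ through the $m$-duality of their semigroups. Since both sides of the claimed identity are finite measures on $\F_t\cap\{t<\zeta\}$, by the monotone class theorem it suffices to check equality on a $\pi$-system generating $\F_t$. I would take the cylinder events
$$\Gamma=\{X_{s_1}\in A_1,\ldots,X_{s_n}\in A_n\},\qquad 0\le s_1<\cdots<s_n<t,$$
with $A_1,\ldots,A_n$ Borel subsets of $E$; equivalently, I would test the identity against bounded functions $g(\omega)=\prod_{i=1}^n f_i(\omega(s_i))$. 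These generate $\F_{t-}$, which after a standard limiting argument suffices on $\{t<\zeta\}$.

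The core is then a direct computation. By definition of $r_t$ one has $g(r_t\omega)=\prod_{i=1}^n f_i\bigl(X_{(t-s_i)-}\bigr)$, and since for each fixed $u$ the identity $X_{u-}=X_u$ holds $\Pro_x$-a.e. for q.e. $x$ (a fixed time is a.e. not a jump time), the left limits may be replaced by the values $X_{t-s_i}$ up to a $P_m$-null set. Writing $u_i:=t-s_i$, so that $t>u_1>\cdots>u_n>0$, the Markov property expresses the left-hand side as
$$P_m(r_t^{-1}\Gamma;t<\zeta)=\int_E m(\di x_0)\,p_{u_n}(x_0,\di y_n)f_n(y_n)\cdots p_{u_1-u_2}(y_2,\di y_1)f_1(y_1)\,p_{t-u_1}(y_1,E),$$
a chain of integrals against the sub-Markovian transition kernels $p_s$ of $X$, in which the final factor $p_{t-u_1}(y_1,E)=P_{y_1}(t-u_1<\zeta)$ carries the lifetime condition.

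I would then apply the $m$-duality of the semigroups, $(P_s\varphi,\psi)_m=(\varphi,\hat P_s\psi)_m$, repeatedly, moving the integration against $m$ from the left endpoint of the chain to the right endpoint. Each application converts a kernel $p_s$ of $X$ into the kernel $\hat p_s$ of $\hat X$ and reverses the order of the integration variables, so that the chain is transformed into the analogous chain for $\hat X$. The outcome is exactly $\hat\Es_m\bigl[\prod_{i=1}^n f_i(\hat X_{s_i});t<\zeta\bigr]=\hat P_m(\Gamma;t<\zeta)$, which is the desired equality on the $\pi$-system; the monotone class theorem extends it to all $\F_t$-measurable $\Gamma$.

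The main obstacle will be the careful bookkeeping of the lifetime factor $\{t<\zeta\}$: because $p_s$ and $\hat p_s$ are only sub-Markovian, the mass defect produced by killing must be matched on the two sides, and this is precisely what the $m$-duality of the \emph{full} sub-Markovian semigroups (and not merely of conservative ones) guarantees. A secondary, routine technical point is the passage from the left limits $X_{(t-s_i)-}$ to $X_{t-s_i}$ and the extension from cylinder sets to general $\Gamma$, both handled by the absence of fixed-time discontinuities and the monotone class argument.
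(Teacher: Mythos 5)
Your proposal is correct and follows essentially the same route as the paper, which proves this lemma by invoking the arguments of Lemma 5.7.1 in \cite{FOT}: reduction to cylinder sets $\{X_{s_1}\in A_1,\ldots,X_{s_n}\in A_n\}$ by a monotone class argument, replacement of the left limits $X_{(t-s_i)-}$ by $X_{t-s_i}$ (no fixed discontinuities for a Hunt process), and the chain computation with transition kernels, the only change from the symmetric case being that the symmetry $p_s(x,\di y)m(\di x)=p_s(y,\di x)m(\di y)$ is replaced by the duality $p_s(x,\di y)m(\di x)=\hat{p}_s(y,\di x)m(\di y)$, which is exactly how you transfer the chain (lifetime factors included) to $\hat{P}_m$.
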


\begin{lema}\label{01221111}
For any $u$ in $\mathcal{F}$, there exists a unique $\hat{u}\in
\mathcal{F}$ such that
$\mathcal{E}_1(\hat{u},h)=\mathcal{E}_1(h,u)$ for any $h$ in
$\mathcal{F}$. If we set:
\begin{eqnarray*}
\hat{N}_t^u&:=&N_t^{\hat{u}}+\int
_0^t \bigl(u(X_s)-
\hat{u}(X_s)\bigr)\,\mathrm{d} s,\qquad  t\geq0,
\nonumber
\\
\hat{M}_t^u&:=&u(X_t)-u(X_0)-
\hat{N}_t^u,
\nonumber
\end{eqnarray*}
then under $(\hat{P}_x,x\in E)$, $\hat{N}^u$ and $\hat{M}^u$ are,
respectively, the CAF of zero energy and the MAF of finite energy of the
Fukushima decomposition for $u(X_t)-u(X_0)$, $t\geq0$.
\end{lema}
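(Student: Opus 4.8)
The plan is to treat the two assertions separately. First I would establish the existence and uniqueness of $\hat u$. The map $h\mapsto \E_1(h,u)$ is linear, and by the sector condition $|\E_1(h,u)|\leq K\,\E_1(h,h)^{1/2}\E_1(u,u)^{1/2}$, so it is a continuous linear functional on $\F$ (recall that $\tilde{\E}_1$ and $\E_1$ are equivalent norms on $\F$). Fact (4) of Section~\ref{sectionsi} then yields a unique $\hat u\in\F$ with $\E_1(\hat u,h)=\E_1(h,u)$ for all $h\in\F$. Equivalently $\E_1(\hat u,h)=\hat{\E}_1(u,h)$, so $\hat u$ is the $\E_1$-representative of the dual form evaluated at $u$.

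For the decomposition, note that $\hat{\E}$ is again a regular Dirichlet form with associated Hunt process $\hat X$, whose law is $\hat P_x$; under $\hat P_x$ the coordinate process is $\hat X$. Fukushima's decomposition holds for $\hat{\E}$ and it is unique. Hence it suffices to prove that, under $\hat P_x$ for q.e.\ $x$, the process $\hat N^u$ is a CAF of zero energy relative to $\hat X$ and $\hat M^u_t:=u(X_t)-u(X_0)-\hat N^u_t$ is a MAF of finite energy relative to $\hat X$. Continuity of $\hat N^u$ is immediate, since $N^{\hat u}$ is a continuous AF and $\int_0^\cdot(u-\hat u)(X_s)\,\di s$ is absolutely continuous; so the whole content is the martingale property and the vanishing of the energy, both computed under $\hat P$.

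The bridge to the known forward objects is time reversal. Starting from the Fukushima decomposition of $\hat u$ under $P_x$, namely $\hat u(X_t)-\hat u(X_0)=M^{\hat u}_t+N^{\hat u}_t$, I would apply the reversal operator $r_t$ and use Lemma~\ref{lema012111} to rewrite expectations under $\hat P_m$ on $\{t<\zeta\}$ as expectations under $P_m$ of the $r_t$-composed functionals. The reversed forward martingale $M^{\hat u}$ produces, up to the boundary term $\hat u(X_0)-\hat u(X_t)$, the dual martingale, and matching terms identifies $\hat M^u$ as a $\hat P$-martingale additive functional of finite energy, while the drift correction $\int_0^\cdot(u-\hat u)(X_s)\,\di s$ is precisely what turns $N^{\hat u}$ into a zero-energy CAF for $\hat X$. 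As a consistency check, for $u$ in the domain of the generator $L$ (with adjoint $L^*$ generating $\hat X$) the defining identity reads $(1-L)\hat u=(1-L^*)u$, so $L\hat u+u-\hat u=L^*u$ and therefore $\hat N^u_t=N^{\hat u}_t+\int_0^t(u-\hat u)(X_s)\,\di s=\int_0^t(L^*u)(X_s)\,\di s$, which is exactly the zero-energy part of $u$ for $\hat X$; the general case then follows by $\tilde{\E}_1$-approximation together with the uniform-convergence statement of Lemma~\ref{l5.1.2FOT} applied to $\hat X$.

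The main obstacle is the verification under $\hat P$ rather than $P$: every building block ($N^{\hat u}$, $M^{\hat u}$, and the energy functional $e$) is defined through the forward process, so one cannot directly read off the martingale or zero-energy properties with respect to $\hat X$. Time reversal is the only available tool, and its use forces careful bookkeeping of the boundary contributions at the terminal time $t$ (the values $u(X_0)$ and $\hat u(X_t)$) and of the passage between the forward filtration and the reversed one. Controlling these terms, and showing that the reversed forward energy of $N^{\hat u}$ together with the bounded-variation correction yields zero $\hat P$-energy, is the delicate heart of the argument.
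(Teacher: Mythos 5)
Your first half is fine, and in fact cleaner than the paper's: the existence and uniqueness of $\hat u$ does follow at once from the sector condition and fact (4) of Section \ref{sectionsi} (note only that $\E_1$ is not itself a norm; what you use is $\E_1(h,h)=\tilde{\E}_1(h,h)$). The paper instead \emph{constructs} $\hat u$ as the $\tilde{\E}_1$-limit of $R_1\hat f_n$ with $\hat f_n:=n(u-n\hat R_{n+1}u)$, and this is not a detour: that explicit approximating sequence is the engine of the second half of the proof.

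The genuine gap is in your main route for the identification, which is circular. Lemma \ref{lema012111} only identifies the \emph{laws} of reversed paths; to ``match terms'' after composing with $r_T$ you must know how $N^{\hat u}$ and $M^{\hat u}$ transform pathwise under $r_T$, and those reversal formulas are precisely Lemma \ref{lema22211} and Remark \ref{nota29}, which the paper \emph{derives from} Lemma \ref{01221111} (indeed the proof of Lemma \ref{lema22211} starts by invoking the approximation set up in the proof of Lemma \ref{01221111}). There is a second, related difficulty you pass over: $N^{\hat u}$ and $M^{\hat u}$ are canonically defined only up to $P$-null sets, so even the assertion that a particular version behaves well $\hat P$-a.e.\ needs an argument; $P$-a.e.\ identities do not transfer to $\hat P$-a.e.\ identities for free. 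What actually works is your ``consistency check'' promoted to the whole proof, and this is exactly what the paper does: take $u_n:=\hat R_1\hat f_n=n\hat R_{n+1}u$ (so $u_n\to u$ in $\tilde{\E}_1$ and, by the sector condition, $\widehat{u_n}=R_1\hat f_n\to\hat u$ in $\tilde{\E}_1$); for these resolvent elements both relevant zero-energy parts are the \emph{same explicit, pathwise-defined} functionals, namely $\int_0^t(\hat R_1\hat f_n-\hat f_n)(X_s)\,\di s$ under $\hat P$ and $\int_0^t(R_1\hat f_n-\hat f_n)(X_s)\,\di s$ under $P$, so no transfer between the two laws is ever needed; then one passes to the limit using Lemma \ref{l5.1.2FOT} applied \emph{both} under $P$ and under $\hat P$, which is legitimate because $\E$ and $\hat\E$ share the same symmetric part, so $\tilde{\E}_1$-convergence is a hypothesis common to the two processes. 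Your sketch states ``the general case follows by $\tilde{\E}_1$-approximation'' but omits exactly this point: the convergence $N^{\widehat{u_n}}\to N^{\hat u}$ is a priori only a $P$-a.e.\ statement, while the lemma requires a $\hat P$-a.e.\ conclusion, and the argument closes only because the approximants are absolutely continuous functionals whose limits can be identified simultaneously under the two measures.
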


\begin{pf}
For any $n\in\mathbb{N}$ set $\hat{f}_n:=n(u-n\hat{R}_{n+1}u)$. The
constant $K$ was introduced in (\ref{020311}). For any $h$ in
$\mathcal{F}$
and $n,m$ in $\mathbb{N}$:
\begin{eqnarray*}
\mathcal{E}_1\bigl(R_1(\hat{f}_n-
\hat{f}_m),h\bigr)&=&\mathcal{E}_1\bigl(h,\hat
{R}_1(\hat{f}_n-\hat {f}_m)\bigr)\\
&\leq& K
\bigl(\mathcal{E}_1(h,h)\bigr)^{1/2}\bigl(
\mathcal{E}_1\bigl(\hat{R}_1(\hat {f}_n-\hat
{f}_m),\hat{R}_1(\hat{f}_n-
\hat{f}_m)\bigr)\bigr)^{1/2}.
\end{eqnarray*}
In particular, if $h=R_1(\hat{f}_n-\hat{f}_m)$ we obtain:
\[
\mathcal{E}_1\bigl(R_1(\hat{f}_n-
\hat{f}_m),R_1(\hat{f}_n-\hat
{f}_m)\bigr)\leq K^2 \mathcal{E}_1\bigl(
\hat{R}_1(\hat{f}_n-\hat{f}_m),
\hat{R}_1(\hat{f}_n-\hat{f}_m)\bigr).
\]
It is known that the right-hand side of the above equation tends to $0$
as $n,m$ tends to infinity (see Theorem I.2.13 in \cite{MaR}) then
there exists $\hat{u}$ in $\mathcal{F}$ such that $R_1 \hat{f}_n$ converges
to $\hat{u}$ with respect to the $\tilde{\mathcal{E}}_1$-norm.
Besides, for
any $h$ in $\mathcal{F}\dvt\mathcal{E}_1(\hat{u},h)=\lim\mathcal
{E}_1(R_1\hat{f}_n,h)=\lim
\mathcal{E}_1(h,\hat{R}_1\hat{f}_n)=\mathcal{E}_1(h,u)$.

Let $A_t$ be the CAF of zero energy of the Fukusmima decomposition of
$u(X_t)-u(X_0)$ with respect to $\hat{P}_x,x\in E$. By taking a
subsequence if necessary, we have $\hat{P}_x$-a.e. for q.e. $x\in E$:
For all $t\geq0$
\begin{eqnarray*}
A_t&=&\lim_{n\rightarrow\infty}\int_0^t
\bigl[u(X_s)-\hat {f}_n(X_s)\bigr]\,\mathrm{d}
s
\nonumber
\\
&=&\lim_{n\rightarrow\infty}\int_0^t \bigl[
\hat{u}(X_s)-\hat {f}_n(X_s)\bigr]
\,\mathrm{d}s+\int_0^t\bigl[u(X_s)-
\hat{u}(X_s)\bigr]\,\mathrm {d}s
\nonumber
\\
&=&\hat{N}_t^u.
\nonumber
\end{eqnarray*}
\upqed\end{pf}

Clearly, $\hat{N}^u$ belongs to $\mathcal{N}_c^0$ and:
%
\begin{equation}\label{022522}
\big\langle \Theta\bigl(\hat{N}^u\bigr),h\big\rangle =-\mathcal{E}(h,u),\qquad  h\in
\mathcal {F}.
\end{equation}

\begin{lema}\label{minuit}
Let $\hat{A}$ be a PCAF with respect to $(\hat{P}_x,x\in E)$ and with
Revuz measure $\mu$. Then under $(P_x,x\in E)$, $\hat{A}$ is the PCAF
with Revuz measure $\mu$.\vadjust{\goodbreak}
\end{lema}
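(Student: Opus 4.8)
The plan is to verify directly the two defining features of a Revuz pair, now read along the forward process: that the functional $\hat{A}$ is a positive continuous additive functional under $(P_x,x\in E)$, and that its Revuz measure computed with respect to $X$ is again $\mu$. The structural part is the soft one. Continuity, the additivity $\hat{A}_{t+s}=\hat{A}_s+\hat{A}_t\comp\theta_s$ and monotonicity are pathwise properties, so taking $\hat{A}$ perfect they hold for every $\omega$ and in particular $P_x$-a.e.; moreover $\E$ and $\hat{\E}$ have the same symmetric part $\tilde{\E}$, hence define the same capacity and the same exceptional sets, so ``q.e.'' is unambiguous. Finiteness of $\hat{A}$ on $[0,\zeta)$ under $P_x$ will come for free from the Revuz identity established below, so the whole matter reduces to identifying the Revuz measure.

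For the Revuz measure I would use the defining limit: $\hat{A}$ has Revuz measure $\mu$ with respect to $X$ iff $\lim_{t\to0}\frac1t\Es_m[\int_0^t f(X_s)\,\di\hat{A}_s]=\int_E f\,\di\mu$ for nonnegative Borel $f$. By hypothesis $\mu$ is the Revuz measure of $\hat{A}$ with respect to $\hat{X}$, so we already know $\lim_{t\to0}\frac1t\hat{\Es}_m[\int_0^t f(\hat{X}_s)\,\di\hat{A}_s]=\int_E f\,\di\mu$. Thus everything reduces to the single identity
$$\Es_m\Big[\int_0^t f(X_s)\,\di\hat{A}_s;\,t<\zeta\Big]=\hat{\Es}_m\Big[\int_0^t f(\hat{X}_s)\,\di\hat{A}_s;\,t<\zeta\Big],$$
after which one divides by $t$, lets $t\to0$, and observes that the truncation $\{t<\zeta\}$ is harmless since $P_m(\zeta\le t)\to0$.

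To prove this identity I would apply the time-reversal Lemma \ref{lema012111}. Setting $G:=\int_0^t f(X_s)\,\di\hat{A}_s$, which is $\F_t$-measurable, Lemma \ref{lema012111} in the form $\Es_m[G;t<\zeta]=\hat{\Es}_m[G\comp r_t;t<\zeta]$ (with $r_t$ an involution up to left limits) transfers the computation to the dual process. It then remains to evaluate $G\comp r_t$: on $\{t<\zeta\}$ one has $X_s(r_t\omega)=\omega((t-s)-)$, while the continuous additive functional $\hat{A}$ read along the reversed path carries, as its Stieltjes measure on $[0,t]$, the image of $\di\hat{A}$ under $u\mapsto t-u$, that is $\hat{A}_s(r_t\omega)=\hat{A}_t(\omega)-\hat{A}_{(t-s)-}(\omega)$. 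Substituting $u=t-s$ and using the continuity of $\hat{A}$ (so that the left limits and the countably many jump times of the path do not contribute) turns $G\comp r_t$ into $\int_0^t f(\hat{X}_u)\,\di\hat{A}_u$, which is exactly the desired identity.

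The hard part is this last pathwise reversal identity for the additive functional, namely that evaluating the CAF $\hat{A}$ along $r_t\omega$ reverses its Stieltjes measure. For occupation-type functionals $\int_0^\cdot g(X_s)\,\di s$ it is a plain change of variables, and in general it is the content of the time-reversal theory underlying the Revuz correspondence (cf.\ Section~5.7 of \cite{FOT}); the care required is to take $\hat{A}$ perfect so that the identity holds for all $\omega$, and to check that $\hat{A}$ charges neither the jump times of the path nor, in the limit $t\to0$, the event $\{t\ge\zeta\}$.
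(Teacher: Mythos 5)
Your proof has a genuine gap, and it sits precisely at the two points you wave at. First, the ``soft structural part'' is not soft: $P_x$ and $\hat{P}_x$ are in general mutually singular measures on path space (already for uniform translation to the right and its dual, uniform translation to the left), so a functional that is defined, additive and continuous only $\hat{P}$-a.s.\ on a $\hat{P}$-defining set has no automatic meaning $P$-a.s.; perfection does not cure this, since the defining set still carries only full $\hat{P}$-measure, not full $P$-measure. Giving a meaning to ``$\hat{A}$ under $P$'' is not a formality --- it is the actual content of the lemma. Second, and more seriously, the pathwise reversal identity $\hat{A}_s\circ r_t=\hat{A}_t-\hat{A}_{t-s}$ on which your whole computation rests cannot be cited: Section 5.7 of \cite{FOT} treats symmetric processes, where $P_x=\hat{P}_x$ and the present lemma is vacuous. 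In this paper that identity is Remark \ref{nota29}, which is established \emph{after} the present lemma and rests on representing CAFs through zero-energy functionals of potentials (Lemma \ref{lema22211}, Theorem \ref{teore1.1walsh}); moreover, even \emph{stating} your identity presupposes that $\hat{A}(\omega)$ makes sense for $P_m$-typical $\omega$, i.e.\ presupposes the conclusion. So within this paper's framework the argument is circular; in the general weak-duality literature the time reversal of PCAFs is a theorem of depth comparable to the lemma itself, not an off-the-shelf fact.

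The paper's proof avoids time reversal altogether. Assuming w.l.o.g.\ $\mu\in S_0$ (finite energy integral), let $u$ and $v$ be the $1$-potentials of $\mu$ relative to $\E$ and $\hat{\E}$, so that $\E_1(h,u)=\langle\mu,h\rangle=\E_1(v,h)$ for all $h\in\F$, whence $v=\hat{u}$ in the notation of Lemma \ref{01221111}. By Theorem \ref{T5.4.2FOT}, the $P$-PCAF $A$ with Revuz measure $\mu$ satisfies $A_t=-N^v_t+\int_0^t v(X_s)\,\di s$, which by the definition of $\hat{N}^u$ equals $-\hat{N}^u_t+\int_0^t u(X_s)\,\di s$; by Lemma \ref{01221111} and the dual form of Theorem \ref{T5.4.2FOT}, this same expression is, under $\hat{P}$, the PCAF $\hat{A}$ with Revuz measure $\mu$. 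Thus a single functional, built from Fukushima-decomposition objects that possess versions under both laws, realizes the PCAF with Revuz measure $\mu$ simultaneously for $X$ and for $\hat{X}$; this identification is exactly what the lemma asserts, and it is also what your route would ultimately be forced to construct before the reversal identity could even be formulated.
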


\begin{pf}
We suppose without loss of generality that $\mu\in S_0$. Let $u=\hat
{U}_1\mu$ and $v:=U_1\mu$ be the 1-potentials of $\mu$ with respect
to $\mathcal{E}$ and $\hat{\mathcal{E}}$, respectively. Let $A$ be
the PCAF with
respect to $(P_x,x\in E)$ and with Revuz measure $\mu$. For any $h$ in
$\mathcal{F}$, $\mathcal{E}_1(h,u)=\mathcal{E}_1(v,h),$ then with
the notation of Lemma \ref
{lema012111}, $v=\hat{u}$. It follows from Theorem \ref{T5.4.2FOT}
that $P_x$-a.e. for q.e. $x\in E$:
\begin{eqnarray*}
A_t&=&-N_t^v+\int_0^t
v(X_s)\,\mathrm{d}s=-N_t^v-\int
_0^t \bigl[u(X_s)-v(X_s)
\bigr]\,\mathrm{d} s+\int u(X_s)\,\mathrm{d}s
\nonumber
\\
&=&-\hat{N}^u+\int_0^tu(X_s)
\,\mathrm{d}s =\hat{A_t}.
\nonumber
\end{eqnarray*}
\upqed\end{pf}

The following lemma can be found in \cite{Fi2}, Lemma 3.21, for
symmetric diffusions.

\begin{lema}\label{lema22211}
Let $u$ be in $\mathcal{F}$. For any $t\leq T$ we have $P_m$-a.e. on
$\{T<\zeta
\}$:
\begin{eqnarray*}
{\hat{N}^u_t} \circ r_T&=&{
\hat{N}^u_T}-{\hat{N}^u_{T-t}}\qquad \mbox{and}
\nonumber
\\
{\hat{M}^u_t} \circ r_T&=&{
\hat{M}^u_T} \circ r_T-{
\hat{M}^u_{T-t}} \circ r_{T-t}.
\nonumber
\end{eqnarray*}
\end{lema}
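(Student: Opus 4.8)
The plan is to prove the first (zero--energy) identity $\hat N^u_t\circ r_T=\hat N^u_T-\hat N^u_{T-t}$ and then to obtain the second (martingale) identity by pure algebra, using the definition $\hat M^u_t=u(X_t)-u(X_0)-\hat N^u_t$ from Lemma \ref{01221111}. Throughout I would work on $\{T<\zeta\}$ and record the elementary action of the reversal on the coordinate process: for $0\le t<T$ one has $X_t\circ r_T=X_{(T-t)-}$ and $X_T\circ r_T=X_0$, while $X_0\circ r_T=X_{T-}$ (and the analogous formulas with $T$ replaced by $T-t$ govern $r_{T-t}$).

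For the first identity I would split $\hat N^u$ according to its definition, $\hat N^u_t=N^{\hat u}_t+\int_0^t(u-\hat u)(X_s)\di s$. The absolutely continuous term reverses pathwise: substituting $X_s\circ r_T=X_{(T-s)-}$ and changing variables $r=T-s$ yields $\int_0^t(u-\hat u)(X_{(T-s)-})\di s=\int_{T-t}^T(u-\hat u)(X_r)\di r$ (left limits are irrelevant under the Lebesgue integral since $X$ jumps only countably often), which is exactly its increment between $T-t$ and $T$. Hence it suffices to prove $N^{\hat u}_t\circ r_T=N^{\hat u}_T-N^{\hat u}_{T-t}$, $P_m$-a.e. on $\{T<\zeta\}$. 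To reach $N^{\hat u}$ I would approximate $\hat u$ in the $\tilde{\E}_1$-norm by $1$-potentials $w_n=R_1 g_n$ with $g_n$ bounded; these lie in the domain of the generator, so their zero--energy part is again absolutely continuous, $N^{w_n}_t=\int_0^t(w_n-g_n)(X_s)\di s$, and therefore reverses pathwise exactly as above: $N^{w_n}_t\circ r_T=N^{w_n}_T-N^{w_n}_{T-t}$ for every $\omega$ with $T<\zeta$.

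The heart of the proof, and the step I expect to be the main obstacle, is passing to the limit $n\to\infty$ on both sides of this pathwise identity. On the right-hand side Lemma \ref{l5.1.2FOT} gives a subsequence along which $N^{w_n}\to N^{\hat u}$ uniformly on compacts $P_m$-a.e., so $N^{w_n}_T-N^{w_n}_{T-t}\to N^{\hat u}_T-N^{\hat u}_{T-t}$. On the left-hand side I must transfer this convergence through $r_T$; since $\{\sup_{t\le T}|N^{w_n}_t-N^{\hat u}_t|>\varepsilon\}$ is $\F_T$-measurable, Lemma \ref{lema012111} rewrites its $r_T$-preimage probability under $P_m$ as the corresponding probability under $\hat P_m$, and the task becomes showing $N^{w_n}\to N^{\hat u}$ uniformly on $[0,T]$ in $\hat P_m$-probability. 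Writing $N^{v}_t=v(X_t)-v(X_0)-M^v_t$, the $v(X)$-terms converge quasi-uniformly (the capacity and the exceptional sets of $X$ and $\hat X$ coincide), so the real difficulty is to control the \emph{forward} martingale additive functional $M^{w_n-\hat u}$ under the \emph{dual} law $\hat P_m$, where it is no longer a martingale. Here I would use that $\langle M^{w_n-\hat u}\rangle$ is a PCAF common to both laws (Lemma \ref{minuit}) whose Revuz mass is controlled by $\tilde{\E}^{(c)}(w_n-\hat u,w_n-\hat u)+\tilde{\E}^{(j)}(w_n-\hat u,w_n-\hat u)\to0$, so that $\langle M^{w_n-\hat u}\rangle_T\to0$ by Lemma \ref{convCAF}, and then a forward--backward (Lyons--Zheng type) maximal estimate to pass from this quadratic-variation control to control of $\sup_{t\le T}|M^{w_n-\hat u}_t|$ under $\hat P_m$; this is precisely the two-sided estimate that makes the symmetric form $\tilde{\E}_1$, shared by $\E$ and $\hat{\E}$, do the work and identifies $N^{\hat u}$ as the common limit.

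Finally I would deduce the martingale identity from the first one. Using $\hat M^u=u(X)-u(X_0)-\hat N^u$ together with the endpoint rules and the first identity, for $0\le t<T$
\begin{equation}
\hat M^u_t\circ r_T=u(X_{(T-t)-})-u(X_{T-})-\hat N^u_T+\hat N^u_{T-t}.\nonumber
\end{equation}
Computing the two terms on the right of the claimed identity in the same way gives $\hat M^u_T\circ r_T=u(X_0)-u(X_{T-})-\hat N^u_T$ and $\hat M^u_{T-t}\circ r_{T-t}=u(X_0)-u(X_{(T-t)-})-\hat N^u_{T-t}$ (using $\hat N^u_T\circ r_T=\hat N^u_T$ and $\hat N^u_{T-t}\circ r_{T-t}=\hat N^u_{T-t}$, the first identity at its terminal time), and their difference equals the displayed expression. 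Hence $\hat M^u_t\circ r_T=\hat M^u_T\circ r_T-\hat M^u_{T-t}\circ r_{T-t}$, and the continuity of $\hat N^u$ upgrades the a.e.-for-each-$t$ statements to hold simultaneously for all $t\le T$.
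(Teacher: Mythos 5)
Your overall skeleton (reduce to an absolutely continuous approximation, reverse pathwise, transfer through $r_T$ via Lemma \ref{lema012111}, then get the martingale identity by algebra) is the same as the paper's, and your closing algebra for $\hat M^u$ is exactly the paper's ``easy computations''. But the limiting step you yourself flag as the main obstacle contains a genuine gap, and the tool you propose for it does not exist at this stage. You approximate $\hat u$ by \emph{forward} potentials $w_n=R_1g_n$, so your pathwise functionals $N^{w_n}_t=\int_0^t(w_n-g_n)(X_s)\di s$ are zero-energy parts only under $P$; under $\hat P_m$ they are not the dual zero-energy parts of anything (that would require $w_n=\hat R_1 g_n$), so no dual analogue of Lemma \ref{l5.1.2FOT} identifies their limit. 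Your workaround --- controlling $\sup_{t\le T}|M^{w_n-\hat u}_t|$ under $\hat P_m$ from the fact that $\langle M^{w_n-\hat u}\rangle_T\to 0$ --- is not justified: under $\hat P_m$ the process $M^{w_n-\hat u}$ is not a (local) martingale, so Doob/BDG are unavailable, and $\langle M^{w_n-\hat u}\rangle$ is merely some PCAF, not its bracket under that law. The ``forward--backward (Lyons--Zheng type) maximal estimate'' you invoke is precisely what cannot be assumed here: in this paper the Lyons--Zheng representation (\ref{012511}) is \emph{deduced from} Lemma \ref{lema22211}, so using it is circular. Note also that for non-symmetric forms $P_m$ and $\hat P_m$ can be mutually singular on $\F_T$, so $P_m$-a.e.\ identities between your fixed versions (e.g.\ $N^{v}=v(X)-v(X_0)-M^{v}$) cannot simply be read under $\hat P_m$.

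The paper's proof closes exactly this hole by choosing the approximation adapted to the \emph{dual} resolvent: with $\hat f_n:=n(u-n\hat R_{n+1}u)$ as in Lemma \ref{01221111}, the pathwise functionals $A^n_t:=\int_0^t[u(X_s)-\hat f_n(X_s)]\di s$ converge to $\hat N^u$ under \emph{both} laws --- under $\hat P_m$ because this is the standard resolvent approximation of the dual zero-energy part (that is the content of Lemma \ref{01221111}), and under $P_m$ because $R_1\hat f_n\to\hat u$ in $\tilde{\E}_1$, so $N^{R_1\hat f_n}_t=\int_0^t(R_1\hat f_n-\hat f_n)(X_s)\di s\to N^{\hat u}_t$ by Lemma \ref{l5.1.2FOT} and hence $A^n_t\to N^{\hat u}_t+\int_0^t(u-\hat u)(X_s)\di s=\hat N^u_t$. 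With this double identification no control of a forward MAF under the dual law is ever needed: one transfers the $\hat P_m$-a.e.\ uniform convergence through $r_T$ (Lemma \ref{lema012111}), reverses each $A^n$ pathwise into an increment $A^n_T-A^n_{T-t}$, and identifies the limit of the right-hand side using the $P_m$-a.e.\ convergence. If you replace your $w_n=R_1g_n$ by the paper's $\hat f_n$, the rest of your argument (including the final algebra for $\hat M^u$) goes through as written.
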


\begin{pf}
Define $\hat{u}$ and $(\hat{f_n})$ as in Lemma \ref{01221111}. $\hat
{P}_m$-a.e. and by taking subsequences if necessary we have: $\hat
{N}_t^u=\lim_{n\rightarrow\infty}\int_0^t[u(X_s)-\hat
{f}_n(X_s)]\,\mathrm{d}s$, thus in view of Lemma \ref{lema012111}, we have
$P_m$-a.e. on $\{T<\zeta\}$:
\begin{eqnarray*}
\hat{N}_t^u\circ r_T&=&\lim_{n\rightarrow\infty}
\int_0^t\bigl[u(X_s)-
\hat{f}_n(X_s)\bigr]\,\mathrm{d}s\circ r_T
\nonumber
\\
&=&\lim_{n\rightarrow\infty}\int_0^T
\bigl[u(X_s)-\hat{f}_n(X_s)\bigr]\,\mathrm{d}
s-\lim_{n\rightarrow\infty}\int_0^{T-t}
\bigl[u(X_s)-\hat {f}_n(X_s)\bigr]\,\mathrm{d}
s
\nonumber
\\
&=&\hat{N}^u_T-
\hat{N}^u_{T-t}.
\nonumber
\end{eqnarray*}
The second equality can be shown with easy computations using the first one.
\end{pf}

\begin{nota}\label{nota29}
The first equality in Lema \ref{lema22211} is in fact true for $N^u$
and therefore for the elements in $\mathcal{N}_{f\mbox{-}\mathit{loc}}$, in particular
for any PCAF.
\end{nota}

Similarly to \cite{CFKZ}, the proof of Theorem \ref
{integralestocastica} is based in an extension of the Lyons and Zheng
decomposition \cite{LZ}, that is, in a representation of $N^u$ using
forward and backward MAF. We recall that for $u$ in $\mathcal{F}$,
$u^*$ was
defined as the unique element of $\mathcal{F}$ satisfying~(\ref{eq2}).

\begin{lema}
Let $u$ be in $\mathcal{F}$ and $T$ in $\mathbb{R}_+$. Set $v:=u^*$.
Then we have
$\mathbf{P}_m$-a.e. on $\{T<\zeta\}$:
\begin{eqnarray}\label{012511}
N^u_t&=&-\frac{1}{2}\bigl(M^v_t+
\hat{M}^v_t\circ r_t\bigr)+\frac
{1}{2}
\bigl(v(X_t)-v(X_{t-})\bigr)
\nonumber\\[-8pt]\\[-8pt]
&&{}+\int_0^t
\bigl[u(X_s)-v(X_s)\bigr]\,\mathrm{d}s,\qquad  t\leq T.
\nonumber
\end{eqnarray}
\end{lema}

\begin{pf}
In view of Lemma \ref{lema22211}, the right-hand side of (\ref
{012511}) coincides $\mathbf{P}_m$-a.e. on $\{T<\zeta\}$ with~$A$, where
for all $t\leq T$, $A_t:=\frac{1}{2}(N_t^v+\hat{N}_t^v)+\int_0^t[u(X_s)-v(X_s)]\,\mathrm{d}s$. It follows from (\ref{022522}) that
$\langle \Theta(A),h\rangle =-\tilde{\mathcal{E}}(v,h)+(u,h)-(v,h)=-\mathcal
{E}(u,h)$, for all
$h\in\mathcal{F}$. Now, \ref{012511} is consequence of Lemma \ref{T2.2N}.
\end{pf}

\begin{lema}\label{lemavqz}
Let $(N^{\ell})_{\ell\in\mathbb{N}}$ be a sequence of elements of
$\in\mathcal{N}_{c,f\mbox{-}\mathit{loc}}$ and let $(\Pi_n)$ be a sequence of partitions
tending to the identity. Then there exists a subsequence $(\Pi_{n_j})$
of $(\Pi_n)$ such that $\mathbf{P}_x$-a.s. for $m$-a.e. $x$ in E we have:
For all $\ell\in\mathbb{N}$,
\[
\sum_{k=0}^{p_{n_j}-1}\bigl[N^{\ell}(t
\wedge t_{n_j,k+1})-N(t\wedge t_{n_j,k})\bigr]^2
\]
converges to zero as $n\rightarrow\infty$, uniformly in any compact
of $[0,\infty)$.
\end{lema}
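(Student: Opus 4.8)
The plan is to isolate the one genuinely probabilistic ingredient---the convergence to $0$ of the partition quadratic variation of a single $N^u$ at a fixed endpoint---and to obtain every other feature, namely uniformity on compacts, simultaneity in $\ell$, and the passage from convergence in probability to $\Pro_x$-a.s.\ for $m$-a.e.\ $x$, by soft arguments. First I would reduce to $u\in\F$. Since each $N^\ell\in\Nc_{c,f\mbox{-}loc}$, Theorem \ref{teore1.1walsh} together with a localization along a common nest $(G_m)$ obtained by combining the (countably many) nests of the $N^\ell$ lets me replace $N^\ell$, on $\{t<\tau_{G_m}\}$, by a process of the form $N^{u}_t-\int_0^t u(X_s)\di s$ with $u\in\F$. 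The absolutely continuous term is of bounded variation with continuous paths, so its partition sums of squared increments on $[0,T]$ are bounded by $\big(\max_k|\int_{t_{n,k}}^{t_{n,k+1}}u(X_s)\di s|\big)\int_0^T|u(X_s)|\di s$, which tends to $0$ a.s.\ by continuity; since $\bigcup_m\{T<\tau_{G_m}\}=\{T<\zeta\}$ up to null sets, it suffices to control the partition quadratic variation of $N^u$ for $u\in\F$.

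Next, for the uniformity on compacts, I would split $Q^\ell_n(t):=\sum_k\big(N^\ell(t\wedge t_{n,k+1})-N^\ell(t\wedge t_{n,k})\big)^2$ as $V^\ell_n(t)+R^\ell_n(t)$, where $V^\ell_n$ collects the intervals already completed before $t$ and $R^\ell_n(t)$ is the single partial last term $\big(N^\ell(t)-N^\ell(t_{n,m(t)})\big)^2$. The process $V^\ell_n$ is nondecreasing in $t$, so $\sup_{t\le T}V^\ell_n(t)=V^\ell_n(T)\le Q^\ell_n(T)$; and, for $n$ large enough that $t_{n,p_n}>T$, the term $R^\ell_n(t)\le\big(\sup\{|N^\ell(s)-N^\ell(s')|:0\le s'\le s\le T,\ s-s'\le\|\Pi_n\|\}\big)^2$ tends to $0$ uniformly in $t\le T$ a.s.\ by uniform continuity of the continuous functional $N^\ell$ on $[0,T]$. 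Hence $\sup_{t\le T}Q^\ell_n(t)\le Q^\ell_n(T)+o(1)$ a.s., so uniform convergence on $[0,T]$ follows once the endpoint values $Q^\ell_n(T)$ are shown to vanish.

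The core step is therefore to prove $Q^u_n(T)\to0$ in $\Pro_m$-probability for fixed $T$ and $u\in\F$. Here I would invoke the Lyons--Zheng representation (\ref{012511}): on $\{T<\zeta\}$, $N^u$ equals, up to the already-controlled bounded-variation term, $-\tfrac12(M^v+\hat M^v\circ r_T)$ with $v=u^*$, that is the sum of a forward martingale and, by Lemma \ref{lema22211} (which identifies $\hat M^v_\cdot\circ r_T$ as a martingale in the reversed filtration), a backward martingale. Writing $\Delta_k$ for the increment over $[t_{n,k},t_{n,k+1}]$ and expanding, $Q^u_n(T)=\tfrac14\big[\sum_k(\Delta_k M^v)^2+2\sum_k\Delta_k M^v\,\Delta_k(\hat M^v\circ r_T)+\sum_k\big(\Delta_k(\hat M^v\circ r_T)\big)^2\big]+o(1)$. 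The forward and backward quadratic sums each converge in probability to the common bracket, finite because finite energy makes $\langle M^v\rangle$ a finite PCAF, while the forward--backward covariation sum converges to the negative of that same bracket, so the three contributions cancel to $0$. This covariation convergence is the hard part: it is where the time reversal $r_T$, Lemma \ref{lema012111} relating $\Pro_m$ and $\hat P_m$, and the Lyons--Zheng orthogonality must be handled with care, following \cite{LZ} and \cite{CFKZ}.

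Finally I would assemble the upgrades. Enumerating the countable family of pairs $(\ell,T)$ with $T$ in a countable dense subset of $[0,\infty)$ and extracting a diagonal subsequence $(\Pi_{n_j})$, the in-probability convergence becomes $\Pro_m$-a.s.\ simultaneously for all such pairs. Combined with the monotonicity of $V^\ell_n$ and the a.s.\ modulus-of-continuity control of the second paragraph, $\Pro_m$-a.s.\ endpoint convergence over a dense set of $T$ upgrades to uniform convergence of $Q^\ell_n$ on every compact of $[0,\zeta)$, for every $\ell$ at once. A disintegration $\Pro_m=\int\Pro_x\,m(\di x)$ then turns $\Pro_m$-a.s.\ into $\Pro_x$-a.s.\ for $m$-a.e.\ $x$, as claimed. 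The principal obstacle remains the forward--backward covariation step; everything else is routine.
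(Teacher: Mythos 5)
There is a genuine gap, and it sits exactly where you yourself locate the ``principal obstacle.'' Your core step --- proving $Q^u_n(T)\to 0$ via the Lyons--Zheng representation (\ref{012511}) and a cancellation between the forward quadratic sum, the backward quadratic sum, and their cross covariation --- is the one step you do not prove, and it cannot simply be imported ``following \cite{LZ} and \cite{CFKZ}'': those computations are written for symmetric forms (the whole point of this paper is the non-symmetric case), the two brackets $[M^v]_T$ and $[\hat M^v\circ r_T]_T$ are not the same random variable so ``the common bracket'' needs justification, and any attempt to deduce the covariation limit from the identity $M^v_t+\hat M^v_t\circ r_t=-2N^u_t+(\mbox{jump and bounded-variation terms})$ is circular, since that route uses precisely the zero-quadratic-variation statement being proved. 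The paper never touches time reversal in this lemma; the representation (\ref{012511}) is reserved for the proof of Theorem \ref{integralestocastica}, where the integrator has zero quadratic variation but \emph{not} zero energy.

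What you missed is that the hypothesis $N^{\ell}\in\Nc_{c,f\mbox{-}loc}$ gives locally zero \emph{energy}, not merely zero quadratic variation, and this makes the endpoint estimate elementary. The paper's proof is: with $a_n(t):=\sup\{t_{n,k}:t_{n,k}<t\}$ and $0<g\le 1$, $g\in L^1(E,m)$, Chebyshev's inequality under $\Pro_{g.m}$ gives
$$\Pro_{g.m}\Bigl(\sup_{t\le K}\sum_{k}[N^{\ell}(t\wedge t_{n,k+1})-N^{\ell}(t\wedge t_{n,k})]^2>\eta\Bigr)\le \frac{2}{\eta}\Es_m\Bigl(\sum_{k}[N^{\ell}(K\wedge t_{n,k+1})-N^{\ell}(K\wedge t_{n,k})]^2\Bigr)+\Pro_{g.m}\Bigl(\sup_{t\le K}[N^{\ell}(t)-N^{\ell}(a_n(t))]^2>\frac{\eta}{2}\Bigr),$$
where the expectation term is $o(1)$ because additivity, the Markov property and sub-invariance of $m$ yield $\Es_m[(N^{\ell}_{t+s}-N^{\ell}_t)^2]\le\Es_m[(N^{\ell}_s)^2]=o(s)$ from $e(N^{\ell})=0$, and the last term is $o(1)$ by (5.2.14) of \cite{FOT}; a weighted Borel--Cantelli argument (weights $\ell^{-2}K^{-2}$, choosing $n_j$ so that the weighted sum at level $\eta=j^{-1}$ is at most $j^{-2}$) then produces one subsequence along which the convergence holds uniformly on every $[0,K]$, for all $\ell$ and $K$ simultaneously, $\Pro_{g.m}$-a.s., i.e.\ $\Pro_x$-a.s.\ for $m$-a.e.\ $x$. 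Your peripheral steps (splitting off the last partial increment, diagonalization over $(\ell,K)$, disintegration of $\Pro_m$) do match the paper's, but your opening reduction is also shaky: Theorem \ref{teore1.1walsh} is stated for $\Nc_{f\mbox{-}loc}$, and invoking it for elements of $\Nc_{c,f\mbox{-}loc}$ already presupposes that zero energy implies zero quadratic variation --- again uncomfortably close to the statement under proof. Working directly from the zero-energy property, as the paper does, avoids both problems and requires no representation theorem and no time reversal at all.
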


\begin{pf}
Let $g$ be a function belongs to $L^1(E,m)$ such that $0<g(x)\leq1$
for all $x\in E$. For any $t\in\mathbb{R}_+$ set $a_n(t):=\sup\{
t_{n,k}\dvt t_{n,k}<t\}$. For all $n,\ell,K\in\mathbb{N}$, $\eta>0$:
%
\begin{eqnarray}\label {012930}
&&\mathbf{P}_{g\cdot m} \Biggl(\sup_{t\leq K}\sum
_{k=0}^{p_n-1}\bigl[N^{\ell
}(t\wedge
t_{n,k+1})-N^{\ell}(t\wedge t_{n,k})\bigr]^2>
\eta \Biggr)\nonumber
\\
&&\quad \leq\frac{2}{\eta}
\mathbf{E}_m \Biggl(\sum_{k=0}^{p_n-1}
\bigl[N^{\ell
}(K\wedge t_{n,k+1})-N^{\ell}(K\wedge
t_{n,k})\bigr]^2 \Biggr)
\\
&&\qquad {}+\mathbf{P}_{g\cdot m} \biggl(\sup_{t\leq K}\bigl[N^{\ell}(t)-N^{\ell
}
\bigl(a_n(t)\bigr)\bigr]^2>\frac{\eta}{2} \biggr).
\nonumber
\end{eqnarray}
Since $N^{\ell}\in\mathcal{N}_c$, the last term in the above equation
converges to zero as $n\rightarrow\infty$. (See (5.2.14) in \cite
{FOT}.) For all $n,\ell,K\in\mathbb{N}$ and $\eta>0$ let $\alpha
(n,\ell
,K,\eta)$ be the left-hand side of (\ref{012930}). Then for all $\eta>0$:
\[
\beta(n,\eta):=\sum_{\ell,K\in\mathbb{N}}\frac{1}{\ell^2K^2}\alpha
(n,\ell,K,\eta)\rightarrow0\qquad \mbox{as }n\rightarrow\infty.
\]
For any $j\in\mathbb{N}$ take $n_j$ such that $\beta(n_j,j^{-1})\leq
j^{-2}$. Then for any $j,\ell,K\in\mathbb{N}$, $\alpha(n_j,\ell
,K,\break j^{-1})\leq\ell^2K^2j^{-2}$ therefore, it follows from
Borel--Cantelli that for all $\ell,K$, $\sum_{k=0}^{p_{n_j}-1}[N^{\ell
}(t\wedge t_{n_j,k+1})-N^{\ell}(t\wedge t_{n_j,k})]^2$ converges to
zero as $n\rightarrow\infty$ uniformly on $[0,K]$ $\mathbf{P}_{g\cdot m}$-a.s.
\end{pf}

\begin{pf*}{Proof of Theorem \ref{integralestocastica}}
As usually, the uniqueness in the theorem is the following sense: two
local AF $A$, $B$ are equivalent if $\mathbf{P}_x(A_t=B_t,t<\zeta)=1$ for
q.e. $x\in E$. Evidently if $I^1$ and $I^2$ are two local AF satisfying
the conclusion of the theorem then, $\mathbf{P}_x(I^1_t=I^2_t,t<\zeta)=1$
for $m$-a.e. $x\in E$. We can show that this hold for q.e. $x\in E$
using an argument of the proof of Proposition~4.6 in \cite{CFKZ}.

Now we shall proof that the stochastic integral $\int_0^t
f(X_s)\,\mathrm{d}
C_s$ of the precedent section satisfies the conclusion of theorem. Let
$\{u_n\}$, $\{f_n\}$ and $\{G_n\}$ be a sequence\vadjust{\goodbreak} of $\mathcal{F}$,
$\mathcal{F}_b$ and
$\Xi$, respectively, such that $C_t=C^n_t:=N_t^{u_n}-\int_0^t
u_n(X_s)\,\mathrm{d}s$ on $\llbracket0,\tau_{G_n}\llbracket$ $\mathbf
{P}_m$-a.e.
and $f=f_n$
q.e. on $G_n$ (Theorem~\ref{teore1.1walsh}). For each $n$ set
$v_n:=u_n^*$. In order to simplify the notation let $M^n$ be $M^{v_n}$
and in the same way define $\hat{N}^n$ and $\hat{M}^n$. For all
$t\leq T$ set $a_n(t)=\sup\{t_{n,k}\dvt t_{n,k}<t\}$ and set:
\begin{eqnarray*}
X_t^n&:=&\sum_{k=0}^{p_n-1}X(t_{n,k})1_{\{t_{n,k}<t\leq t_{n,k+1}\}
}
,\qquad Y_t^n:=\sum_{k=0}^{p_n-1}X(t_{n,k+1})1_{\{t_{n,k}\leq t<
t_{n,k+1}\}}
\quad \mbox{and }
\nonumber
\\[-2pt]
Z_T^n(t)&:=&Y^n_{T-t}\circ
r_T=\sum_{k=0}^{p_n-1}X(T-t_{n,k+1})1_{\{
T-t_{n,k+1}<t\leq T-t_{n,k}\}}.
\nonumber
\end{eqnarray*}

In view of (\ref{012511}) and Lemma \ref{lema22211}, we have for any
$\ell\in\mathbb{N}$:
%
\begin{eqnarray}\label{aproxsi}
&&\sum_{k=0}^{p_n-1}f_{\ell}
\bigl(X(t_{n,k})\bigr)\bigl[C^{\ell}(t_{n,k+1}\wedge
t)-C^{\ell}(t_{n,k}\wedge t)\bigr]\nonumber
\\[-2pt]
&&\quad =-\frac{1}{2}\int_0^t f_{\ell}
\bigl(X_s^n\bigr)\,\mathrm{d}M_s^{\ell}-
\frac
{1}{2}\int_t^T f_{\ell}
\bigl(Z_T^n(s)\bigr)\,\mathrm{d}\hat{M}^{\ell}_s
\circ r_T-\int_0^t f_{\ell}
\bigl(X_s^n\bigr)v_{\ell}(X_s)
\,\mathrm{d}s
\nonumber
\\[-2pt]
&&\qquad {}-\frac{1}{2}\sum_{k=0}^{p_n-1}
\bigl[f_{\ell}\bigl(X(t_{n,k+1})\bigr)-f_{\ell
}
\bigl(X(t_{n,k})\bigr)\bigr] \bigl[v_{\ell}\bigl(X(t_{n,k+1}
\wedge t)\bigr)-v_{\ell
}\bigl(X(t_{n,k}\wedge t)\bigr)\bigr]
\\[-2pt]
&&\qquad {}+\frac{1}{2}f_{\ell}\bigl(X\bigl(a_n(t)\bigr)\bigr)
\bigl(v_{\ell}(X_t)-v_{\ell
}(X_{t-})\bigr)
\nonumber
\\[-2pt]
&&\qquad {}-\frac{1}{2}\sum_{k=0}^{p_n-1}
\bigl[f_{\ell}\bigl(X(t_{n,k+1})\bigr)-f_{\ell
}
\bigl(X(t_{n,k})\bigr)\bigr] \bigl[\hat{N}^{\ell}
\bigl(X(t_{n,k+1}\wedge t)\bigr)-\hat{N}^{\ell
}
\bigl(X(t_{n,k}\wedge t)\bigr)\bigr].
\nonumber
\end{eqnarray}
In view of Lemmas \ref{lema012111} and \ref{lemavqz}, the right-hand
side of (\ref{aproxsi}) converges in $\mathbf{P}_{g\cdot m}$-measure on \mbox{$\{
T<\zeta\}$} to:
\begin{eqnarray}\label{expsi}
I^{\ell}_T(t)&:=&-\frac{1}{2}\int
_0^tf_{\ell}(X_s)\,\mathrm
{d}M_s^{\ell
}-\frac{1}{2}\int_t^T
f_{\ell}(X_s)\,\mathrm{d}\hat{M}_s^{\ell
}
\circ r_T-\int_0^t
f_{\ell}(X_s)v_{\ell}(X_s)\,\mathrm{d}s
\nonumber\\[-9pt]\\[-9pt]
&&{}-\frac{1}{2}\bigl[M^{f_{\ell}},M^{\ell}\bigr]+
\frac{1}{2}f_{\ell
}(t-) \bigl(v_{\ell}(X_t)-v_{\ell}(t-)
\bigr).\nonumber
\end{eqnarray}
Besides, if $u_{\ell}=R_1h$ for some $h$ in $L^2(E,m)$, the left-hand
side of (\ref{aproxsi}) converges in $\mathbf{P}_{g\cdot m}$-measure to
$\int_0^t f_{\ell}(X_s)\,\mathrm{d}C^{\ell}_s$. Therefore, for the general case,
by approximating $u_{\ell}$ for a suite $(R_1h_n)$ with respect to
$\tilde{\mathcal{E}}_1$, it follows thanks to Lemma \ref{l5.1.2FOT}
and Lemma
\ref{lema3.11} that $I^{\ell}_T(t)$ coincides with $\int_0^t f_{\ell
}(X_s)\,\mathrm{d}C^{\ell}_s$ $\mathbf{P}_{g\cdot m}$-a.e. on $\{T<\zeta\}$.

In order to prove the theorem with need to show that there exists a
subsequence of $(\Pi_n)$ such that $\mathbf{P}_{g\cdot m}$-a.e. we have:
For any
$\ell\in\mathbb{N}$, the first five terms in the right-hand side of
(\ref
{aproxsi}) converge to the corresponding terms of the right-hand side
of (\ref{expsi}) uniformly on any compact of $[0,\zeta)$ and the last
term in the right-hand side of (\ref{aproxsi}) converges to zero
uniformly on any compact of $[0,\zeta)$.

We must show only the existence of such subsequence of $(\Pi_n)$ for
the second term in the right-hand side of (\ref{aproxsi}). The
existence of such subsequence for the other terms can be shown using
standard results in the semimartingale theory and the arguments used to
show Lemma \ref{lemavqz}. (See, e.g., Chapter II in \cite{P}.)

For any $n,\ell\in\mathbb{N}$ and $\eta,T>0$ set:
\begin{eqnarray*}
\alpha(n,\ell,T,\eta)&:=&\mathbf{P}_m \biggl(\sup_{t\leq T}
\biggl\llvert \int_t^T\bigl(f_{\ell}
\bigl(Z_T^n(s)\bigr)-f_{\ell}(X_s)
\bigr)\,\mathrm{d}\hat{M}^{\ell
}_s\circ r_T \biggr
\rrvert >\eta;T<\zeta \biggr)
\nonumber
\\
&=&\hat{\mathbf{P}}_m \biggl(\sup_{t\leq T}\biggl\llvert \int
_t^T\bigl(f_{\ell
}\bigl(Z_T^n(s)
\bigr)-f_{\ell}(X_s)\bigr)\,\mathrm{d}\hat{M}^{\ell}_s
\biggr\rrvert >\eta ;T<\zeta \biggr).
\nonumber
\end{eqnarray*}
Using the Doob inequalities, we have:
\begin{eqnarray*}
\alpha(n,\ell,T,\eta)&\leq&\hat{\mathbf{P}}_m \biggl(\biggl\llvert
\int_0^T\bigl(f_{\ell}
\bigl(Z_T^n(s)\bigr)-f_{\ell}(X_s)
\bigr)\,\mathrm{d}\hat{M}^{\ell}_s \biggr\rrvert >
\frac{\eta}{2};T<\zeta \biggr)
\nonumber
\\
&&{}+\hat{\mathbf{P}}_m \biggl(\sup_{t\leq T}\biggl\llvert \int
_0^t\bigl(f_{\ell
}\bigl(Z_T^n(s)
\bigr)-f_{\ell}(X_s)\bigr)\,\mathrm{d}\hat{M}^{\ell}_s
\biggr\rrvert >\frac
{\eta
}{2};T<\zeta \biggr)
\nonumber
\\
&\leq&\frac{4}{\eta}\hat{\mathbf{E}}_m \biggl(\int
_0^T\bigl(f_{\ell
}\bigl(Z_T^n(s)
\bigr)-f_{\ell}(X_s)\bigr)^2\,\mathrm{d}\bigl
\langle\hat{M}^{\ell}\bigr\rangle_s;T<\zeta \biggr)
\nonumber
\\
&\leq&\frac{4}{\eta}\mathbf{E}_m \biggl(\int
_0^T\bigl(f_{\ell
}\bigl(Z_T^n(s)
\bigr)-f_{\ell}(X_s)\bigr)^2\,\mathrm{d}\bigl
\langle\hat{M}^{\ell}\bigr\rangle_s\circ r_T;T<
\zeta \biggr).
\nonumber
\end{eqnarray*}
In view of Remark \ref{nota29}, $\mathbf{P}_m$-a.e. on $\{T<\zeta\}$:
\begin{eqnarray*}
\int_0^T\bigl(f_{\ell}
\bigl(Z_T^n(s)\bigr)-f_{\ell}(X_s)
\bigr)^2\,\mathrm{d}\bigl\langle\hat {M}^{\ell}\bigr
\rangle_s\circ r_T&=&-\int_0^T
\bigl(f_{\ell
}\bigl(Y^n_{T-s}\bigr)-f_{\ell}(X_{T-s})
\bigr)^2\,\mathrm{d}\bigl\langle\hat{M}^{\ell
}\bigr
\rangle_{T-s}
\nonumber
\\
&=&\int_0^T\bigl(f_{\ell}
\bigl(Y^n_{s}\bigr)-f_{\ell}(X_{s})
\bigr)^2\,\mathrm{d}\bigl\langle \hat {M}^{\ell}\bigr
\rangle_{s}
\nonumber
\\
&\leq&\mathrm{e}^T \int_0^{\infty}\mathrm{e}^{-s}
\bigl(f_{\ell}\bigl(Y^n_{s}\bigr)-f_{\ell
}(X_{s})
\bigr)^2\,\mathrm{d}\bigl\langle\hat{M}^{\ell}\bigr
\rangle_{s}.
\nonumber
\end{eqnarray*}
Let $\mu$ be the Revuz measure of $\langle\hat{M}^{\ell} \rangle$,
it follows from Lemma \ref{minuit} that $\mu(E)=2\hat{e}(\hat
{M}^{\ell})<\infty$ where $\hat{e}$ denote the energy with respect
to $(\hat{\mathbf{P}}_x,x\in E)$. Therefore, we have:
\begin{eqnarray*}
\mathbf{E}_m \biggl[ \int_0^{\infty}\mathrm{e}^{-s}
\,\mathrm{d}\bigl\langle\hat {M}^{\ell
}\bigr\rangle_{s}
\biggr]&=&\lim_{x\rightarrow\infty}\mathbf{E}_m \biggl[ \mathrm{e}^{-x}
\bigl\langle\hat{M}^{\ell}\bigr\rangle_{x}+\int
_0^x\mathrm{e}^{-s} \bigl\langle
\hat{M}^{\ell}\bigr\rangle_{s}\,\mathrm{d}s \biggr]
\nonumber
\\
&\leq&\lim_{x\rightarrow\infty}\mathrm{e}^{-x}x \mu(E)+\lim_{x\rightarrow
\infty}\int
_0^x\mathrm{e}^{-s}s\,\mathrm{d}s\mu(E)
\nonumber
\\
&=&\mu(E)<\infty.
\nonumber
\end{eqnarray*}
Since $f_{\ell}$ is quasi-continuous in the strict sense, $f_{\ell
}(Y_t^n)$ converges to $f_{\ell}(X_t)$ uniformly on $\mathbb{R}_+$,
$\mathbf{P}_m$-a.e. Therefore by dominated convergence, we have:
\[
\beta(n,\ell):=\mathbf{E}_m \biggl[\int_0^{\infty}\mathrm{e}^{-s}
\bigl(f_{\ell
}\bigl(Y^n_{s}\bigr)-f_{\ell}(X_{s})
\bigr)^2\,\mathrm{d}\bigl\langle\hat{M}^{\ell
}\bigr
\rangle_{s} \biggr]\rightarrow0\qquad \mbox{as }n\rightarrow\infty.
\]
For any $j\in\mathbb{N}$ let $n_j$ such that:
\[
\sum_{\ell=1}^{\infty}\frac{1}{\ell^2}\beta
\bigl(n_j,j^{-1}\bigr)\leq\frac
{1}{j^3}\qquad  \forall
n\in\mathbb{N}.
\]
Since for all $\eta,T>0$: $\alpha(n,l,T,\eta)\leq\frac{4}{\eta
}\mathrm{e}^T\beta(n,\ell)$, we have that $\alpha(n_j,\ell,T,j^{-1})\leq
\frac{4}{j^2}\mathrm{e}^T$ $\forall T>0$. It follows from Borel--Cantelli lemma
that for any $T,\ell$: $\mathbf{P}_m(\Omega\setminus\Omega_{T,\ell
})=0$ where:
\begin{eqnarray}
\Omega_{T,\ell}&:=&\{\zeta\leq T\}
\nonumber
\\
&&{} \cup\biggl\{ \int_0^t\bigl(f_{\ell}
\bigl(Z_T^{n_j}(s)\bigr)-f_{\ell}(X_s)
\bigr)\,\mathrm {d}\hat {M}^{\ell}_s\rightarrow0\mbox{
uniformly on any compact of }[0,T];T<\zeta \biggr\},
\nonumber
\end{eqnarray}
then $\mathbf{P}_m(\Omega\setminus\Omega^*)=0$ where $\Omega^*=\bigcap_{T\in Q_+,\ell\in\mathbb{N}}\Omega_{T,\ell}$. It is easy to show
that for
$\omega\in\Omega^*$, $\int_0^t(f_{\ell}(Z_T^{n_j}(s))-f_{\ell
}(X_s))\,\mathrm{d}\hat{M}^{\ell}_s$ converges to zero uniformly on any
compact of $[0,\zeta(\omega))$.
\end{pf*}

\begin{ejem}
In this example, we show that the stochastic integral constructed
by Chen \textit{et al.} \cite{CFKZ} for symmetric Dirichlet forms can be
defined in the sense of Definition \ref{intsto}. Moreover, both
definitions coincide $\mathbf{P}_m$-a.e. $\llbracket0,\zeta
\llbracket$. We use the
notations and definitions of \cite{CFKZ}, thus $\Lambda$ is a linear
operator that maps some class of local MAF's into even local CAF's
admitting $m$-null set. Let $M$ be a locally square-integrable MAF on
$\llbracket0,\zeta\llbracket$ that belongs to the domain of $\Lambda
$. We see
from the proof of \cite{CFKZ}, Theorem 3.7 and Lemma 3.2, that there
exists a nest $\{F_k\}$ of closed sets such that $\mathbf{P}_m$-a.e.
on $\llbracket
0,\tau_{F_k}\llbracket$:
%
\begin{equation}\label{paraeje}
\Lambda(M)=\Lambda\bigl(M^k\bigr)+A_t^k+L_t^k,
\end{equation}
where $M^k\in\M$, $A$ is a CAF of bounded variation and $L^k\in
(\mathcal{M}_{\mathit{loc}})^{\llbracket0,\zeta\llbracket}$. With a
refinement argument
used in the proof of \cite{CFKZ}, Lemma 4.6, one checks that
$\Lambda(M)$ is a local CAF of $X$. Denote by $\mathfrak{E}$ the set
of CAF of $X$ of finite energy. In view of \cite{CFKZ}, Proposition
2.8, the right-hand side of (\ref{paraeje}) belongs to $\mathfrak
{E}_{f\mbox{-}\mathit{loc}}$, hence $\Lambda(M)$ belongs to $(\mathfrak
{E}_{f\mbox{-}\mathit{loc}})_{f\mbox{-}\mathit{loc}}=\mathfrak{E}_{f\mbox{-}\mathit{loc}}$.

By \cite{CFKZ}, Theorem 3.7, $\Lambda(M)$ is of zero quadratic
variation in the sense of Definition \ref{defafzqv}. Then $\Lambda(M)$
belongs to $\mathcal{N}_{f\mbox{-}\mathit{loc} }$ and therefore the integral
$f*\Lambda(M)$ is well defined for any $f\in\mathcal{F}_{\mathit{loc}}$.

The stochastic integral defined in \cite{CFKZ} can be approximate in
some sense by Riemann sums. (See \cite{CFKZ}, Theorem 4.4.)
Consequently, thanks to Theorem \ref{integralestocastica} the
integrals $f*\Lambda(M)$ given by \cite{CFKZ} and Definition \ref
{intsto} both coincide $\mathbf{P}_m$-a.e. on $\llbracket0,\zeta
\llbracket$ for any
$f\in\mathcal{F}_b$ and therefore for any
$f\in\mathcal{F}_{\mathit{loc}}$.
\end{ejem}

\section{\texorpdfstring{Proof of Propositions \protect\ref{itoLevy} and \protect\ref{ito}}
{Proof of Propositions 1.2 and 1.3}}\label{prueba2}

\begin{pf*}{Proof of Proposition \ref{itoLevy}}
From the proof of Lemma 1.1 in \cite{W2}, there exist sequences
$(u_n)$, $(g_n)$ in $\mathcal{F}$ and nest of nearly Borel finely open sets
$(G_n)$ and $(\mathcal{G}_n)$ such that for any $n$: $u(x)=u_n(x)$
q.e. on
$G_n$, $G_n\subset\mathcal{G}_n$, $g_n(x)=1$ q.e. on $G_n$,
$g_n(x)=0$ q.e. on
$E\setminus\mathcal{G}_n$ and $\|g_n\|_{\infty}\leq1$. Moreover, there
exists a sequence of positive numbers $(\varepsilon_n)$ converging to
$0$ that the following limit define and element in $\M_{f\mbox
{-}\mathit{loc}}$, where the convergence is uniformly on any compact of
$[0,\zeta)$ $\mathbf{P}_x$-a.e. for q.e. $x\in E$.
\begin{eqnarray*}
M_t^{bj}&:=&\lim_{n\rightarrow\infty}\sum
_{s\leq
t}\bigl[u(X_s)-u(X_{s-})
\bigr]1_{\{\varepsilon_n <|\Delta u(X_s)|< 1\}}
\nonumber
\\
&&{}-\int_0^t\int_E
1_{\{\varepsilon_n<|u(y)-u(X_s)|<1\}
}\bigl[u(y)-u(X_{s})\bigr]N(X_s,
\mathrm{d}y)\,\mathrm{d}H_s, \qquad  t<\zeta .
\nonumber
\end{eqnarray*}
Besides, it is known that $M_t^c:= M^{u_n,c}_t \mbox{ if }t<\tau_{\mathcal{G}
_n}$ define a local CAF in $\M_{f\mbox{-}\mathit{loc}}$. Then set
$W^u=M^{bj}-M^c$. We shall proof that $C^u\in\mathcal{N}_{c,f\mbox
{-}\mathit{loc}}$, where:
\[
C^u_t:=u(X_t)-u(X_0)-\sum
_{s\leq t}\bigl[u(X_s)-u(X_{s-})
\bigr]1_{\{|\Delta
u(X_s)|\geq1\}}-W^u, \qquad t<\zeta.
\nonumber
\]
For any $\ell\in\mathbb{N}$, by taking a subsequence of
$(\varepsilon_n)$,
if necessary, we have that $\mathbf{P}_x$-a.e. for q.e. $x\in E$. For all
$t<\tau_{G_{\ell}}$,
\begin{eqnarray*}
&&C^u_t-N_t^{u_\ell}-\int
_0^t u_{\ell}(X_s)N(X_s,
\partial)\,\mathrm{d} H_s
\nonumber
\\
&&\qquad {}+\int_0^t\int_{E}1_{\{|u_{\ell}(X_s)-u_{\ell}(y)|\geq1\}
}
\bigl[u_{\ell}(y)-u_{\ell}(X_s)\bigr]N(X_s,
\mathrm{d}y)\,\mathrm {d}H_s
\nonumber
\\
&&\quad =\lim_{n\rightarrow\infty} \biggl( \int_0^t\int
_E 1_{\{
\varepsilon_n<|u(y)-u(X_s)|<1\}}\bigl[u(y)-u(X_{s})
\bigr]N(X_s,\mathrm {d}y)\,\mathrm{d}H_s
\nonumber
\\
&&\hspace*{46pt}{} - \int_0^t\int
_E 1_{\{\varepsilon_n<|u_{\ell
}(y)-u_{\ell}(X_s)|<1\}}\bigl[u_{\ell}(y)-u_{\ell}(X_{s})
\bigr]N(X_s,\mathrm {d}y)\,\mathrm{d} H_s \biggr)
\nonumber
\\
&&\quad =\lim_{n\rightarrow\infty} \biggl( \int_0^t\int
_Eg_{\ell}(X_s) 1_{\{\varepsilon_n<|u(y)-u(X_s)|<1\}}
\bigl[u(y)-u(X_{s})\bigr]N(X_s,\mathrm {d}y)\,\mathrm{d}
H_s
\nonumber
\\
&&\hspace*{46pt}{} - \int_0^t\int
_E g_{\ell}(X_s)1_{\{\varepsilon
_n<|u_{\ell}(y)-u_{\ell}(X_s)|<1\}}
\bigl[u_{\ell}(y)-u_{\ell
}(X_{s})\bigr]N(X_s,
\mathrm{d}y)\,\mathrm{d}H_s \biggr)
\nonumber
\\
&&\quad =-\int_0^t\int_E1_{\{|u(y)-u(X_s)|<1\}}
\bigl[g_{\ell}(y)-g_{\ell
}(X_s)\bigr]
\bigl[u(y)-u(X_{s})\bigr]N(X_s,\mathrm{d}y)
\,\mathrm{d}H_s
\nonumber
\\
&&\qquad {} +\int_0^t\int
_E 1_{\{|u_{\ell}(y)-u_{\ell}(X_s)|<1\}
}\bigl[g_{\ell}(y)-g_{\ell}(X_s)
\bigr] \bigl[u_{\ell}(y)-u_{\ell
}(X_{s})
\bigr]N(X_s,\mathrm{d} y)\,\mathrm{d}H_s
\nonumber
\end{eqnarray*}
and the last term belongs to $\mathcal{N}_{f\mbox{-}\mathit{loc}}$, in fact, for
$t<\tau_{G_{\ell}}$:
\begin{eqnarray*}
&&\int_0^t\int_E1_{\{|u(y)-u(X_s)|<1\}}\big|g_{\ell}(y)-g_{\ell
}(X_s)\big|\big|u(y)-u(X_{s})\big|N(X_s,
\mathrm{d}y)\,\mathrm{d}H_s
\nonumber
\\
&&\quad =\int_0^t\int_E1_{\{|u(y)-u(X_s)|<1\}}g_{\ell}(X_s)\big|g_{\ell
}(y)-g_{\ell}(X_s)\big|
\big|u(y)-u(X_{s})\big|N(X_s,\mathrm{d}y)\,\mathrm
{d}H_s
\nonumber
\\
&&\quad \leq \int_0^t\int_E
\bigl[g_{\ell}(X_s)-g_{\ell}(y)\bigr]^2
N(X_s,\mathrm{d} y)\,\mathrm{d}H_s
\nonumber
\\
&&\qquad {}+\int_0^t\int_Eg_{\ell}(y)\big|g_{\ell}(y)-g_{\ell}(X_s)\big|\big|u_{\ell
}(y)-u_{\ell}(X_{s})\big|N(X_s,
\mathrm{d}y)\,\mathrm{d}H_s
\nonumber
\\
&&\quad <\infty.
\nonumber
\end{eqnarray*}
Therefore, $C^u\in(\mathcal{N}_{f\mbox{-}\mathit{loc}})_{f\mbox
{-}\mathit{loc}}=\mathcal{N}_{f\mbox{-}\mathit{loc}}$.
\end{pf*}

\begin{pf*}{Proof of Proposition \ref{ito}}
Thanks to Theorem \ref{integralestocastica}, the It\^{o} formula can
be proved up to $\zeta$ with the same argument used to prove the
generalized It\^{o} formula of \cite{CFKZ}. (Theorem 4.7. of \cite
{CFKZ}.) When $u\in\mathcal{F}$, all terms in the decomposition of Theorem
\ref{itoLevy} are finite AF. Moreover, $C^u_t=N^u_t-\int_0^t\int_{E}1_{\{|u(X_s)-u(y)|\geq1\}}[u(y)-u(X_s)]N(X_s,\mathrm{d}y)\,\mathrm
{d}H_s$, then
it follows from Remark \ref{nota101}(iii) that the stochastic
integrals $\int_0^t \frac{\partial\Phi}{\partial
x_i}(u(X_s))\,\mathrm{d}
C_s^u$ are finite AF. Besides $W^u$ belongs to $\M$ and therefore the
integrals $\int_0^t \frac{\partial\Phi}{\partial
x_i}(u(X_s))\,\mathrm{d}
W^u_s$ are also finite AF. Therefore, when $u\in\mathcal{F}$ all
terms present
in the It\^{o} formula are finite AF, then the It\^{o} formula can be
extended from $[0,\zeta)$ to $[0,\infty)$.
\end{pf*}

\section*{Acknowledgements}
This work was part of my Ph.D. thesis realized in the university of
Paris VI. I would therefore like to thank sincerely my Ph.D. advisor
Nathalie Eisenbaum for every helpful discussion that led to improvement
of the results in this paper. In addition, I would also like to thank
the Associated Editor for his/her suggestions.


%

\printhistory

\end{document}